\newtheorem{theorem}{Theorem}
\theoremstyle{plain}
\newtheorem{corollary}{Corollary}
\newtheorem{lemma}{Lemma}
\newtheorem{proposition}{Proposition}
\newtheorem{remark}{Remark}
\numberwithin{equation}{section}
\begin{document}
\title[Skew group algebras and invariants]{Skew group algebras, invariants\\
and Weyl Algebras}
\author{Roberto Mart\'{\i}nez-Villa}
\address[A. One and A. Two]{Centro de Ciencias Matem\'{a}ticas, UNAM\\
Morelia, Mich. M\'{e}xico}
\email[A. One]{mvilla@matmor.unam.mx}
\urladdr{http://www.matmor.unam.mx}
\thanks{}
\author{Jeronimo Mondrag\'{o}n}
\curraddr[A. Two]{Centro de Ciencias Matem\'{a}ticas, UNAM\\
Morelia, Mich. M\'{e}xico}
\email[A.~Two]{jeronimo@matmor.unam.mx}
\urladdr{http://www.matmor.unam.mx}
\thanks{}
\date{November 5, 2012}
\subjclass[2000]{Primary 05C38, 15A15; Secondary 05A15, 15A18}
\keywords{Group invariants, skew group algebras, Weyl algebras}
\dedicatory{}
\thanks{}

\begin{abstract}
The aim of this paper is two fold:

First to study finite groups $G$ of automorphisms of the homogenized Weyl
algebra $B_{n}$, the skew group algebra $B_{n}\ast G$, the ring of
invariants $B_{n}^{G}$, and the relations of these algebras with the Weyl
algebra $A_{n}$, with the skew group algebra $A_{n}\ast G$, and with the
ring of invariants $A_{n}^{G}$. Of particular interest is the case $n=1$.

In the on the other hand, we consider the invariant ring $\QTR{sl}{C}[X]^{G}$
of the polynomial ring $K[X]$ in $n$ generators, where $G$ is a finite
subgroup of $Gl(n,\QTR{sl}{C}$) such that any element in $G$ different from
the identity does not have one as an eigenvalue. We study the relations
between the category of finitely generated modules over $\QTR{sl}{C}[X]^{G}$
and the corresponding category over the skew group algebra $\QTR{sl}{C}%
[X]\ast G$. We obtain a generalization of known results for $n=2$ and $G$ a
finite subgroup of $Sl(2,$\textsl{C }$)$. In the last part of the paper we
extend the results for the polynomial algebra $C[X]$ to the homogenized Weyl
algebra $B_{n}$.
\end{abstract}

\maketitle

\section{Automorphism of the homogenized Weyl algebra.}

\bigskip In tis section we will assume the reader is familiar with basic
results on Weyl algebras as in $[Co]$ , For results on the homogenized Weyl
algebra we refer to [MMo]

Let $K$ be a field of zero characteristic. In this section we consider the
it homogenized Weyl algebra $B_{n}$ defined by generators and relations as:

$B_{n}=K<X_{1},X_{2},...X_{n},Y_{1},Y_{2},...Y_{n},Z>/\{[X_{i},\delta
_{j}]-\partial _{ij}Z^{2}$, $[X_{i},X_{j}]$, $[Y_{i},Y_{j}]$, $[X_{i},Z`]$, $%
[Y_{i},Z`]\}$, with $K<X_{1},X_{2},...X_{n},Y_{1},Y_{2},...Y_{n},Z>$the free
algebra in $2n+1$ generators, $[u,v`]$ the commutator $uv-vu$ and $\delta
_{ij}$, Kronecker's delta.

\bigskip It is known $B_{n}$ has a Poincare-Birkoff basis and it is
quadratic, hence by [Li], [GH] it is Koszul. Let $B_{n}^{!}$ be its Yoneda
algebra [GM1],[GM2], $B_{n}^{!}=\underset{k\geq 0}{\oplus }Ext_{B}^{k}(K,K)$%
. The algebra $B_{n}^{!}$ has the same quiver as $B_{n}$ and relations
orthogonal with respect to the canonical bilinear form, it is easy to see
that $B_{n}^{!}$ has the following form: $%
B_{n}^{!}=K_{q}[X_{1},X_{2},...X_{n},Y_{1},Y_{2},...Y_{n},Z]/%
\{X_{i}^{2},Y_{j}^{2},\underset{i=0}{\overset{n}{\tsum }X_{i}Y_{i}+Z^{2}\}%
\text{, }}$\newline
where $K_{q}[X_{1},X_{2},...X_{n},Y_{1},Y_{2},...Y_{n},Z]$ denote the
quantum polynomial ring.

$K<X_{1},X_{2},...X_{n},Y_{1},Y_{2},...Y_{n},Z>/%
\{(X_{i},X_{j}),(Y_{i},Y_{j}),(X_{i},Z),(Y_{i},Z)\}$. Here $(u,v)$ denotes
the anti commutators $uv+vu$.

The polynomial algebra $C_{n}=K[X_{1},X_{2},...X_{n},Y_{1},Y_{2},...Y_{n},Z]$
is a Koszul algebra with Yoneda algebra $%
C_{n}^{!}=K_{q}[X_{1},X_{2},...X_{n},Y_{1},Y_{2},...Y_{n},Z]/%
\{X_{i}^{2},Y_{j}^{2}\}$. The Weyl algebra is defined by $%
A_{n}=K<X_{1},X_{2},...X_{n},Y_{1},Y_{2},...Y_{n},>/\{[X_{i},Y_{j}]-\partial
_{ij},[X_{i},X_{j}]`,[Y_{i},Y_{j}],[X_{i},Z`],[Y_{i},Z]`\}$.

Observe we obtain $C_{n}$ as a quotient of $B_{n}$ and $C_{n}^{!}$ is a sub
algebra of $B_{n}^{!}$.

\bigskip These algebras are related as follows: $B_{n}$ $/ZB_{n}$ $\cong
C_{n}$ and $B_{n}$ $/(Z-1)B_{n}$ $\cong A_{n}$

\bigskip The algebra $B_{n}^{!}$ is a free $C_{n}^{!}$-module of rank two.

In fact we have:

\begin{proposition}
There exists a $C_{n}^{!}$-module decomposition: $B_{n}^{!}=C_{n}^{!}\oplus
ZC_{n}^{!}$.
\end{proposition}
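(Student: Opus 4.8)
The plan is to realize $B_n^!$ as a rank-two extension of its subalgebra $C_n^!$ obtained by adjoining the single generator $Z$, and to read off the two structural facts that make this work directly from the presentation of $B_n^!$ given above. First I would record that $Z$ anticommutes with each $X_i$ and each $Y_j$: the relations $(X_i,Z)=X_iZ+ZX_i=0$ and $(Y_j,Z)=Y_jZ+ZY_j=0$ give $ZX_i=-X_iZ$ and $ZY_j=-Y_jZ$, so for any homogeneous $c\in C_n^!$ of degree $d$ one has $Zc=(-1)^d cZ$. In particular $ZC_n^!=C_n^!Z$, and since $c\cdot(Zc')=\pm Z(cc')\in ZC_n^!$, both $C_n^!$ and $ZC_n^!$ are left $C_n^!$-submodules of $B_n^!$. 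Second, the defining relation $\sum_i X_iY_i+Z^2=0$ says $Z^2=-\sum_i X_iY_i$, an element of the subalgebra $C_n^!$ generated by the $X_i$ and $Y_j$.

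Given these two facts, the spanning statement $B_n^!=C_n^!+ZC_n^!$ is a routine normalization. Any monomial in the generators can be rewritten, by moving every factor of $Z$ to the left at the cost of a sign, as $\pm Z^{m}w$ with $w$ a monomial in the $X_i,Y_j$; reducing $w$ to ordered square-free form places it in $C_n^!$, while $Z^{m}$ collapses under $Z^2=-\sum_i X_iY_i$: for $m$ even $Z^{m}=(-\sum_i X_iY_i)^{m/2}\in C_n^!$ and for $m$ odd $Z^{m}=Z(-\sum_i X_iY_i)^{(m-1)/2}\in ZC_n^!$. Hence every element of $B_n^!$ lies in $C_n^!+ZC_n^!$.

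The real content is the directness $C_n^!\cap ZC_n^!=0$, and here I would argue by dimension using Koszulity, which is already available. Since $B_n$ has a Poincar\'{e}--Birkhoff--Witt basis in the $2n+1$ generators its Hilbert series is $(1-t)^{-(2n+1)}$, and as $B_n$ is Koszul the numerical duality $H_{B_n}(t)\,H_{B_n^!}(-t)=1$ forces $H_{B_n^!}(t)=(1+t)^{2n+1}$; in particular $\dim_K B_n^!=2^{2n+1}$. On the other hand $C_n^!$ is the exterior-type algebra on the $2n$ anticommuting generators $X_i,Y_j$, so $\dim_K C_n^!=2^{2n}$, and the image $ZC_n^!$ of left multiplication by $Z$ has $\dim_K ZC_n^!\le 2^{2n}$. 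From the spanning step, $2^{2n+1}=\dim_K B_n^!=\dim_K(C_n^!+ZC_n^!)\le \dim_K C_n^!+\dim_K ZC_n^!\le 2^{2n}+2^{2n}=2^{2n+1}$, so equality holds throughout: $\dim_K ZC_n^!=2^{2n}$ and $\dim_K(C_n^!\cap ZC_n^!)=0$. This gives the $C_n^!$-module decomposition $B_n^!=C_n^!\oplus ZC_n^!$, and shows moreover that $w\mapsto Zw$ is injective, so that the summand $ZC_n^!$ is free of rank one and $B_n^!$ is free of rank two over $C_n^!$, as asserted before the statement.

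The step I expect to be the obstacle is precisely this directness. Spanning is bookkeeping, but a priori some nonzero element of $C_n^!$ could coincide with $Z$ times another, and nothing in the manipulation rules it out; what saves the argument is that the Hilbert series of $B_n^!$ is pinned down exactly by Koszul duality from the PBW basis of $B_n$, so that the surjection $C_n^!\oplus ZC_n^!\to B_n^!$ is forced to be an isomorphism on dimensional grounds. An alternative, if one prefers to avoid the dimension count, is to exhibit the explicit family $\{Z^{\varepsilon}X_I Y_J:\varepsilon\in\{0,1\},\ I,J\subseteq\{1,\dots,n\}\}$ and invoke the PBW basis dual to that of $B_n$ to see it is linearly independent; this family has exactly $2^{2n+1}$ elements and the $\varepsilon=0$ and $\varepsilon=1$ parts are bases of $C_n^!$ and $ZC_n^!$ respectively.
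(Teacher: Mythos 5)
Your argument is correct, and it is worth noting that the paper itself offers no proof at all: the proposition is asserted immediately after the remark that $B_{n}^{!}$ is a free $C_{n}^{!}$-module of rank two, with no justification. Your two-step scheme supplies exactly what is missing. The spanning half ($B_{n}^{!}=C_{n}^{!}+ZC_{n}^{!}$) is the routine normal-form computation you describe, using that $Z$ anticommutes with the degree-one generators and that $Z^{2}=-\sum_{i}X_{i}Y_{i}$ lies in the subalgebra generated by the $X_{i},Y_{j}$; and you correctly identify that the genuine content is the directness, which you settle by the Koszul numerical identity $H_{B_{n}}(t)H_{B_{n}^{!}}(-t)=1$ forcing $\dim_{K}B_{n}^{!}=2^{2n+1}$, so the surjection from $C_{n}^{!}\oplus ZC_{n}^{!}$ must be injective and multiplication by $Z$ must be injective on $C_{n}^{!}$ --- which also yields the rank-two freeness claimed just before the proposition. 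One caveat about notation rather than substance: the paper's displayed formula defines $C_{n}$ as a polynomial ring that nominally includes $Z$ and gives $C_{n}^{!}$ without the relation $Z^{2}$, which is internally inconsistent with the assertions $B_{n}/ZB_{n}\cong C_{n}$ and ``$C_{n}^{!}$ is a subalgebra of $B_{n}^{!}$''; your reading of $C_{n}^{!}$ as the exterior algebra on the $2n$ generators $X_{i},Y_{j}$, sitting inside $B_{n}^{!}$ as the subalgebra they generate, is the only one under which the statement is true, and your dimension count in fact proves as a byproduct that this subalgebra is the full $2^{2n}$-dimensional exterior algebra rather than a proper quotient of it.
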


We consider now a finite group $G$ of grade preserving automorphisms of $%
B_{n}$.

It was proved in [MMo $]$ that the center of $B_{n}$ is $K[Z]$ and any
automorphism $\sigma $ $\in G$ takes the center to the center, and since $%
\sigma $ is grade preserving $\sigma (Z)$ is an homogeneous element of
degree one, hence $\sigma (Z)=\lambda _{\sigma }Z$. We are assuming $G$ has
finite order $m$, it follows $\sigma ^{m}(Z)=Z=\lambda _{\sigma }^{m}Z$ ,
and $\lambda _{\sigma }$ is an $m$-th root of unity. If we denote by $\sqrt[m%
]{1}$ the group of $m$ roots of unity, there is a group homomorphism $\eta
:G\rightarrow \sqrt[m]{1}$ given by $\eta (\sigma )=\lambda _{\sigma }$ ,
since $\sqrt[m]{1}$ is a cyclic group the image of $\eta $ is cyclic and we
have a group extension: $1\rightarrow N\rightarrow G\rightarrow \QTR{sl}{Z}%
_{k}\rightarrow 0$ with $\QTR{sl}{Z}_{k}$ the cyclic group of order $k$ and $%
N$ is the subgroup of $G$ such that for every $\sigma $ in $G$, $\sigma
(Z)=Z $.

Since $B_{n}$ is generated in degree one, the action of $G$ on $B_{n}$ is
determined by the action on $M=(B_{n})_{1}=\overset{n}{\underset{i=1}{\oplus 
}}KX_{i}\oplus (\overset{n}{\underset{i=1}{\oplus }}KY_{i})\oplus KZ$.

To determine the structure of $G$ we need to look for automorphisms of $M$
which leave $KZ$ invariant and preserve the relations: $[X_{i},\delta _{j}]$=%
$\partial _{ij}Z^{2}$, $[X_{i},X_{j}]$= $[Y_{i},Y_{j}]$= 0 .

For any element $\sigma $of $G$ we have equations:

\begin{center}
$\sigma ($X$_{j})=\underset{i=1}{\overset{n}{\sum }}$A$_{2i-1,2j-1}$X$_{i}+%
\underset{i=1}{\overset{n}{\sum }}$A$_{2i,2j-1}$Y$_{i}+\mu _{j}Z$

$\sigma ($Y$_{k})=\underset{\ell =1}{\overset{n}{\sum }}$A$_{2\ell -1,2k}$X$%
_{\ell }+\underset{\ell =1}{\overset{n}{\sum }}$A$_{2\ell ,2k}$Y$_{\ell
}+\nu _{k}Z$

$\sigma (Z)=\lambda Z$.
\end{center}

We must have equalities:

$\sigma ($X$_{j}$X$_{k}-$X$_{k}$X$_{j})=0=\sigma ($X$_{j})\sigma ($X$%
_{k})-\sigma ($X$_{k})\sigma ($X$_{j})$.

Using the relations that define $B_{n}$, we obtain after cancellation the
following equation involving $2\times 2$ determinants:

\begin{center}
$\overset{n}{\underset{i=1}{\dsum }}\left\vert 
\begin{array}{cc}
A_{2i-1,2j-1} & A_{2i,2j-1} \\ 
A_{2i-1,2k-1} & A_{2i,2k-1}%
\end{array}%
\right\vert $(X$_{i}$Y$_{i}-$Y$_{i}$X$_{i}$)= 0.
\end{center}

Since for every $i$ we have X$_{i}$Y$_{i}-$Y$_{i}$X$_{i}$=Z$^{2}$, it
follows:

\begin{center}
$\overset{n}{\underset{i=1}{\dsum }}\left\vert 
\begin{array}{cc}
A_{2i-1,2j-1} & A_{2i,2j-1} \\ 
A_{2i-1,2k-1} & A_{2i,2k-1}%
\end{array}%
\right\vert $=0.
\end{center}

We may assume $j<k$. Using that a matrix and its transpose have the same
determinant we obtain the equivalent relations:

\begin{center}
1) $\overset{n}{\underset{i=1}{\dsum }}\left\vert 
\begin{array}{cc}
A_{2i-1,2j-1} & A_{2i-1,2k-1} \\ 
A_{2i,2j-1} & A_{2i,2k-1}%
\end{array}%
\right\vert $=0, for all $j$, $k$ with $j<k$.
\end{center}

\bigskip In a similar way, for we obtain from the relation:

\begin{center}
$\sigma ($Y$_{j}$Y$_{k}-$Y$_{k}$Y$_{j})=0=\sigma ($Y$_{j})\sigma ($Y$%
_{k})-\sigma ($Y$_{k})\sigma ($Y$_{j})$, the following equation:

2) $\overset{n}{\underset{i=1}{\dsum }}\left\vert 
\begin{array}{cc}
A_{2i-1,2j} & A_{2i-1,2k} \\ 
A_{2i,2j} & A_{2i,2k}%
\end{array}%
\right\vert $=0, for all $j$, $k$ with $j<k$.
\end{center}

From the relation:

\begin{center}
$\sigma ($X$_{j}$Y$_{j}-$Y$_{j}$X$_{j})=\sigma ($X$_{j})\sigma ($Y$%
_{j})-\sigma ($Y$_{j})\sigma ($X$_{j})=\sigma (Z^{2})=\lambda ^{2}Z^{2}$,
the following equation:

$\overset{n}{\underset{i=1}{\dsum }}\left\vert 
\begin{array}{cc}
A_{2i-1,2j-1} & A_{2i-1,2j} \\ 
A_{2i,2j-1} & A_{2i,2j}%
\end{array}%
\right\vert $(X$_{i}$Y$_{i}-$Y$_{i}$X$_{i}$)=$\lambda ^{2}$ Z$^{2}$.
\end{center}

Since for every $i$ we have X$_{i}$Y$_{i}-$Y$_{i}$X$_{i}$=Z$^{2}$, it
follows:

\begin{center}
3) $\overset{n}{\underset{i=1}{\dsum }}\left\vert 
\begin{array}{cc}
A_{2i-1,2j-1} & A_{2i-1,2j} \\ 
A_{2i,2j-1} & A_{2i,2j}%
\end{array}%
\right\vert =\lambda ^{2}$
\end{center}

With similar calculations, from the equation:

\begin{center}
$\sigma ($X$_{j}$Y$_{k}-$Y$_{k}$X$_{j})=0=\sigma ($X$_{j})\sigma ($Y$%
_{k})-\sigma ($Y$_{k})\sigma ($X$_{j})$, for $k\neq 0$.
\end{center}

we get the equation:

\begin{center}
$\overset{n}{\underset{i=1}{\dsum }}\left\vert 
\begin{array}{cc}
A_{2i-1,2j-1} & A_{2i-1,2k} \\ 
A_{2i,2j-1} & A_{2i,2k}%
\end{array}%
\right\vert ($X$_{i}$Y$_{i}-$Y$_{i}$X$_{i})=0.$
\end{center}

which implies:

\begin{center}
4) $\overset{n}{\underset{i=1}{\dsum }}\left\vert 
\begin{array}{cc}
A_{2i-1,2j-1} & A_{2i-1,2k} \\ 
A_{2i,2j-1} & A_{2i,2k}%
\end{array}%
\right\vert =0$ , for $i\neq k.$
\end{center}

We have proved the following:

\begin{theorem}
Let $B_{n}$ be the homogenized Weyl algebra in n+1 generators. Then an
automorphism $\sigma $ of $M=\overset{n}{\underset{i=1}{\oplus }}%
KX_{i}\oplus (\overset{n}{\underset{i=1}{\oplus }}KY_{i})\oplus KZ$, with
matrix in block form:$\left[ 
\begin{array}{cc}
A & 0 \\ 
\rho & \lambda%
\end{array}%
,\right] $ where $\rho $ is the vector: $\rho =(\mu _{1},\mu _{2},...\mu
_{n},\nu _{1},\nu _{2}...\nu _{n})$ and $\lambda \neq 0$, extends to an
automorphism of $B_{n}$ if and only if $A$ satisfies the following equations:

1) $\overset{n}{\underset{i=1}{\dsum }}\left\vert 
\begin{array}{cc}
A_{2i-1,2j-1} & A_{2i-1,2k-1} \\ 
A_{2i,2j-1} & A_{2i,2k-1}%
\end{array}%
\right\vert $=0, for all $j$, $k$ with $j<k,$

2) $\overset{n}{\underset{i=1}{\dsum }}\left\vert 
\begin{array}{cc}
A_{2i-1,2j} & A_{2i-1,2k} \\ 
A_{2i,2j} & A_{2i,2k}%
\end{array}%
\right\vert $=0, for all $j$, $k$ with $j<k$.

3) $\overset{n}{\underset{i=1}{\dsum }}\left\vert 
\begin{array}{cc}
A_{2i-1,2j-1} & A_{2i-1,2j} \\ 
A_{2i,2j-1} & A_{2i,2j}%
\end{array}%
\right\vert =\lambda ^{2},$4) $\overset{n}{\underset{i=1}{\dsum }}\left\vert 
\begin{array}{cc}
A_{2i-1,2j-1} & A_{2i-1,2k} \\ 
A_{2i,2j-1} & A_{2i,2k}%
\end{array}%
\right\vert =0$ , for $i\neq k.$
\end{theorem}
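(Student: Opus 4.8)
The statement is an equivalence, and the forward (only if) direction is essentially the computation carried out just before the statement, so my plan is simply to record it: if $\sigma$ extends to an algebra automorphism of $B_n$, it must respect every defining relation. Applying $\sigma$ to $[X_j,X_k]$, $[Y_j,Y_k]$, $[X_j,Y_j]-Z^2$ and $[X_j,Y_k]$ (for $j\neq k$), expanding through the formulas for $\sigma(X_j)$, $\sigma(Y_k)$, $\sigma(Z)$, and reducing every $X_iY_i-Y_iX_i$ to $Z^2$ and $\sigma(Z^2)$ to $\lambda^2Z^2$, collapses each relation to a single scalar identity in the coefficient of $Z^2$; these identities are exactly equations 1)--4).

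For the converse I would invoke the presentation of $B_n$ as the free algebra on the generators modulo the two-sided ideal $I$ generated by the listed quadratic relations. A linear endomorphism of $M=(B_n)_1$ extends uniquely and multiplicatively to an algebra endomorphism of $B_n$ precisely when it carries each generator of $I$ into $I$; since the relations are homogeneous of degree two and $\sigma$ is degree preserving on generators, it suffices to check this on the listed relations. Running the earlier computation in reverse, equations 1) and 2) make $\sigma[X_j,X_k]$ and $\sigma[Y_j,Y_k]$ vanish, equation 3) makes $\sigma([X_j,Y_j]-Z^2)$ vanish once $\sigma(Z^2)=\lambda^2Z^2$ is taken into account, and equation 4) handles $\sigma[X_j,Y_k]$ for $j\neq k$. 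The families $[X_i,Z]$ and $[Y_i,Z]$ require no hypothesis: since $Z$ is central in $B_n$ and $\sigma(Z)=\lambda Z$, one has $[\sigma(X_i),\sigma(Z)]=\lambda[\sigma(X_i),Z]=0$, and likewise for $Y_i$. Thus $\sigma$ extends to an algebra endomorphism.

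It remains to see this endomorphism is bijective, and here I expect the real content to lie. First I would reformulate 1)--4): ordering the $X,Y$-basis as $X_1,Y_1,\dots,X_n,Y_n$ and letting $J$ be the block-diagonal symplectic matrix with diagonal blocks $\left(\begin{smallmatrix}0&1\\-1&0\end{smallmatrix}\right)$, the four equations are exactly the single matrix identity
\[
A^{\mathsf T} J A=\lambda^{2}J,
\]
so that $A$ is conformal symplectic with multiplier $\lambda^{2}$. Taking determinants gives $\det(A)^{2}=\lambda^{4n}$, hence $\det(A)=\pm\lambda^{2n}\neq 0$ since $\lambda\neq0$; thus $A$, and therefore the block matrix $\left[\begin{smallmatrix}A&0\\ \rho&\lambda\end{smallmatrix}\right]$, is invertible on $M$. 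The inverse linear map satisfies $(A^{-1})^{\mathsf T}J A^{-1}=\lambda^{-2}J$, hence the analogue of 1)--4) with multiplier $\lambda^{-2}$, so by the previous paragraph it too extends to an endomorphism of $B_n$; the two extensions are mutually inverse on generators and hence on all of $B_n$, making $\sigma$ an automorphism.

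The only genuine obstacle I foresee is the bookkeeping in recognising 1)--4) as $A^{\mathsf T}JA=\lambda^{2}J$: one must match the parity-indexed entries $A_{2i-1,\bullet}$ and $A_{2i,\bullet}$ against the chosen symplectic ordering so that each $2\times2$ determinant becomes the appropriate entry of $A^{\mathsf T}JA$. Once that identification is set up correctly, both the invertibility argument and the reverse relation-check fall out immediately.
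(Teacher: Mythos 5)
Your proposal is correct, and its first part (the necessity of equations 1)--4)) is exactly the computation the paper performs in the paragraphs preceding the theorem: apply $\sigma$ to each defining commutator, reduce $X_iY_i-Y_iX_i$ to $Z^2$, and read off the coefficient of $Z^2$ as a sum of $2\times 2$ minors. Where you genuinely go beyond the paper is in the converse and in bijectivity: the paper simply declares ``We have proved the following'' after the forward computation, leaving the sufficiency of 1)--4) and the fact that the resulting endomorphism is an automorphism implicit. Your treatment of these points is sound. The universal-property argument for the extension is the standard one, and your observation that the relations $[X_i,Z]$, $[Y_i,Z]$ are preserved for free (because $\sigma(Z)=\lambda Z$ and the images of the generators are sent into the span of generators, so the commutators with $Z$ land in the ideal) is correct. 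The reformulation of 1)--4) as $A^{\mathsf T}JA=\lambda^2 J$ with $J$ the block-diagonal symplectic matrix in the ordering $X_1,Y_1,\dots,X_n,Y_n$ checks out entry by entry (note the paper's condition 4) has a typo, ``$i\neq k$'' for ``$j\neq k$'', which you have silently and correctly repaired), and it buys two things the paper does not state: that the equations alone force $\det A=\pm\lambda^{2n}\neq 0$, and that the inverse linear map satisfies the same equations with multiplier $\lambda^{-2}$, so the two induced endomorphisms are mutually inverse. An alternative, slightly cheaper route to bijectivity would be to note that a graded endomorphism of $B_n$ that is bijective in degree one is surjective (as $B_n$ is generated in degree one) and hence bijective on each finite-dimensional graded piece; but your conformal-symplectic identification is more informative, since it exhibits the group of such automorphisms with $\rho=0$ as the conformal symplectic group, consistent with the paper's subsequent corollary about products of subgroups of $Sl(2,K)$.
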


A particular case is obtained when $\rho =0$, $\lambda =1$ and the matrix $A$
satisfies $\left\vert 
\begin{array}{cc}
A_{2i-1,2i-1} & A_{2i-1,2i} \\ 
A_{2i,2i-1} & A_{2i,2i}%
\end{array}%
\right\vert =1$ for all $i$ and all remaining $2\times 2$ minors of $A$ are
zero. This is the product of $n$ matrices of seize $2\times 2$ and
determinant one.

\begin{corollary}
Let $G_{1},G_{2},...G_{n}$ be finite subgroups of $Sl(2,K),$then the product 
$G=$ $G_{1}\times G_{2}\times ...\times G_{n}$ acts as automorphism group of
the homogenized algebra in $n+1$ variables, $B_{n}.$
\end{corollary}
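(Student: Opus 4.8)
The plan is to realize each element of $G=G_{1}\times\cdots\times G_{n}$ as a block-diagonal linear map of $M$ that falls squarely into the particular case discussed just before the statement, and then to invoke the Theorem to extend it to $B_{n}$. So the corollary should follow almost immediately once the bookkeeping is set up correctly.

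First I would decompose the degree-one part as $M=\bigoplus_{\ell=1}^{n}V_{\ell}\oplus KZ$, where $V_{\ell}=KX_{\ell}\oplus KY_{\ell}$ is the two-dimensional space with ordered basis $X_{\ell},Y_{\ell}$. An element $g=(g_{1},\dots,g_{n})\in G$ acts on $M$ by letting $g_{\ell}\in Sl(2,K)$ act on $V_{\ell}$ and fixing $Z$; in the notation of the Theorem this is the block form $\left[\begin{smallmatrix} A & 0 \\ \rho & \lambda \end{smallmatrix}\right]$ with $\rho=0$, $\lambda=1$, and $A=\mathrm{diag}(g_{1},\dots,g_{n})$ the block-diagonal $2n\times 2n$ matrix whose $\ell$-th diagonal block is $g_{\ell}$.

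Next I would check conditions 1)--4) for this $A$. The key observation is that columns $2j-1$ and $2j$ of $A$ have nonzero entries only in rows $2j-1,2j$, i.e. in the $j$-th block. Hence in conditions 1), 2) and 4), each of which pairs a column from block $j$ with a column from a distinct block $k\neq j$, every $2\times 2$ minor appearing in the sum has a vanishing column, so these sums vanish identically. In condition 3) the two columns $2j-1,2j$ both come from block $j$, so the only surviving summand is $i=j$, and it equals $\det g_{j}=1$, matching $\lambda^{2}=1$. Thus $A$ satisfies 1)--4) and, by the Theorem, $g$ extends to an automorphism $\sigma_{g}$ of $B_{n}$.

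Finally I would verify that $g\mapsto\sigma_{g}$ is a faithful action by automorphisms. Since $B_{n}$ is generated in degree one, $\sigma_{g}$ is determined by its restriction to $M$; block-diagonal matrices multiply blockwise, so $A(gh)=A(g)A(h)$, whence $\sigma_{gh}=\sigma_{g}\sigma_{h}$ and we obtain a group homomorphism $G\to\mathrm{Aut}(B_{n})$. Injectivity follows because $\sigma_{g}=\mathrm{id}$ forces $A=\mathrm{id}$, i.e. each $g_{\ell}=\mathrm{id}$. The only genuine work is the index bookkeeping in the previous step; once the block-diagonal pattern of $A$ is pinned down, the four determinantal conditions are immediate, so I expect no real obstacle beyond keeping the row/column indexing of the $(X_{\ell},Y_{\ell})$ pairs straight.
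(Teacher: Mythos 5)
Your proposal is correct and follows essentially the same route as the paper: the paper proves the corollary via the remark immediately preceding it, identifying the particular case $\rho=0$, $\lambda=1$ with $A$ block-diagonal, each diagonal $2\times 2$ block of determinant one and all other $2\times 2$ minors zero, which is exactly your block-diagonal construction. Your write-up merely makes explicit the verification of conditions 1)--4) and the (routine) check that $g\mapsto\sigma_{g}$ is an injective group homomorphism, neither of which changes the argument.
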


\section{Structure of the homogenized Weyl algebra $B_{n}$ \newline
and its skew group algebra $B_{n}\ast G$.}

In this section we study the structure of the homogenized Weyl algebras $%
B_{n}$. We will see that they can be obtained from the homogenized algebras $%
B_{i}$, $B_{j}$ with $i+j=n$, as follows: $B_{n}=B_{i}\otimes
_{K}B_{j}/(Z\otimes 1-1\otimes Z)B_{i}\otimes _{K}B_{j}$. This result is
very similar to the situation of the Weyl algebras for which it is well
known $[$Co$]$ that for $i+j=n$, there is an isomorphism of $K$-algebras: $%
A_{n}=A_{i}\otimes _{K}A_{j}$ or the polynomial algebras $%
C_{i}=K[X_{1},X_{2},...X_{i}]$ , $C_{j}=K[X_{i+1},X_{i+2},...X_{n}]$ for
which the isomorphism $C_{n}\cong C_{i}\otimes C_{n-i}$ is well known.

\begin{theorem}
For integers $n$, $m>0$ let $B_{n}$, $B_{m}$ and $B_{n+m}$ be the
corresponding homogenized algebras. Then there exists an isomorphism of
graded $K$-algebras:

$B_{n+m}=B_{n}\otimes _{K}B_{m}/(Z\otimes 1-1\otimes Z)B_{n}\otimes
_{K}B_{m} $.
\end{theorem}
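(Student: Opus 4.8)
The plan is to exhibit an explicit graded algebra homomorphism $\phi$ from $B_{n+m}$ to the quotient $Q:=B_n\otimes_K B_m/(Z\otimes 1-1\otimes Z)B_n\otimes_K B_m$, to check it is well defined and surjective, and then to upgrade surjectivity to bijectivity by a graded dimension count. Write the generators of $B_n$ as $X_1,\dots,X_n,Y_1,\dots,Y_n,Z$ and those of $B_m$ as $X'_1,\dots,X'_m,Y'_1,\dots,Y'_m,Z'$. I define $\phi$ on the generators of $B_{n+m}$ by $X_i\mapsto X_i\otimes 1$ and $Y_i\mapsto Y_i\otimes 1$ for $1\le i\le n$, by $X_{n+k}\mapsto 1\otimes X'_k$ and $Y_{n+k}\mapsto 1\otimes Y'_k$ for $1\le k\le m$, and by $Z\mapsto Z\otimes 1$, which coincides with $1\otimes Z'$ in $Q$.

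First I would record that $\theta:=Z\otimes 1-1\otimes Z'$ is central in $B_n\otimes_K B_m$: since the center of each $B_N$ is $K[Z]$ (recalled in Section 1), $Z\otimes 1$ commutes with $B_n\otimes 1$ and with $1\otimes B_m$, and likewise for $1\otimes Z'$. Hence the right ideal $\theta(B_n\otimes_K B_m)$ is two-sided and $Q$ is a genuine graded quotient algebra. Next I would verify that $\phi$ respects the defining relations of $B_{n+m}$. The relations internal to the first block $1\le i,j\le n$ and to the second block follow from the relations of $B_n$ and of $B_m$, after identifying $(Z\otimes 1)^2=1\otimes (Z')^2=Z^2$ in $Q$. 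The only point needing care is the mixed case $i\le n<j$: here $\phi(X_i)=X_i\otimes 1$ and $\phi(Y_j)=1\otimes Y'_{j-n}$ commute in the tensor product, which is exactly the required relation $[X_i,Y_j]=\delta_{ij}Z^2=0$ because $i\ne j$; the same tensor commutativity handles the mixed $X$–$X$ and $Y$–$Y$ relations and all relations involving $Z$. Surjectivity is then immediate, since the images of the generators of $B_{n+m}$ generate $Q$.

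For injectivity I would compare graded dimensions. Because each $B_N$ has a PBW basis, its Hilbert series is $H_{B_N}(t)=(1-t)^{-(2N+1)}$, so $H_{B_n\otimes_K B_m}(t)=(1-t)^{-(2n+2m+2)}$ while $H_{B_{n+m}}(t)=(1-t)^{-(2n+2m+1)}$. Surjectivity of $\phi$ gives $\dim_K Q_d\le \dim_K (B_{n+m})_d$ in every degree $d$. Conversely, $Q$ is the quotient of $B_n\otimes_K B_m$ by the image of left multiplication by the degree-one element $\theta$, so $\dim_K Q_d\ge \dim_K(B_n\otimes_K B_m)_d-\dim_K(B_n\otimes_K B_m)_{d-1}$, and this difference is exactly the coefficient of $t^d$ in $(1-t)H_{B_n\otimes_K B_m}(t)=(1-t)^{-(2n+2m+1)}=H_{B_{n+m}}(t)$. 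Therefore $\dim_K Q_d=\dim_K (B_{n+m})_d$ for all $d$, which forces the surjective graded map $\phi$ to be an isomorphism in each degree, hence an isomorphism of graded $K$-algebras.

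The main obstacle is the relation-checking in the mixed block: one must see clearly that the fact that $X_i\otimes 1$ and $1\otimes Y'_{j-n}$ commute in the tensor factor is the correct shadow of the $B_{n+m}$-relation $[X_i,Y_j]=\delta_{ij}Z^2$ when $i\ne j$, and that the diagonal commutators reproduce $Z^2$ only after the single identification $Z\otimes1=1\otimes Z'$; this is precisely why $\theta$, and nothing more, must be killed. As a byproduct, the equality of Hilbert series shows that $\theta$ is a non-zero-divisor on $B_n\otimes_K B_m$.
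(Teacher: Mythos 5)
Your proof is correct, but it runs in the opposite direction from the paper's and replaces the key computation with a dimension count. The paper constructs the surjection $\varphi:B_{n}\otimes _{K}B_{m}\rightarrow B_{n+m}$, $b_{1}\otimes b_{2}\mapsto \varphi _{1}(b_{1})\varphi _{2}(b_{2})$, observes that $(Z\otimes 1-1\otimes Z)$ lies in $\ker \varphi $, and then proves the reverse inclusion by hand: an arbitrary kernel element is rewritten modulo $(Z\otimes 1-1\otimes Z)$ so that all powers of $Z$ are collected in one tensor factor, and applying $\varphi $ together with the linear independence of the PBW monomials of $B_{n+m}$ forces the residual part to vanish. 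You instead define $\phi :B_{n+m}\rightarrow Q=B_{n}\otimes _{K}B_{m}/(Z\otimes 1-1\otimes Z)$ on generators, verify the defining relations (your treatment of the mixed block and of the identification $(Z\otimes 1)^{2}=1\otimes (Z')^{2}$ is exactly the point that needs care, and you handle it), and then sandwich $\dim _{K}Q_{d}$ between the upper bound coming from surjectivity of $\phi $ and the lower bound $\dim (B_{n}\otimes B_{m})_{d}-\dim (B_{n}\otimes B_{m})_{d-1}$ coming from quotienting by the image of multiplication by a central degree-one element; since both bounds equal the coefficient of $t^{d}$ in $(1-t)^{-(2n+2m+1)}$, the surjection is an isomorphism. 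Both arguments ultimately rest on the Poincar\'{e}--Birkhoff basis of the homogenized Weyl algebras, but yours uses it only through the Hilbert series, which makes the argument less computational and yields, as you note, the additional fact that $Z\otimes 1-1\otimes Z$ is a non-zero-divisor on $B_{n}\otimes _{K}B_{m}$ --- something the paper's normal-form manipulation does not record explicitly. The trade-off is that your route requires the explicit relation-checking to know $\phi $ is well defined, whereas the paper's map is automatically a homomorphism because the images of $B_{n}$ and $B_{m}$ commute in $B_{n+m}$.
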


\begin{proof}
The algebras $B_{n}$, $B_{m}$ and $B_{n+m}$ can be written as:

$B_{n}=K<X_{1},X_{2},...X_{n},Y_{1},Y_{2},...Y_{n},Z>/\{[X_{i},\delta
_{j}]-\partial _{ij}Z^{2}$, $[X_{i},X_{j}]$, $[Y_{i},Y_{j}]$, $[X_{i},Z`]$, $%
[Y_{i},Z`]\}$

$B_{m}=K<X_{n+1},X_{n+2},...X_{n+m},Y_{n+1},Y_{n+2},...Y_{n+m},Z>/\{[X_{i},%
\delta _{j}]-\partial _{ij}Z^{2}$, $[X_{i},X_{j}]$, $[Y_{i},Y_{j}]$, $%
[X_{i},Z`]$, $[Y_{i},Z`]\}$

$B_{n+m}=K<X_{1},X_{2},...X_{n+m},Y_{1},Y_{2},...Y_{n+m},Z>/\{[X_{i},\delta
_{j}]-\partial _{ij}Z^{2}$, $[X_{i},X_{j}]$, $[Y_{i},Y_{j}]$, $[X_{i},Z`]$, $%
[Y_{i},Z`]\}$.

Then there are natural inclusions: $\varphi _{1}:B_{n}\rightarrow B_{n+m}$
and $\varphi _{2}:B_{m}\rightarrow B_{n+m}$ given by: $\varphi
_{i}(X_{j})=X_{j}$, $\varphi _{i}(Y_{j})=Y_{j}$, $\varphi _{i}(Z)=Z$ and $%
i=1,2$.

Let $j_{1}:B_{n}\rightarrow B_{n+m}$ and $j_{2}:B_{m}\rightarrow B_{n+m}$,
be the inclusions $j_{1}(b_{1})=b_{1}\otimes 1$, and $j_{2}(b_{2})=1\otimes
b_{2}$. Since multiplication is bilinear we have a vector space map: $%
\varphi :B_{n}\otimes _{K}B_{m}\rightarrow B_{n+m}$ given by $\varphi
(b_{1}\otimes b_{2})=\varphi _{1}(b_{1})\varphi _{2}(b_{2})$, such that the
diagram:

*) $%
\begin{array}{ccccc}
B_{n} & \overset{j_{1}}{\rightarrow } & B_{n}\otimes _{K}B_{m} & \overset{%
j_{2}}{\longleftarrow } & B_{m} \\ 
& \varphi _{1}\searrow & \downarrow \varphi & \swarrow \varphi _{2} &  \\ 
&  & B_{n+m} &  & 
\end{array}%
$

commutes.

Since $\varphi _{1}(b_{1})\varphi _{2}(b_{2})=\varphi _{2}(b_{2})\varphi
_{1}(b_{1})$ with $b_{1}\in B_{n}$, $b_{2}\in B_{m}$ , $\varphi $ is an
algebra homomorphism. It is clear $\varphi $ is surjective and $\varphi
(Z\otimes 1-1\otimes Z)=0$, hence $(Z\otimes 1-1\otimes Z)B_{n}\otimes
_{K}B_{m}$ is contained in the kernel of $\varphi $. We will prove that they
are actually equal.

Let $b$ be an element of degree $t$ in the kernel of $\varphi $.

The element $b$ has the following form:

$\underset{_{\alpha +\beta +k+,\ell +\alpha ^{\prime }+\beta ^{\prime }=t}}{%
\dsum }C_{\alpha ,\beta ,k,}X^{\alpha }Y^{\beta }Z^{k}\otimes B_{\alpha
^{\prime },\beta ^{\prime },\ell ,}X_{m}^{\alpha ^{\prime }}Y_{m}^{\beta
^{\prime }}Z^{\ell }$, with $X^{\alpha }=X_{1}^{\alpha _{1}}X_{2}^{\alpha
_{2}}...X_{n}^{\alpha _{n}}$, $Y^{\beta }=Y_{1}^{\beta _{1}}Y_{2}^{\beta
_{2}}...Y_{n}^{\beta _{n}}$ and $X_{m}^{\alpha ^{\prime }}=X_{n+1}^{\alpha
_{1}^{\prime }}X_{n+2}^{\alpha _{2}^{\prime }}...X_{n+m}^{\alpha
_{m}^{\prime }}$, $Y_{m}^{\beta ^{\prime }}=Y_{n+1}^{\beta _{1}^{\prime
}}Y_{n+2}^{\beta _{2}^{\prime }}...Y_{n+m}^{\beta _{m}^{\prime }}$.

Assume $k>0.$

Then $X^{\alpha }Y^{\beta }Z^{k}\otimes X_{m}^{\alpha ^{\prime
}}Y_{m}^{\beta ^{\prime }}Z^{\ell }=X^{\alpha }Y^{\beta }Z^{k-1}\otimes
X_{m}^{\alpha ^{\prime }}Y_{m}^{\beta ^{\prime }}Z^{\ell }(Z\otimes
1)=X^{\alpha }Y^{\beta }Z^{k-1}\otimes X_{m}^{\alpha ^{\prime }}Y_{m}^{\beta
^{\prime }}Z^{\ell }(Z\otimes 1-1\otimes Z)+X^{\alpha }Y^{\beta
}Z^{k-1}\otimes X_{m}^{\alpha ^{\prime }}Y_{m}^{\beta ^{\prime }}Z^{\ell +1}$%
.

By induction,

$C_{\alpha ,\beta ,k,}X^{\alpha }Y^{\beta }Z^{k}\otimes B_{\alpha ^{\prime
},\beta ^{\prime },\ell ,}X_{m}^{\alpha ^{\prime }}Y_{m}^{\beta ^{\prime
}}Z^{\ell }$=$g(X,Y,Z)(Z\otimes 1-1\otimes Z)$+$C_{\alpha ,\beta
,k,}X^{\alpha }Y^{\beta }\otimes B_{\alpha ^{\prime },\beta ^{\prime },\ell
,}X_{m}^{\alpha ^{\prime }}Y_{m}^{\beta ^{\prime }}Z^{k+\ell }$ and $%
g(X,Y,Z) $ an expression in $X$, $Y$, $Z$..

Then

$\underset{_{\alpha +\beta +k+,\ell +\alpha ^{\prime }+\beta ^{\prime }=t}}{%
\dsum }C_{\alpha ,\beta ,k,}X^{\alpha }Y^{\beta }Z^{k}\otimes B_{\alpha
^{\prime },\beta ^{\prime },\ell ,}X_{m}^{\alpha ^{\prime }}Y_{m}^{\beta
^{\prime }}Z^{\ell }=G(X,Y,Z)$ $(Z\otimes 1-1\otimes Z)$+$\underset{r}{\dsum
(}\underset{r=k+\ell }{\underset{_{\alpha +\beta +\alpha ^{\prime }+\beta
^{\prime }=t-r}}{\dsum }}C_{\alpha ,\beta ,k}B_{\alpha ^{\prime },\beta
^{\prime },\ell }X^{\alpha }Y^{\beta }Z^{k}\otimes X_{m}^{\alpha ^{\prime
}}Y_{m}^{\beta ^{\prime }})(1\otimes Z^{r}).$

Applying $\varphi $ to the above expression we get:

$\varphi (b)=0=\underset{r}{\dsum (}\underset{r=k+\ell }{\underset{_{\alpha
+\beta +\alpha ^{\prime }+\beta ^{\prime }=t-r}}{\dsum }}C_{\alpha ,\beta
,k}B_{\alpha ^{\prime },\beta ^{\prime },\ell }X_{1}^{\alpha
_{1}}X_{2}^{\alpha _{2}}...X_{n}^{\alpha _{n}}Y_{1}^{\beta _{1}}Y_{2}^{\beta
_{2}}...Y_{n}^{\beta _{n}}$\newline
$X_{n+1}^{\alpha _{1}^{\prime }}X_{n+2}^{\alpha _{2}^{\prime
}}...X_{n+m}^{\alpha _{m}^{\prime }}Y_{n+1}^{\beta _{1}^{\prime
}}Y_{n+2}^{\beta _{2}^{\prime }}...Y_{n+m}^{\beta _{m}^{\prime }})Z^{r}.$

Using the fact $X_{n+j}Y_{i}=Y_{i}X_{n+j}$ for $1\leqslant i\leqslant m$ we
obtain:

$\underset{r}{\dsum (}\underset{r=k+\ell }{\underset{_{\alpha +\beta +\alpha
^{\prime }+\beta ^{\prime }=t-r}}{\dsum }}C_{\alpha ,\beta ,k}B_{\alpha
^{\prime },\beta ^{\prime },\ell }X_{1}^{\alpha _{1}}X_{2}^{\alpha
_{2}}...X_{n}^{\alpha _{n}}X_{n+1}^{\alpha _{1}^{\prime }}X_{n+2}^{\alpha
_{2}^{\prime }}...X_{n+m}^{\alpha _{m}^{\prime }}$

$Y_{1}^{\beta _{1}}Y_{2}^{\beta _{2}}...Y_{n}^{\beta _{n}}Y_{n+1}^{\beta
_{1}^{\prime }}Y_{n+2}^{\beta _{2}^{\prime }}...Y_{n+m}^{\beta _{m}^{\prime
}})Z^{r}=0.$

It follows:

$\underset{r=k+\ell }{\underset{_{\alpha +\beta +\alpha ^{\prime }+\beta
^{\prime }=t-r}}{\dsum }}C_{\alpha ,\beta ,k}B_{\alpha ^{\prime },\beta
^{\prime },\ell }X^{\alpha }Y^{\beta }\otimes X_{m}^{\alpha ^{\prime
}}Y_{m}^{\beta ^{\prime }}=0.$

Therefore:

$\underset{_{\alpha +\beta +k+,\ell +\alpha ^{\prime }+\beta ^{\prime }=t}}{%
\dsum }C_{\alpha ,\beta ,k,}X^{\alpha }Y^{\beta }Z^{k}\otimes B_{\alpha
^{\prime },\beta ^{\prime },\ell ,}X_{m}^{\alpha ^{\prime }}Y_{m}^{\beta
^{\prime }}Z^{\ell }=G(X,Y,Z)$ $(Z\otimes 1-1\otimes Z)$ is an element of $%
(Z\otimes 1-1\otimes Z)B_{n}\otimes _{K}B_{m}$ , as claimed.
\end{proof}

Let $\Lambda $, $\Gamma $ be $K$-algebras and $G$, $H$ finite group of
automorphisms of $\Lambda $, $\Gamma $, respectively. Assume the
characteristic of $K$ does not divide neither the order of $G$ nor the order
of $H$. By the universal property of the coproduct, given $\sigma \in G$, $%
\tau \in H$, there is a commutative diagram:

\begin{center}
$%
\begin{array}{ccccc}
\Lambda & \overset{j_{1}}{\rightarrow } & \Lambda \otimes _{K}\Gamma & 
\overset{j_{2}}{\longleftarrow } & \Gamma \\ 
\sigma \downarrow &  & (\sigma ,\tau )\downarrow &  & \downarrow \tau \\ 
\Lambda & \overset{j_{1}}{\rightarrow } & \Lambda \otimes _{K}\Gamma & 
\overset{j_{2}}{\longleftarrow } & \Gamma%
\end{array}%
$
\end{center}

with $(\sigma ,\tau )(\lambda \otimes \gamma )=\sigma (\lambda )\otimes \tau
(\gamma )$, the homomorphism $(\sigma ,\tau )$ is an automorphism, such that 
$(\sigma ,\tau )=1$ implies $\sigma =1$ and $\tau =1.$Hence $G\times H$
embeds faithfully in the automorphism group of $\Lambda \otimes _{K}\Gamma .$

The following proposition is well known: (see for example $[CR]$).

\begin{proposition}
Let $K$ be a field and $G$, $H$, finite groups. Then there is a natural
isomorphism of finite dimensional algebras: $K(G\times H)\cong KG\otimes
_{K}K(H)$.
\end{proposition}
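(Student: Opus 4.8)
The plan is to construct the isomorphism $K(G\times H)\cong KG\otimes_K K(H)$ explicitly on the natural basis and verify that it respects multiplication. Recall that $KG$ has $K$-basis $\{\sigma : \sigma\in G\}$ and $K(H)$ has $K$-basis $\{\tau : \tau\in H\}$, so $KG\otimes_K K(H)$ has $K$-basis $\{\sigma\otimes\tau : \sigma\in G,\ \tau\in H\}$; likewise $K(G\times H)$ has $K$-basis $\{(\sigma,\tau):(\sigma,\tau)\in G\times H\}$. Since these two bases are indexed by the same set $G\times H$, I would define the $K$-linear map $\Phi:K(G\times H)\to KG\otimes_K K(H)$ on basis elements by $\Phi((\sigma,\tau))=\sigma\otimes\tau$, extended linearly. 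By construction $\Phi$ sends a basis bijectively onto a basis, so it is automatically a $K$-linear isomorphism of vector spaces; the only real content is that it is multiplicative.

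Next I would check that $\Phi$ is an algebra homomorphism. It suffices to verify multiplicativity on basis elements, since both products are $K$-bilinear. In $K(G\times H)$ the product of basis elements follows the group law of the direct product: $(\sigma_1,\tau_1)(\sigma_2,\tau_2)=(\sigma_1\sigma_2,\tau_1\tau_2)$. In the tensor product $KG\otimes_K K(H)$, the algebra structure is the componentwise one, $(\sigma_1\otimes\tau_1)(\sigma_2\otimes\tau_2)=\sigma_1\sigma_2\otimes\tau_1\tau_2$. Applying $\Phi$ to the left-hand group product gives $\Phi((\sigma_1\sigma_2,\tau_1\tau_2))=\sigma_1\sigma_2\otimes\tau_1\tau_2$, which matches $\Phi((\sigma_1,\tau_1))\,\Phi((\sigma_2,\tau_2))$. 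Thus $\Phi$ is multiplicative on basis elements, hence on all of $K(G\times H)$. Finally $\Phi$ sends the identity $(e_G,e_H)$ to $e_G\otimes e_H$, the identity of the tensor algebra, so $\Phi$ is a unital algebra isomorphism.

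I expect there to be essentially no obstacle here: the statement is a formal bookkeeping identity, and the entire proof reduces to matching the two evident bases and comparing the two group-law computations above. The only point deserving a word of care is the naturality asserted in the statement, namely that $\Phi$ is independent of any choices; this is immediate because the defining formula $\Phi((\sigma,\tau))=\sigma\otimes\tau$ is canonical, being induced by the two inclusions $G\hookrightarrow G\times H$ and $H\hookrightarrow G\times H$ together with the universal property of the tensor product of algebras. This is precisely the reason the result is quoted as well known and cited to $[CR]$, and I would simply record the explicit map and its multiplicativity as above.
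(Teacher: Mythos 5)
Your proof is correct and follows exactly the paper's argument: define $\varphi((\sigma,\tau))=\sigma\otimes\tau$ on basis elements, observe it carries a basis to a basis and is therefore a linear isomorphism, and check multiplicativity against the componentwise product on the tensor algebra. You merely spell out the multiplicativity check that the paper dismisses as ``easy to see,'' which is a welcome addition but not a different route.
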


\begin{proof}
We define a map $\varphi :K(G\times H)\rightarrow KG\otimes _{K}K(H)$ as $%
\varphi (g,h)=g\otimes h$ and extend it linearly. The map sends basis to
basis, hence is a vector space isomorphism. It is easy to see it is also a
ring isomorphism.
\end{proof}

We use this result to prove the following:

\begin{theorem}
Let $\Lambda $, $\Gamma $ be graded $K$-algebras and $G$, $H$ finite group
of (grade preserving) automorphisms of $\Lambda $, $\Gamma $, respectively.
Assume the characteristic of $K$ does not divide neither the order of $G$
nor the order of $H$. Denote by $\Lambda \ast G$ and $\Gamma \ast H$ the
corresponding skew group algebras. Then there is a natural isomorphism of
(graded) $K$-algebras: $\Lambda \otimes _{K}\Gamma \ast (G\times H)\cong
\Lambda \ast G\otimes _{K}\Gamma \ast H$.
\end{theorem}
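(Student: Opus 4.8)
The plan is to construct an explicit $K$-algebra isomorphism
$$
\Phi : (\Lambda \otimes_K \Gamma) \ast (G \times H) \longrightarrow (\Lambda \ast G) \otimes_K (\Gamma \ast H)
$$
and verify it is bijective and multiplicative. Recall that as a $K$-vector space, a skew group algebra $\Lambda \ast G$ is $\Lambda \otimes_K KG$ with multiplication twisted by the action, and elements are written as finite sums $\sum_{\sigma} \lambda_\sigma \sigma$ with $(\lambda \sigma)(\lambda' \tau) = \lambda\,\sigma(\lambda')\,\sigma\tau$. The natural guess for $\Phi$ on the spanning set $(\lambda \otimes \gamma)(\sigma,\tau)$ is
$$
\Phi\bigl((\lambda \otimes \gamma)(\sigma,\tau)\bigr) = (\lambda \sigma) \otimes (\gamma \tau),
$$
extended $K$-linearly. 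First I would check that this is well defined and a vector space isomorphism: both sides have the same underlying space, since by the earlier Proposition $K(G \times H) \cong KG \otimes_K K(H)$ and tensor products commute suitably, so $\Phi$ sends the basis $\{(\lambda \otimes \gamma)(\sigma,\tau)\}$ (for basis elements $\lambda$, $\gamma$ and group elements $\sigma$, $\tau$) bijectively to the basis $\{(\lambda\sigma)\otimes(\gamma\tau)\}$ of the codomain. Linearity and bijectivity are then immediate, mirroring the proof of the cited Proposition.

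The substantive step is checking that $\Phi$ respects multiplication. On the domain, the action of $(\sigma,\tau) \in G \times H$ on $\Lambda \otimes_K \Gamma$ is exactly the automorphism $(\sigma,\tau)(\lambda \otimes \gamma) = \sigma(\lambda) \otimes \tau(\gamma)$ described in the commutative diagram preceding Proposition~3, so the product of two spanning elements in the domain is
$$
\bigl((\lambda \otimes \gamma)(\sigma,\tau)\bigr)\bigl((\lambda' \otimes \gamma')(\sigma',\tau')\bigr) = \bigl(\lambda\,\sigma(\lambda') \otimes \gamma\,\tau(\gamma')\bigr)(\sigma\sigma',\tau\tau').
$$
Applying $\Phi$ to this yields $\bigl(\lambda\,\sigma(\lambda')\,\sigma\sigma'\bigr) \otimes \bigl(\gamma\,\tau(\gamma')\,\tau\tau'\bigr)$. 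I would then compute the product of the images on the codomain side, namely $\bigl((\lambda\sigma)\otimes(\gamma\tau)\bigr)\bigl((\lambda'\sigma')\otimes(\gamma'\tau')\bigr)$, using the tensor product algebra structure and the skew group multiplication in each factor; this gives $\bigl(\lambda\sigma \cdot \lambda'\sigma'\bigr) \otimes \bigl(\gamma\tau \cdot \gamma'\tau'\bigr) = \bigl(\lambda\,\sigma(\lambda')\,\sigma\sigma'\bigr) \otimes \bigl(\gamma\,\tau(\gamma')\,\tau\tau'\bigr)$. The two expressions agree, establishing that $\Phi$ is an algebra homomorphism, hence an isomorphism.

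The main obstacle, and the only place real care is needed, is the bookkeeping in the tensor product algebra: one must confirm that the multiplication rule $(a \otimes b)(a' \otimes b') = aa' \otimes bb'$ in $(\Lambda \ast G) \otimes_K (\Gamma \ast H)$ interacts correctly with the twisting, i.e. that no extraneous sign or reordering arises because the $G$-action touches only the $\Lambda$-factor and the $H$-action only the $\Gamma$-factor. Since $G$ and $H$ act on distinct tensor factors, these actions are independent and commute, which is precisely what makes the componentwise comparison go through cleanly. Finally, in the graded setting I would remark that since $G$ and $H$ act by grade-preserving automorphisms and $\Phi$ matches homogeneous components on each side, the isomorphism is automatically graded; the hypothesis that $\operatorname{char} K$ divides neither $|G|$ nor $|H|$ is not needed for the isomorphism itself but is inherited for later applications (semisimplicity of the group algebras).
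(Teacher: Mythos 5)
Your proposal is correct and follows essentially the same route as the paper: both establish the vector space identification via $K(G\times H)\cong KG\otimes_K KH$ and the reshuffling of tensor factors, and both verify compatibility of the multiplication by tracking how $(\sigma,\tau)$ acts componentwise on $\lambda\otimes\gamma$. Your version simply writes out the multiplicativity check on a pair of spanning elements more explicitly than the paper does, and your closing remark that the hypothesis on $\operatorname{char} K$ is not actually used in this argument is accurate.
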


\begin{proof}
We have vector space isomorphisms: $\Lambda \otimes _{K}\Gamma \ast (G\times
H)\cong \Lambda \otimes _{K}\Gamma \otimes _{K}K(G\times H)\cong \Lambda
\otimes _{K}\Gamma \otimes _{K}KG\otimes _{K}K(H)\cong \Lambda \otimes
_{K}KG\otimes _{K}\Gamma \otimes _{K}K(H)\cong \Lambda \ast G\otimes
_{K}\Gamma \ast H$.

We look to the action of $G\times H$ on $\Lambda \otimes _{K}\Gamma .$

Let $(\sigma ,\tau )$ be an element of $G\times H$ and $\lambda \otimes
\gamma \in $ $\Lambda \otimes _{K}\Gamma .$Then ($\sigma ,\tau )\lambda
\otimes \gamma =\lambda ^{\sigma }\sigma \otimes \gamma ^{\tau }\tau
=\lambda ^{\sigma }\otimes \gamma ^{\tau }$($\sigma ,\tau )=\lambda ^{\sigma
}\otimes \gamma ^{\tau }\otimes \sigma \otimes \tau =\lambda ^{\sigma
}\otimes \sigma \otimes \gamma ^{\tau }\otimes \tau =$ ($\sigma \otimes \tau
)(\lambda \otimes \gamma ).$

It follows $\Lambda \otimes _{K}\Gamma \ast (G\times H)\cong \Lambda \ast
G\otimes _{K}\Gamma \ast H$ as (graded) algebras.
\end{proof}

\begin{corollary}
Let $K$- be a field of zero characteristic, $B_{n}$, $B_{m}$ homogenized
Weyl algebras and $G,$ $H$, finite groups of grade preserving automorphisms
of $B_{n}$ and $B_{m},$respectively. Assume for all $\sigma \in G$ and $\tau
\in H$, $\sigma (Z)=Z$ and $\tau (Z)=Z$. Then $G\times H$ acts on $B_{n+m}$
and there is an isomorphism of graded algebras: $B_{n+m}\ast (G\times
H)\cong B_{n}\ast G\otimes _{K}B_{m}\ast H/(Z\otimes 1-1\otimes Z)B_{n}\ast
G\otimes _{K}B_{m}\ast H$.

If we denote by $B_{n+m}^{G\times H}$, $B_{n}^{G}$, $B_{m}^{H}$ the rings of
invariants, then we have an isomorphism of algebras: $B_{n+m}^{G\times
H}\cong B_{n}^{G}\otimes _{K}B_{m}^{H}$ $/Z\otimes 1$- $1\otimes
Z)B_{n}^{G}\otimes _{K}B_{m}^{H}$.
\end{corollary}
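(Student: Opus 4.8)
The plan is to reduce everything to the two theorems already proved in this section, writing $w:=Z\otimes 1-1\otimes Z$ and keeping careful track of the two-sided ideal it generates. First I would check that $G\times H$ genuinely acts on $B_{n+m}$. By the discussion preceding the isomorphism $K(G\times H)\cong KG\otimes_K K(H)$, the group $G\times H$ acts on $B_{n}\otimes_K B_{m}$ by $(\sigma,\tau)(b_{1}\otimes b_{2})=\sigma(b_{1})\otimes\tau(b_{2})$. Since by hypothesis every $\sigma\in G$ and every $\tau\in H$ fixes $Z$, we get $(\sigma,\tau)(w)=\sigma(Z)\otimes 1-1\otimes\tau(Z)=w$, so $w$ is $(G\times H)$-invariant. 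Moreover $w$ is central in $B_{n}\otimes_K B_{m}$ because $Z$ lies in the center $K[Z]$ of each factor, so the two-sided ideal $(w)=w\cdot(B_{n}\otimes_K B_{m})$ is a $(G\times H)$-invariant two-sided ideal, and the action descends to the quotient $B_{n+m}\cong B_{n}\otimes_K B_{m}/(w)$ supplied by the first theorem of this section.

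The formal engine of the proof is that forming a skew group algebra commutes with quotienting by an invariant ideal: if a finite group $\Gamma$ acts on a $K$-algebra $A$ and $I\subseteq A$ is a $\Gamma$-invariant two-sided ideal, then $(A/I)\ast\Gamma\cong(A\ast\Gamma)/J$, where $J=\bigoplus_{g\in\Gamma}Ig$ is the two-sided ideal of $A\ast\Gamma$ generated by $I$. This is immediate: the surjection $A\to A/I$ induces a surjective algebra map $A\ast\Gamma\to(A/I)\ast\Gamma$, $ag\mapsto\bar a g$, whose kernel is exactly $\bigoplus_g Ig$, and invariance of $I$ forces this sum to be a two-sided ideal, necessarily the one generated by $I$. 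Applying this with $A=B_{n}\otimes_K B_{m}$, $\Gamma=G\times H$, $I=(w)$ gives $B_{n+m}\ast(G\times H)\cong\bigl(B_{n}\otimes_K B_{m}\ast(G\times H)\bigr)/\bigl((w)\ast(G\times H)\bigr)$. Now I would feed in the isomorphism $\Phi:B_{n}\otimes_K B_{m}\ast(G\times H)\xrightarrow{\sim}B_{n}\ast G\otimes_K B_{m}\ast H$ of the preceding theorem. Since $\Phi$ sends $(\lambda\otimes\gamma)(\sigma,\tau)$ to $(\lambda\sigma)\otimes(\gamma\tau)$, taking $\sigma=\tau=1$ shows $\Phi$ fixes $w$; being a ring isomorphism it therefore carries the two-sided ideal generated by $w$ onto the two-sided ideal generated by $w$ on the target, and as $w$ stays central this ideal is $(Z\otimes 1-1\otimes Z)B_{n}\ast G\otimes_K B_{m}\ast H$. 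This yields the first asserted isomorphism, and it is graded because $w$ is homogeneous of degree one and every map in sight is grade preserving.

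For the ring of invariants I would argue directly, exploiting that $\operatorname{char}K=0$ makes the averaging (Reynolds) operators $R_{G}=\frac{1}{|G|}\sum_{\sigma}\sigma$ and $R_{H}=\frac{1}{|H|}\sum_{\tau}\tau$ available. Since $R_{G\times H}=R_{G}\otimes R_{H}$, its image is $(B_{n}\otimes_K B_{m})^{G\times H}=B_{n}^{G}\otimes_K B_{m}^{H}$, and the functor of $(G\times H)$-invariants is exact. Applying it to the short exact sequence $0\to(w)\to B_{n}\otimes_K B_{m}\to B_{n+m}\to 0$ gives $B_{n+m}^{G\times H}\cong(B_{n}^{G}\otimes_K B_{m}^{H})/(w)^{G\times H}$. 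It then remains to identify $(w)^{G\times H}$ with $(Z\otimes 1-1\otimes Z)B_{n}^{G}\otimes_K B_{m}^{H}$: the inclusion $\supseteq$ is clear since $w$ is invariant, while for $\subseteq$ one uses that $R_{G\times H}$ is a bimodule map over the invariants, so any invariant element $x=wc$ satisfies $x=R_{G\times H}(wc)=w\,R_{G\times H}(c)$ with $R_{G\times H}(c)\in B_{n}^{G}\otimes_K B_{m}^{H}$. This gives the second isomorphism.

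The step I expect to be the main obstacle is precisely this last identification of $(w)^{G\times H}$, namely that taking invariants of the principal ideal $(w)$ again produces the principal ideal generated by $w$ over the invariant ring; it is the one place where centrality of $w$, its $(G\times H)$-invariance, and the module property of the Reynolds operator must all be used at once. The analogous bookkeeping for the ideal $(w)\ast(G\times H)$ under $\Phi$ in the skew-group case is the same phenomenon in easier form, so once the invariant-ideal identity is nailed down both isomorphisms follow.
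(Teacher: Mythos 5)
Your proof is correct, and the first half follows the paper's own route almost exactly: you check that $w=Z\otimes 1-1\otimes Z$ generates a $(G\times H)$-invariant ideal so the action descends to $B_{n+m}$, pass to the exact sequence $0\to (w)\ast(G\times H)\to (B_{n}\otimes_{K}B_{m})\ast(G\times H)\to B_{n+m}\ast(G\times H)\to 0$, and transport the ideal through the isomorphism $(B_{n}\otimes_{K}B_{m})\ast(G\times H)\cong B_{n}\ast G\otimes_{K}B_{m}\ast H$; this is exactly what the paper does, just with the general lemma $(A/I)\ast\Gamma\cong(A\ast\Gamma)/(I\ast\Gamma)$ stated explicitly. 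Where you genuinely diverge is the statement about invariants. The paper stays inside the skew-group-algebra formalism: it uses the idempotents $e=\frac{1}{|G|}\sum_{g}g$ and $f=\frac{1}{|H|}\sum_{h}h$, the identification $B^{G}\cong e(B\ast G)e$, the fact that $(e,f)$ corresponds to $e\otimes f$ under $K(G\times H)\cong KG\otimes_{K}KH$, and then cuts the first isomorphism down by the corner $(e,f)(-)(e,f)$. You instead apply the exact invariants functor (Maschke, $\operatorname{char}K=0$) to the short exact sequence $0\to(w)\to B_{n}\otimes_{K}B_{m}\to B_{n+m}\to 0$ and identify $(w)^{G\times H}=w\cdot(B_{n}^{G}\otimes_{K}B_{m}^{H})$ via the Reynolds operator, using that $R(wc)=wR(c)$ because $w$ is invariant and central. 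Both arguments are sound; yours is more elementary and isolates cleanly the one nontrivial point (that invariants of the principal ideal $(w)$ form the principal ideal over the invariant ring), while the paper's idempotent computation buys uniformity -- the same corner-algebra manipulation simultaneously handles the skew group algebras and their invariant subrings, which is the pattern reused later in the paper for the Morita-equivalence results.
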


\begin{proof}
Given $\sigma \in G$, $\tau \in H$, we have a commutative diagram:

$%
\begin{array}{ccccc}
B_{n} & \overset{j_{1}}{\rightarrow } & B_{n}\otimes _{K}B_{m} & \overset{%
j_{2}}{\longleftarrow } & B_{m} \\ 
\sigma \downarrow &  & \sigma \otimes \tau \downarrow &  & \downarrow \tau
\\ 
B_{n} & \overset{j_{1}}{\rightarrow } & B_{n}\otimes _{K}B_{m} & \overset{%
j_{2}}{\longleftarrow } & B_{m} \\ 
& \varphi _{1}\searrow & \downarrow \varphi & \swarrow \varphi _{2} &  \\ 
&  & B_{n+m} &  & 
\end{array}%
$

Let $\underset{i=1}{\overset{n}{\sum }}b_{i}\otimes b_{i}^{\prime }$ be an
element of the kernel of $\varphi \sigma \otimes \tau $. Then $\varphi ($ $%
\underset{i=1}{\overset{n}{\sum }\sigma }b_{i}\otimes \tau b_{i}^{\prime
})=0.$

By that above description of $Ker\varphi $, $\underset{i=1}{\overset{n}{\sum 
}\sigma }b_{i}\otimes \tau b_{i}^{\prime }=g(X,Y,Z)(Z\otimes 1$-$1\otimes Z)$%
.

Therefore: \newline
$\underset{i=1}{\overset{n}{\sum }}b_{i}\otimes b_{i}^{\prime }$ =(($\sigma
^{-1},\tau ^{-1}$)($g(X,Y,Z))(\sigma ^{-1}Z\otimes 1$-$1\otimes \tau ^{-1}Z)$%
=$g^{\prime }(X,Y,Z)(Z\otimes 1$-$1\otimes Z).$

It follows $Ker\varphi \sigma \otimes \tau $=$(Z\otimes 1$-$1\otimes
Z)B_{n}\otimes _{K}B_{m}$ and the map $\varphi \sigma \otimes \tau $ factors
through $B_{n+m},$denote by $\overline{\sigma \otimes \tau }$ the induced
map, which is clearly an automorphism of $B_{n+m}$ such that the diagram:

$%
\begin{array}{ccccc}
B_{n} & \overset{\varphi _{1}}{\rightarrow } & B_{n+m} & \overset{\varphi
_{2}}{\longleftarrow } & B_{m} \\ 
\sigma \downarrow &  & \overline{\sigma \otimes \tau }\downarrow &  & 
\downarrow \tau \\ 
B_{n} & \overset{\varphi _{1}}{\rightarrow } & B_{n+m} & \overset{\varphi
_{2}}{\longleftarrow } & B_{m}%
\end{array}%
$

The above diagram *) induces a commutative diagram of graded algebras and
homomorphisms:

$%
\begin{array}{ccccc}
B_{n}\ast G & \overset{\overline{j_{1}}}{\rightarrow } & B_{n}\otimes
_{K}B_{m}(G\times H) & \overset{\overline{j_{2}}}{\longleftarrow } & 
B_{m}\ast H \\ 
& \overline{\varphi _{1}}\searrow & \downarrow \overline{\varphi } & 
\swarrow \overline{\varphi _{2}} &  \\ 
&  & B_{n+m}\ast (G\times H) &  & 
\end{array}%
$

There is an exact sequence:

0$\rightarrow (Z\otimes 1$- $1\otimes Z)B_{n}\otimes _{K}B_{m}(G\times
H)\rightarrow B_{n}\otimes _{K}B_{m}(G\times H)\rightarrow B_{n+m}\ast
(G\times H)\rightarrow $0

From the isomorphism ($B_{n}\otimes _{K}B_{m})\ast (G\times H)\cong
B_{n}\ast G\otimes _{K}B_{m}\ast H$, it follows $B_{n+m}\ast (G\times
H)\cong B_{n}\ast G\otimes _{K}B_{m}\ast H/(Z\otimes 1$-$1\otimes
Z)B_{n}\ast G\otimes _{K}B_{m}\ast H$.

Now let $e=1/\left\vert G\right\vert \underset{g\in G}{\sum g}$, and $%
f=1/\left\vert H\right\vert \underset{h\in H}{\sum }h$ be elements of $KG$
and $KH$, respectively. The elements $e$ and $f$ are idempotents, it is well
known and easy to prove that the rings $B_{n}^{G}$ and $B_{m}^{H}$ are
isomorphic $e(B_{n}\ast G)e$ and $f(B_{m}\ast H)f$, respectively.

Under the isomorphism $K(G\times H)\rightarrow KG\otimes _{K}KH$ the
idempotent $(e,f)=1/\left\vert G\times H\right\vert \underset{(g,h)\in
G\times H}{\sum }(g,h)$ correspond to $e\otimes f.$

Then from the isomorphism: ($B_{n}\otimes _{K}B_{m})\ast (G\times H)\cong
B_{n}\ast G\otimes _{K}B_{m}\ast H$ we get an isomorphism:

$(e,f)$($B_{n}\otimes _{K}B_{m})\ast (G\times H)(e,f)\cong e\otimes
f(B_{n}\ast G\otimes _{K}B_{m}\ast H)e\otimes f\cong e(B_{n}\ast G)e\otimes
_{K}f(B_{m}\ast H)f.$

Therefore ($B_{n}\otimes _{K}B_{m})^{G\times H}\cong B_{n}^{G}\otimes
_{K}B_{m}^{H}.$

It follows $(e,f)$($B_{n+m})\ast (G\times H)(e,f)\cong B_{n+m}^{G\times
H}\cong (e\otimes f)(B_{n}\ast G\otimes _{K}B_{m}\ast H)(e\otimes
f)/(Z\otimes 1-1\otimes Z)$ $\cong e(B_{n}\ast G)e\otimes _{K}f(B_{m}\ast
H)f/(Z\otimes 1-1\otimes Z)$.

From this we have the isomorphism of algebras:

$B_{n+m}^{G\times H}\cong B_{n}^{G}\otimes _{K}B_{m}^{H}$ $/(Z\otimes 1$- $%
1\otimes Z)B_{n}^{G}\otimes _{K}B_{m}^{H}$.
\end{proof}

For Weyl algebras we have the following analogous of the previous theorem.

\begin{theorem}
Given finite groups of automorphisms $G$, $H$ of the Weyl algebras $A_{n}$
and $A_{m}$, $G\times H$ acts as a group of automorphisms of $A_{n+m}$ and
there is an isomorphism of $K$-algebras, $A_{n+m}^{G\times H}\cong
A_{n}^{G}\otimes _{K}A_{m}^{H}$, where $A_{n+m}^{G\times H}$, $A_{n}^{G}$, $%
A_{m}^{H}.$
\end{theorem}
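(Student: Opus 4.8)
The plan is to mirror the proof of the preceding Corollary, but the situation is in fact simpler: for the Weyl algebras the tensor decomposition $A_{n+m}\cong A_{n}\otimes_{K}A_{m}$ is a genuine isomorphism of $K$-algebras (recalled earlier from $[\mathrm{Co}]$), with no analogue of the central element $Z$ to be quotiented out. First I would fix this isomorphism and use it to transport the two given actions: for $\sigma\in G$ acting on $A_{n}$ and $\tau\in H$ acting on $A_{m}$, the assignment $(\sigma,\tau)(a\otimes a')=\sigma(a)\otimes\tau(a')$ defines an automorphism of $A_{n}\otimes_{K}A_{m}\cong A_{n+m}$, and $(\sigma,\tau)=1$ forces $\sigma=1$ and $\tau=1$, exactly as in the diagram preceding Proposition 2. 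Hence $G\times H$ acts faithfully as a group of automorphisms of $A_{n+m}$, which establishes the first assertion.

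Next I would invoke the skew-group-algebra tensor identity in the form $(A_{n}\otimes_{K}A_{m})\ast(G\times H)\cong A_{n}\ast G\otimes_{K}A_{m}\ast H$. This is the content of Theorem 3. Although that theorem is phrased for graded algebras, its proof only uses the chain of vector-space isomorphisms $\Lambda\otimes_{K}\Gamma\ast(G\times H)\cong\Lambda\otimes_{K}\Gamma\otimes_{K}K(G\times H)\cong\cdots\cong\Lambda\ast G\otimes_{K}\Gamma\ast H$ together with Proposition 2 and the verification that multiplication is respected; none of these steps uses the grading. The one point I must check with care, and which I regard as the main obstacle, is precisely that Theorem 3 applies to the (merely filtered, not graded) Weyl algebras. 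I would remark that the proof there is purely algebraic and goes through verbatim once ``graded'' is dropped, so the identity holds as an isomorphism of ordinary $K$-algebras.

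Finally I would extract the invariants by the idempotent method used in the Corollary. Set $e=\tfrac{1}{|G|}\sum_{g\in G}g\in KG$ and $f=\tfrac{1}{|H|}\sum_{h\in H}h\in KH$; these are idempotents since $\operatorname{char}K=0$, and one has $A_{n}^{G}\cong e(A_{n}\ast G)e$ and $A_{m}^{H}\cong f(A_{m}\ast H)f$. Under the isomorphism $K(G\times H)\cong KG\otimes_{K}KH$ of Proposition 2, the averaging idempotent $(e,f)=\tfrac{1}{|G\times H|}\sum_{(g,h)}(g,h)$ corresponds to $e\otimes f$. Combining these ingredients gives
\[
A_{n+m}^{G\times H}\cong (e,f)\bigl(A_{n+m}\ast(G\times H)\bigr)(e,f)\cong(e\otimes f)\bigl(A_{n}\ast G\otimes_{K}A_{m}\ast H\bigr)(e\otimes f)\cong e(A_{n}\ast G)e\otimes_{K}f(A_{m}\ast H)f\cong A_{n}^{G}\otimes_{K}A_{m}^{H},
\]
which is the desired isomorphism. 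In contrast to the $B_{n}$ case, no exact sequence or quotient by $(Z\otimes 1-1\otimes Z)$ enters, so once the applicability of Theorem 3 is settled the remaining steps are entirely formal.
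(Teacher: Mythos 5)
Your proof is correct and follows exactly the route the paper intends: the paper states this theorem without proof, merely calling it the ``analogous of the previous theorem,'' and your argument is precisely the preceding Corollary's idempotent argument transported through the classical isomorphism $A_{n+m}\cong A_{n}\otimes_{K}A_{m}$, with the quotient by $(Z\otimes 1-1\otimes Z)$ dropped. Your observation that Theorem 3 (the identity $(\Lambda\otimes_{K}\Gamma)\ast(G\times H)\cong\Lambda\ast G\otimes_{K}\Gamma\ast H$) does not actually use the grading is the one point the paper leaves implicit, and you address it correctly.
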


\section{The Weyl algebras $B_{1}$, and $A_{1}$ and their skew group
algebras $B_{1}\ast G$ and $A_{1}\ast G$, with $G$ a finite subgroup of $%
Sl(2,C)$.}

In this section we describe the basic algebras Morita equivalent to $%
B_{1}\ast G$ and $A_{1}\ast G$ by quivers and relations. To achieve this we
will make use of the following result, $[$AR$]$, $[$L$]$, $[$C-B$]$:

\begin{theorem}
For any subgroup $G$ of $Sl(2,C)$ the skew group algebra \emph{C}$[X,Y]\ast
G $ is Morita equivalent to the preprojective algebra of an Euclidean
diagram.
\end{theorem}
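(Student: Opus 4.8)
The plan is to combine the McKay correspondence with a reduction by idempotents, reading off both the quiver and its relations from the representation theory of $G$ acting on the natural module $V=\mathbb{C}^{2}$. Write $S(V)=\mathbb{C}[X,Y]$ for the symmetric algebra, so that $A:=\mathbb{C}[X,Y]\ast G=\bigoplus_{d\geq 0}S^{d}(V)\ast G$ is graded by symmetric-power degree. Two features of the inclusion $G\subset Sl(2,\mathbb{C})$, not available for general $G\subset Gl(2,\mathbb{C})$, will drive the whole argument: the module $V$ is self-dual, $V\cong V^{\ast}$, and $\Lambda^{2}V\cong\mathbb{C}$ is the trivial module, equivalently $V$ carries a $G$-invariant symplectic form $\omega$.

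First I would fix the quiver. Let $\rho_{0}=\mathbb{C},\rho_{1},\dots,\rho_{r}$ be the irreducible $G$-modules and form the McKay quiver, whose vertices are $0,1,\dots,r$ and which has $\dim\operatorname{Hom}_{G}(\rho_{i},V\otimes\rho_{j})$ arrows from $i$ to $j$. Self-duality of $V$ gives $\dim\operatorname{Hom}_{G}(\rho_{i},V\otimes\rho_{j})=\dim\operatorname{Hom}_{G}(\rho_{j},V\otimes\rho_{i})$, so the arrows pair off and the McKay quiver is the double $\overline{\Delta}$ of a graph $\Delta$; the classical McKay theorem identifies $\Delta$ with an extended Dynkin (Euclidean) diagram of type $\widetilde{A}$, $\widetilde{D}$ or $\widetilde{E}$. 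This is the diagram named in the statement. Next I would reduce to a basic algebra. Since $K$ has characteristic zero, $\mathbb{C}G$ is semisimple; choose primitive orthogonal idempotents $e_{0},\dots,e_{r}\in\mathbb{C}G$, one in each block, and set $e=e_{0}+\cdots+e_{r}$. Because $e$ meets every simple $\mathbb{C}G$-module, one has $AeA=A$, so $A$ is Morita equivalent to $B:=eAe$. The grading descends to $B$, and I would compute it degree by degree: in degree $0$, $e(\mathbb{C}G)e\cong\mathbb{C}^{r+1}$ supplies the vertices; in degree $1$, $e_{i}(V\ast G)e_{j}\cong\operatorname{Hom}_{G}(\rho_{i},V\otimes\rho_{j})$ supplies exactly the arrows of $\overline{\Delta}$, giving a surjection from the path algebra $K\overline{\Delta}$ onto $B$.

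The crux is the identification of relations, and this is where I expect the main difficulty. The symmetric algebra is $S(V)=T(V)/(\Lambda^{2}V)$, so the single quadratic relation cutting $A$ out of the tensor construction is ``$\Lambda^{2}V=0$''. I would use the invariant symplectic form, i.e. the isomorphism $\omega\colon\Lambda^{2}V\to\mathbb{C}$, to pin down for each arrow $a\colon i\to j$ a canonical reverse arrow $a^{\ast}\colon j\to i$; this is precisely where $G\subset Sl$ rather than $Gl$ is used, since $\omega$ exists only because $\det=1$. Since $\Lambda^{2}V\cong\rho_{0}$ is trivial, one has $\operatorname{Hom}_{G}(\rho_{i},\Lambda^{2}V\otimes\rho_{i})\cong\mathbb{C}$, so the relation space is one-dimensional at each vertex. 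Transporting the relation $\Lambda^{2}V=0$ through the Morita equivalence and expressing it in these $\omega$-adapted bases, it becomes, at each vertex $i$, the preprojective relation $\sum_{a}(aa^{\ast}-a^{\ast}a)=0$.

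The delicate points are: to choose the bases of $\operatorname{Hom}_{G}(\rho_{i},V\otimes\rho_{j})$ and $\operatorname{Hom}_{G}(\rho_{j},V\otimes\rho_{i})$ so that the pairing induced by $\omega$ is the standard one making $a\mapsto a^{\ast}$ an involution; and then to check that the image of the relation space is generated by these commutator sums and by nothing more, so that $B\cong\Pi(\Delta)$ as graded algebras rather than merely admitting a surjection from $K\overline{\Delta}$. Granting this, $\mathbb{C}[X,Y]\ast G$ is Morita equivalent to $B\cong\Pi(\Delta)$, the preprojective algebra of the Euclidean diagram $\Delta$, as asserted.
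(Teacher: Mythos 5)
Your argument is correct in outline, but it is a genuinely different route from the one the paper takes. The paper never touches the symplectic form on $V$: it passes to the Koszul dual, i.e.\ to the skew group algebra $\Lambda \ast G$ of the exterior algebra $\Lambda = C\langle X,Y\rangle/\{X^{2},Y^{2},XY+YX\}$, shows (via [RR] and [GMT]) that $e(\Lambda\ast G)e$ is selfinjective of radical cube zero with simple socles and identity Nakayama permutation, reads off its relations from the simple-socle condition ($\alpha^{-1}\alpha = c\,\beta^{-1}\beta$ for arrows with common source, rescaled to $c=1$ when the underlying diagram is a tree), and then recovers $e(C[X,Y]\ast G)e$ as the quadratic dual, whose relations are exactly the mesh relations $\sum_{j}\alpha_{j}^{-1}\alpha_{j}$. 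You instead work directly on the symmetric-algebra side, presenting $C[X,Y]\ast G$ as $T(V)\ast G/(\Lambda^{2}V)$ and using the $G$-invariant form $\omega$ (this is where $\det=1$ enters for you, whereas the paper uses $\det=1$ through Steinberg's no-loop argument and the structure of $\Lambda\ast G$) to produce the involution $a\mapsto a^{\ast}$ and to identify the one-dimensional relation space $\operatorname{Hom}_{G}(\rho_{i},\Lambda^{2}V\otimes\rho_{i})$ at each vertex with the commutator sum. Each approach buys something: yours treats the cyclic ($\widetilde{A}$) case uniformly, since the symplectic pairing furnishes canonical dual bases of $\operatorname{Hom}_{G}(\rho_{i},V\otimes\rho_{j})$ and $\operatorname{Hom}_{G}(\rho_{j},V\otimes\rho_{i})$ without any tree hypothesis --- precisely the case the paper admits it must "consider separately" because its rescaling argument fails there; the paper's route, in exchange, delivers the Yoneda algebra $\Lambda\ast G$ and the Koszul-duality picture, which it reuses heavily in the next section to handle $B_{1}\ast G$ and $A_{1}\ast G$. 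The two delicate points you flag (normalizing the bases so that the $\omega$-pairing is standard and signs come out as $aa^{\ast}-a^{\ast}a$; checking the kernel of $K\overline{\Delta}\twoheadrightarrow e(C[X,Y]\ast G)e$ is generated in degree two by exactly these elements, which follows from quadraticity of $S(V)\ast G$ and the ideal correspondence under $AeA=A$) are real but standard computations, so I regard your proposal as a sound alternative proof rather than a gap.
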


We will end the section sketching the situation for a product $G=G_{1}\times
G_{2}\times ...G_{n}$ of finite subgroups $G_{i}$ of $Sl(2,C)$ acting on $%
B_{n}$ and on $A_{n}$.

We start by recalling the situation of \emph{C}$[X,Y]\ast G$, following the
approach of $[$GuM$]$ and take the opportunity to correct some inaccuracies.

Recall the construction of a McKay quiver of a finite subgroup $G$ of the
linear group $Gl(n,C)$. [ Mc]

Let $S_{1}$, $S_{2},...S_{n}$ be the non isomorphic irreducible
representations of $G$ and $M$ the representation corresponding to the
inclusion of $G$ in $Gl(n,C)$. Tensoring $M$ with some $S_{j}$ we obtain a
decomposition in irreducible representations: $M\otimes _{K}S_{j}\cong 
\underset{i}{\oplus }a_{ij}S_{i}$.

The McKay quiver of $G$ has vertices $v_{1}$, $v_{2}$,... $v_{n}$, with each
vertex $v_{i}$ corresponding to an irreducible representation $S_{i}$ and we
put $a_{ij}$ arrows from $v_{i}$ to $v_{j}$.

For the proof we will make use of the following well known result from
ordinary group representations:

\begin{theorem}
$[$Mu$]$ Let $G$ be a finite group and $L$ a complex irreducible
representation. Then the dimension of $L$ as $C$-vector space divides the
order of $G$.
\end{theorem}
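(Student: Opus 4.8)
The statement is the classical theorem of Frobenius asserting that $\dim_{\mathbb{C}} L$ divides $|G|$, and the plan is to prove it by the standard character-theoretic argument resting on algebraic integers. First I would recall the two facts about algebraic integers that drive the proof: the algebraic integers form a subring of $\mathbb{C}$, and a rational number that is an algebraic integer is an ordinary integer. Writing $\chi$ for the character of $L$ and $d=\chi(1)=\dim_{\mathbb{C}}L$, I would observe that every value $\chi(g)$ is a sum of eigenvalues of the acting operator, each a root of unity since $g$ has finite order; hence each $\chi(g)$ is an algebraic integer.

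Next I would pass to the center of the group algebra $\mathbb{C}G$. Let $C_{1},\dots,C_{s}$ be the conjugacy classes, pick representatives $g_{i}\in C_{i}$, and form the class sums $K_{i}=\sum_{g\in C_{i}}g$, which constitute a $\mathbb{C}$-basis of $Z(\mathbb{C}G)$ and satisfy $K_{i}K_{j}=\sum_{k}a_{ijk}K_{k}$ with nonnegative integers $a_{ijk}$ (the structure constants simply count group elements). Since $L$ is irreducible, Schur's lemma forces each $K_{i}$ to act on $L$ as a scalar $\omega_{i}$, and comparing traces gives $\omega_{i}=|C_{i}|\chi(g_{i})/d$. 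The multiplicative relations among the $K_{i}$ transfer to $\omega_{i}\omega_{j}=\sum_{k}a_{ijk}\omega_{k}$, so the $\omega_{i}$ span a subring of $\mathbb{C}$ that is finitely generated as a $\mathbb{Z}$-module; consequently each $\omega_{i}$ is an algebraic integer.

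Finally I would combine these ingredients with the first orthogonality relation. The identity $\sum_{g\in G}\chi(g)\overline{\chi(g)}=|G|$ regroups over conjugacy classes as $\sum_{i}|C_{i}|\chi(g_{i})\overline{\chi(g_{i})}=|G|$; dividing through by $d$ yields $|G|/d=\sum_{i}\omega_{i}\overline{\chi(g_{i})}$. The right-hand side is a sum of products of algebraic integers, since $\overline{\chi(g_{i})}=\chi(g_{i}^{-1})$ is again a sum of roots of unity, and is therefore an algebraic integer. Thus $|G|/d$ is a rational algebraic integer, hence an ordinary integer, which is exactly the assertion that $d$ divides $|G|$.

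I expect the technical heart — the step most worth spelling out — to be the claim that the central scalars $\omega_{i}$ are algebraic integers. The cleanest route is the observation that the $\mathbb{Z}$-span of $\{\omega_{1},\dots,\omega_{s}\}$ is closed under multiplication by virtue of the integer structure constants $a_{ijk}$, so it is a finitely generated $\mathbb{Z}$-module and each of its elements is integral over $\mathbb{Z}$; equivalently, each $\omega_{i}$ is an eigenvalue of the integer matrix representing left multiplication by $K_{i}$ on the lattice $\bigoplus_{k}\mathbb{Z}K_{k}$, and such eigenvalues are roots of monic integer polynomials. Everything else is bookkeeping with characters and orthogonality.
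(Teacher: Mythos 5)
Your proof is correct: it is the classical Frobenius argument via algebraic integers (class sums act as scalars $\omega_i=|C_i|\chi(g_i)/d$ by Schur's lemma, the integer structure constants force the $\omega_i$ to be algebraic integers, and the first orthogonality relation then exhibits $|G|/d$ as a rational algebraic integer, hence an integer). There is nothing in the paper to compare it against: the paper states this theorem only as a known result imported from the reference $[$Mu$]$ and gives no proof of its own, using it solely as an ingredient in the subsequent argument that the McKay quiver of a finite subgroup of $Sl(2,C)$ has no loops. The one step worth making fully explicit in your write-up is the integrality of the $\omega_i$; your observation that each $\omega_i$ is an eigenvalue of the integer matrix of multiplication by $K_i$ on the lattice $\bigoplus_k \mathbb{Z}K_k$ (equivalently, that the $\mathbb{Z}$-span of the $\omega_i$, which contains $\omega_1=1$ coming from the class of the identity, is a finitely generated $\mathbb{Z}$-module closed under multiplication) settles this cleanly.
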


\bigskip We reproduce here the proof given in $[$Ste$]$ of the following:

\begin{theorem}
Let $M$ be a $C$-vector space of dimension $2$ and $G$ a finite subgroup of
the special linear group $Sl(2,M).$Then the McKay quiver has no loops.
\end{theorem}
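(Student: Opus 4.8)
The plan is to translate the combinatorial statement ``no loops'' into a statement about multiplicities of irreducible representations, and then to produce an obstruction to the relevant multiplicity being positive. By construction a loop at the vertex $v_{i}$ means exactly that $a_{ii}>0$, i.e. that $S_{i}$ occurs in the decomposition $M\otimes _{\mathbb{C}}S_{i}\cong \bigoplus _{\ell }a_{\ell i}S_{\ell }$. So the entire theorem reduces to one assertion: for every irreducible representation $S$ of $G$, the multiplicity of $S$ in $M\otimes S$ is zero. First I would record, by semisimplicity and Schur's lemma, that this multiplicity equals $\dim \mathrm{Hom}_{G}(M\otimes S,S)=\dim \mathrm{Hom}_{G}(M,\mathrm{End}_{\mathbb{C}}(S))$, so that $a_{SS}$ is just the multiplicity of the natural representation $M$ inside $\mathrm{End}_{\mathbb{C}}(S)=S\otimes S^{\ast }$; equivalently $a_{SS}=\langle \chi _{M}\chi _{S},\chi _{S}\rangle $. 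I would also note the structural features of $M$ coming from $G\subseteq Sl(2,M)$: since $\det \rho _{M}(g)=1$ we have $\wedge ^{2}M\cong \mathbb{C}$ trivially, so $M\cong M^{\ast }$ and $\chi _{M}$ is real (this is what makes the McKay quiver a \emph{double} quiver), and every $\rho _{M}(g)$ is conjugate to $\mathrm{diag}(\varepsilon ,\varepsilon ^{-1})$ with $\varepsilon $ a root of unity, so $\chi _{M}(g)=\varepsilon +\varepsilon ^{-1}$.

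The cleanest obstruction comes from the central scalar $-I$. After conjugating $G$ into $SU(2)$, consider whether $-I\in G$. If it is, then $-I$ is central, so by Schur's lemma it acts on each irreducible $S_{i}$ by a scalar $\eta _{i}\in \{\pm 1\}$, while on $M$ it acts by the scalar $-1$ (its matrix is $-I$, of trace $-2$). Hence on $M\otimes S_{i}$ it acts by $-\eta _{i}$, and every irreducible constituent $S_{\ell }$ of $M\otimes S_{i}$ satisfies $\eta _{\ell }=-\eta _{i}$. Since $\eta _{i}\neq -\eta _{i}$, the representation $S_{i}$ itself cannot be such a constituent, so $a_{ii}=0$ and there is no loop at $v_{i}$. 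Conceptually this $\mathbb{Z}/2$-grading of the vertex set by the sign $\eta _{i}$ says that the McKay quiver is bipartite, with every arrow crossing between the two parts.

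The remaining case $-I\notin G$ is where the argument must change, because the grading no longer applies (for the odd cyclic groups the McKay quiver is an odd affine cycle $\widetilde{A}_{2k}$, which is not bipartite). Here I would use that a finite subgroup of $SU(2)$ not containing $-I$ maps injectively into $SO(3)=SU(2)/\{\pm I\}$ and splits the binary central extension over its image; since the binary dihedral and binary polyhedral extensions are non-split, such a $G$ must be abelian (indeed cyclic of odd order), by the classification of finite subgroups of $SU(2)$. For abelian $G$ every $S_{i}$ is one-dimensional, so $|\chi _{i}(g)|=1$ for all $g$ and the multiplicity collapses to $a_{ii}=\langle \chi _{M},\mathbf{1}\rangle $, the multiplicity of the trivial representation in $M$. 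This is zero for nontrivial $G$: a trivial summand of $M$ would split off, by Maschke, a complementary $G$-stable line on which $G$ acts through a character of determinant one, forcing that character, and hence all of $G$, to be trivial.

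The step I expect to be the genuine obstacle is this last case $-I\notin G$: isolating it and justifying that it forces $G$ to be abelian rests on the classification of finite subgroups of $SU(2)$ (equivalently, on the non-splitting of the binary extensions), and it is exactly the case that the clean central-element grading misses. If one wished to avoid invoking the classification, the alternative is to prove the uniform identity $\sum _{i}a_{ii}=\sum _{c}\chi _{M}(g_{c})$, the sum over conjugacy classes, using column orthogonality in the form $\sum _{i}|\chi _{i}(g)|^{2}=|C_{G}(g)|$, and then to show that this sum of traces over conjugacy classes vanishes; combined with $a_{ii}\geq 0$ this would give all $a_{ii}=0$ at once. However, proving that vanishing directly appears to be no easier than the case analysis above, so I would retain the two-case argument and treat the abelian (non-central) case as the hard residual point.
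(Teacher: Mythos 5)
Your proof is correct, and it uses the same two underlying mechanisms as the paper's (Steinberg's) argument --- the $\pm 1$-grading induced by a central involution, and the determinant-one constraint on characters in the abelian case --- but it organizes them around a different case split. The paper divides according to whether $M$ is irreducible: in the irreducible case it must first \emph{manufacture} the central involution, using the divisibility theorem to get $2\mid |G|$ and then Sylow to produce an order-two element, which is forced to equal $-I$ and hence to be central; your Case 1 instead takes $-I\in G$ as the hypothesis and runs the same grading obstruction directly, which is cleaner and covers strictly more groups (every even-order subgroup, not only those acting irreducibly). The price appears in your Case 2: where the paper's reducible case obtains cyclicity elementarily (every $g$ preserves the two invariant lines, so $G$ is simultaneously diagonal and $g\mapsto\lambda_{1}(g)$ embeds $G$ into the roots of unity), you invoke the classification of finite subgroups of $SU(2)$ (equivalently the non-splitness of the binary extensions) to conclude that $G$ is cyclic of odd order. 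That input is correct but heavier than needed, and it can be replaced by the paper's own chain: $-I\notin G$ forces $|G|$ odd (any involution in $Sl(2,\mathbb{C})$ is $-I$, exactly the paper's $\lambda^{2}=\lambda'^{2}=1$, $\mu=0$ computation), odd order plus the divisibility theorem forces $M$ reducible, and reducibility gives simultaneous diagonalization and cyclicity. Your character-theoretic finish in the abelian case, $a_{ii}=\langle\chi_{M},\mathbf{1}\rangle=\dim M^{G}=0$ because a trivial summand would force the complementary determinant-one character, hence all of $G$, to be trivial, is a tidier version of the paper's explicit computation with the basis vectors $m_{1}\otimes s$. Both your proof and the paper's implicitly exclude the trivial group, for which the statement fails; you flag this with ``nontrivial $G$,'' so there is no gap.
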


\begin{proof}
For the proof of the theorem we have two cases:

1) $M$ is an irreducible representation.

According to the previous theorem, $2$ divides the order of $G$ and by Sylow
theorems, there exists an element $g\in G$ of order $2.$ Since $g\in $ $%
sl(2,M)$, by a change of basis $PgP^{-1}=\left[ 
\begin{array}{cc}
\lambda & 0 \\ 
\mu & \lambda ^{\prime }%
\end{array}%
\right] $, with $\lambda \lambda ^{\prime }=1$ , and $\left[ 
\begin{array}{cc}
\lambda & 0 \\ 
\mu & \lambda ^{\prime }%
\end{array}%
\right] \left[ 
\begin{array}{cc}
\lambda & 0 \\ 
\mu & \lambda ^{\prime }%
\end{array}%
\right] =\left[ 
\begin{array}{cc}
\lambda ^{2} & 0 \\ 
\lambda \mu +\lambda ^{\prime }\mu & \lambda ^{\prime 2}%
\end{array}%
\right] =1$

Therefore: $\lambda ^{2}=1=(\lambda ^{\prime })^{2}$ and either $\lambda
=\lambda ^{\prime }=1$ or $\lambda =\lambda ?=-1$, in any case $\mu =0$ and $%
g$ is in the center of $G.$

Let $S$ be another irreducible representation of $G$ and $\varphi :$ $%
CG\rightarrow S$ an epimorphism.

Let%
\'{}%
s suppose $g-1$ is not in the kernel of $\varphi $. Then there exists $%
s^{\prime }\in S$ with $(g-1)s^{\prime }\neq 0$. Since $S$ is simple, for
any $s\in S$, $s\neq 0$, there exists $\rho \in $ $CG$ such that $\rho
(g-1)s^{\prime }=s$ and $\rho (g-1)=(g-1)\rho $. It follows $%
(g+1)s=(g+1)(g-1)\rho s^{\prime }=(g^{2}-1)\rho s^{\prime }=0$ and $gs=-s.$

This means that $g$ acts on $S$ either as the identity or as $-1$.

If $g$ acts as $-1$ on $M$, and as $1$ on $S$, then $g$ acts as -$1$ on $%
M\otimes _{K}S$ and $S$ can not appear as a summand of $M\otimes _{K}S$ and
if $g$ acts as $-1$ on $M$ and as $-1$ on $S$, then it acts as $1$ on $%
M\otimes _{K}S$ and again $S$ can not be a summand of $M\otimes _{K}S$.

We have proved that in this case the McKay quiver has no loops.

2) The representation $M$ is reducible, this is: $M=M_{1}\oplus M_{2}$ and $%
\dim _{C}M_{1}=\dim _{C}M_{2}=1$. Let%
\'{}%
s say that $M_{1}$ is generated by $m_{1}$and $M_{2}$ is generated by $m_{2}$%
. If the order of $G$ is $n$, then for any $g\in G$, $g(m_{1})=\lambda
_{1}m_{1}$ and $g(m_{2})=\lambda _{2}m_{2}$ with $\lambda _{1}$and $\lambda
_{2}$ n-th roots of unity.

The element $g$ has form: $g=\left[ 
\begin{array}{cc}
\lambda _{1} & 0 \\ 
0 & \lambda _{2}%
\end{array}%
\right] $, with $\lambda _{1}$. $\lambda _{2}$=$1$.

There exists an injective homomorphism from $G$ to the group of nth roots of
unity, $\sqrt[n]{1}$ given by $g\rightarrow \lambda _{1}$, hence $G$ is
cyclic and all irreducible representations have dimension one.

Let $Cs$ $=S$ be an irreducible representation $G=<g>$ and $gs=ts.$

Suppose $M\otimes _{K}S=S_{1}\oplus S_{2}$. Then $M_{1}\otimes S=S_{1}$ and $%
M_{2}\otimes S=S_{2}$.

Assume $S=S_{1}$, then $m_{1}\otimes s=s^{\prime }$, $s^{\prime }\in S$ and $%
s^{\prime }=rs$. It follows $g(m_{1}\otimes s)=g(m_{1})\otimes g(s)=\lambda
_{1}m_{1}\otimes ts=g(rs)=rts=t\lambda _{1}(m_{1}\otimes s)=t\lambda _{1}rs.$%
Therefore $\lambda _{1}=1$ and $g$ is the identity.

We have proved the McKay quiver has no loops.
\end{proof}

We sketch here the arguments used in [GuM] to prove the theorem.

Let $G$ be a finite subgroup of $Sl(2,C)$, which extends to a group of grade
preserving automorphisms of $C[X,Y]$. The algebra $C[X,Y]$ is Koszul with
Yoneda algebra the exterior algebra $\Lambda =C<X,Y>/\{X^{2},Y^{2}$, $%
XY+YX\} $, it was proved in $[$MV1$]$ that $G$ acts in a natural way as an
automorphism group of the Yoneda algebra, hence $G$ is a sub group of the
automorphisms group of $\Lambda $.

It was also proved in $[$MV1$]$, that the skew group algebra $C[X,Y]\ast G$
is Koszul with Yoneda algebra $\Lambda \ast G.$

The algebra $\Lambda $ is selfinjective of radical cube zero. It follows
from $[$RR$]$, $\Lambda \ast G$ is selfinjective of radical cube zero. Hence 
$C[X,Y]\ast G$ is Artin-Schelter regular of global dimension two [Sm],[MV4].
It is also clear that $C[X,Y]\ast G$ is noetherian.

Consider now an arbitrary Koszul $C$-algebra $\Lambda $ and $G$ a finite
group of automorphisms of $\Lambda $. Given a complete set of primitive
orthogonal idempotents $e_{1}$, $e_{2}$,... $e_{n}$ of $CG$, they are also a
complete set of orthogonal idempotents of $\Lambda \ast G$ and taking $e=%
\overset{n}{\underset{i=1}{\sum }}e_{i}$, the algebra $e(\Lambda \ast G)e$
is basic and Morita equivalent to $\Lambda \ast G$.

There is a natural isomorphism:

\begin{center}
$Ext_{\Lambda \ast G}^{k}(CGe,CGe)\cong eExt_{\Lambda \ast G}^{k}(CG,CG)e$
\end{center}

By the Morita theorems applied to graded algebras [MV4], the functor $%
Hom_{\Lambda \ast G}(\Lambda \ast Ge,-)$ induces an equivalence of
categories $Gr_{\Lambda \ast G}\cong Gr_{e\Lambda \ast Ge}$ .

It follows that for each $k\geq 0$, there is an isomorphism:

\begin{center}
$Ext_{\Lambda \ast G}^{k}(CGe,CGe)\cong Ext_{e\Lambda \ast
Ge}^{k}(eCGe,eCGe).$
\end{center}

Using this two isomorphisms and adding up, we get an isomorphism of graded $%
K $-algebras.

\begin{center}
$e(\underset{k\geq 0}{\oplus }Ext_{\Lambda \ast G}^{k}(CG,CG))e\cong 
\underset{k\geq 0}{\oplus }Ext_{e\Lambda \ast Ge}^{k}(eCGe,eCGe)$
\end{center}

In particular when $\Lambda $ is the exterior algebra in two generators we
have:

\begin{center}
$C[X,Y]\ast G\cong \underset{k\geq 0}{\oplus }Ext_{\Lambda \ast
G}^{k}(CG,CG) $ and $e(C[X,Y]\ast G)e\cong \underset{k\geq 0}{\oplus }%
Ext_{e\Lambda \ast Ge}^{k}(eCGe,eCGe).$
\end{center}

The algebra $e(\Lambda \ast G)e$ is basic Koszul selfinjective of radical
cube zero with Yoneda algebra the basic noetherian algebra $e(C[X,Y]\ast G)e$%
.

It follows from [GMT] that the separated quiver of $e(\Lambda \ast G)e$ is
an Euclidean diagram $Q.$

It is easy to check that the quiver of $e(C[X,Y]\ast G)e$ is the McKay
quiver of $G$, by Theorem ?, this quiver does not have loops. By the
properties of Koszul algebras $e(C[X,Y]\ast G)e$ and $e(\Lambda \ast G)e$
have the same quiver $\overset{\wedge }{Q}$ .

We know by $[GMT]$ $\overset{\wedge }{Q}$ is a translation quiver with
translation $\tau $ the Nakayama permutation, but $soc(\Lambda \ast
G)e_{i}=J^{2}\ast Ge_{i}$ , and since $\Lambda $ has simple socle $J^{2}=K$.
Then $soc(\Lambda \ast G)e_{i}=KGe_{i}=top(\Lambda \ast G)e_{i}$ and $\tau $
is the identity.

The quiver $\overset{\wedge }{Q}$ is the complete quiver of an Euclidean
diagram, this means $\overset{\wedge }{Q}_{0}=Q_{0}$ and $\overset{\wedge }{Q%
}_{1}=Q_{i}\cup Q_{1}^{op}$, with $Q$ an Euclidean diagram. For each arrow $%
\alpha :i\rightarrow j$ in $\overset{\wedge }{Q}$ we have an arrow $\alpha
^{-1}:j\rightarrow i$ in $\overset{\wedge }{Q:}%
\begin{array}{ccccc}
&  & j &  &  \\ 
& \alpha \nearrow &  & \searrow \alpha ^{-1} &  \\ 
i &  &  &  & i \\ 
& \beta \searrow &  & \nearrow \beta ^{-1} &  \\ 
&  & k &  & 
\end{array}%
.$

Since $(\Lambda \ast G)e_{i}$ has simple socle, for any pair of arrows $%
\alpha :i\rightarrow ,\beta :i\rightarrow k$, there is a non zero $c\in K$,
such that $\alpha ^{-1}\alpha =c\beta ^{-1}\beta $. If we assume $Q$ is a
tree then we can change the maps $b:(\Lambda \ast G)e_{i}\rightarrow
(\Lambda \ast G)e_{j}$ corresponding to the arrow $\beta $ by $cb$ and we
get an arrow which we denote again by $\beta $ such that $\alpha ^{-1}\alpha
=\beta ^{-1}\beta $ and we obtain an isomorphism $e(\Lambda \ast G)e\cong K$ 
$\overset{\wedge }{Q}/I$, where $I$ is the ideal generated by relations: $%
\alpha ^{-1}\alpha -\beta ^{-1}\beta $ , $\alpha \alpha ^{-1}-\beta \beta
^{-1}$ and $\alpha \delta $ if $\delta \neq \alpha ^{-1}$, $\delta \alpha $
if $\delta \neq $ $\alpha ^{-1}$.

From this is clear that $e(C[X,Y]\ast G)e\cong K$ $\overset{\wedge }{Q}%
/I^{\bot }$, where $I^{\bot }$ is the ideal generated by mesh relations: $%
\begin{array}{ccccc}
&  & j_{1} &  &  \\ 
& \alpha _{1}\nearrow &  & \searrow \alpha _{1}^{-1} &  \\ 
i & \rightarrow & j_{2} & \rightarrow & i \\ 
& \alpha _{k}\searrow &  & \nearrow \alpha _{k}^{-1} &  \\ 
&  & j_{k} &  & 
\end{array}%
$, this is $\overset{k}{\underset{i=1}{\sum }}$ $\alpha _{i}^{-1}\alpha
_{i}\in I^{\bot }.$

We have proved $e(C[X,Y]\ast G)e$ is isomorphic to the preprojective algebra.

The case $\overset{\sim }{A}_{n}$is the skew group algebra corresponding to
the cyclic group $Z_{n}$ and has to be considered separately, since for $%
\overset{\sim }{A}_{n}$it is not clear that we could choose the arrows in
the algebra $e(\Lambda \ast G)e$ in such a way that for any pair of arrows $%
\alpha ,\beta $ with the same origin, $\alpha ^{-1}\alpha =\beta ^{-1}\beta $%
.

We also need to prove that all preprojective algebras appear in this way.

A full description of the quiver of a preprojective algebra and its
relations with the McKay graph for finite subgroups of $Sl(2,C)$ has
appeared in several papers by authors like: Crawley-Boevey or Lenzing [L],
[C-B].

We now consider finite groups of automorphisms $G$ of the homogenized Weyl
algebra $B_{n}$ such that for all $\sigma \in G$, $\sigma (Z)=Z$ and for all 
$1\leq i\leq n$, $\sigma (X_{i})$, $\sigma (Y_{i})\in \underset{i=1}{\overset%
{n}{\oplus }}CX_{i}\oplus \underset{i=1}{\overset{n}{\oplus }}CY_{i}$.

Given a Koszul algebra $\Lambda $ with Yoneda algebra $\Gamma $ and a finite
group of automorphisms $G$ of $\Lambda $, we recall from $[$MV1$]$ , how the
action transfers to a group of automorphisms of $\Gamma $:

Let $M$ be a $\Lambda $-module and $\sigma \in G$, we define $M^{\sigma }$
as the module with $M^{\sigma }=M$ as vector space and multiplication given
as follows: for $\lambda \in \Lambda $, $m\in M^{\sigma }$we define $\lambda
\ast m=\lambda ^{\sigma }m.$

In case $M$ is a $G$-module, there is an isomorphism: $\varphi _{\sigma
}:M\rightarrow M^{\sigma }$given by $\varphi _{\sigma }(m)=\sigma m.$Then $%
\varphi _{\sigma }(\lambda m)=\sigma (\lambda m)=\lambda ^{\sigma }\sigma
m=\lambda \ast \varphi _{\sigma }(m)$.

Now given an extension $\delta \in Ext_{\Lambda }^{k}(S_{i},S_{j})$, with $%
S_{i},$ $S_{j}$ graded simple modules:

\begin{center}
$\delta :0\rightarrow S_{j}\rightarrow E_{1}\rightarrow E_{2}\rightarrow
...E_{k}\rightarrow S_{i}\rightarrow 0$
\end{center}

we define

\begin{center}
$\sigma (\delta ):0\rightarrow S_{j}^{\sigma }\rightarrow E_{1}^{\sigma
}\rightarrow E_{2}^{\sigma }\rightarrow ...E_{k}^{\sigma }\rightarrow
S_{i}^{\sigma }\rightarrow 0.$
\end{center}

It is clear that the modules S$_{j}^{\sigma }$,S$_{i}^{\sigma }$are again
graded simple and $\sigma $($\delta $)$\in $Ext$_{\Lambda }^{k}$(S$%
_{i}^{\sigma }$,S$_{j}^{\sigma }$).

In this way we have an isomorphism of $C$-algebras: $\sigma :\underset{k\geq
0}{\oplus }Ext_{\Lambda }^{k}(\Lambda /J,\Lambda /J)\rightarrow \underset{%
k\geq 0}{\oplus }Ext_{\Lambda }^{k}(\Lambda /J,\Lambda /J)$ given by $\sigma
(\delta _{1},\delta _{2},...\delta _{n})=(\sigma (\delta _{1}),\sigma
(\delta _{2}),...\sigma (\delta _{n}))$.

In the particular case $\Lambda =B_{n}$ and $\Gamma =B_{n}^{!}$ we want to
see that for any group of automorphisms $G$ of $B_{n}$ such that any element 
$\sigma $of $G$ has matrix form $\sigma =\left[ 
\begin{array}{cc}
A & 0 \\ 
0 & 1%
\end{array}%
\right] $, with $A$ a $n-1\times n-1$ matrix, the action of $G$ on $\left(
B_{n}^{!}\right) _{1}$ is such that every $\sigma \in G$ has matrix form $%
\sigma =\left[ 
\begin{array}{cc}
\ast & 0 \\ 
0 & 1%
\end{array}%
\right] $, with $\ast $ a $n-1\times n-1$ matrix.

The element $Z$ of $B_{n}^{!}$ corresponds to the extension:

\begin{center}
$%
\begin{array}{ccccccc}
0\rightarrow & J & \rightarrow & B_{n} & \rightarrow & C & \rightarrow 0 \\ 
& \downarrow &  & \downarrow &  & \downarrow 1 &  \\ 
0\rightarrow & J/J^{2} & \rightarrow & E & \rightarrow & C & \rightarrow 0
\\ 
& \downarrow &  & \downarrow &  & \downarrow 1 &  \\ 
0\rightarrow & C\overline{Z} & \rightarrow & C[Z]/(Z^{2}) & \rightarrow & C
& \rightarrow 0 \\ 
& \downarrow \cong &  & \downarrow &  & \downarrow 1 &  \\ 
0\rightarrow & C & \rightarrow & C[Z]/(Z^{2}) & \rightarrow & C & 
\rightarrow 0%
\end{array}%
$
\end{center}

The extension $\delta =Z:$ $0\rightarrow C\overset{\overline{z}}{\rightarrow 
}CZ/(Z^{2})\rightarrow C\rightarrow 0$ is such that both $C$ and $CZ/(Z^{2})$
are $G$-modules with trivial action. Then for any $\sigma \in G$, ($%
CZ/(Z^{2}))^{\sigma }=CZ/(Z^{2})$ and $C^{\sigma }=C$ . Moreover $\sigma
(\delta )=\delta $ . We have proved that in $B_{n}^{!}$, $\sigma (Z)=Z$ for
all $\sigma \in G$.

We have an isomorphism: $Ext_{B_{n}}^{1}(C,C)\cong Hom_{B_{n}}(J,C)\cong
Hom_{B_{n}}(J/J^{2},C)$, where $J/J^{2}\cong \underset{i=1}{\overset{n}{%
\oplus }}C\overline{X}_{i}\oplus \underset{i=1}{\overset{n}{\oplus }}C%
\overline{Y}_{i}\oplus C\overline{Z}$.

The element $X_{i}$ of $B_{n}^{!}$ corresponds to the map: $f:J\rightarrow
J/J^{2}\rightarrow C\overline{X}_{i}\overset{\cong }{\rightarrow }K$
applying the homomorphism $\sigma $we get a map: $f^{\sigma }:J^{\sigma
}\rightarrow (J/J^{2})^{\sigma }\rightarrow (C\overline{X}_{i})^{\sigma }%
\overset{\cong }{\rightarrow }C.$

But by hypothesis $(\overline{X}_{i})^{\sigma }=\sum A_{ij}X_{j}+\sum
B_{ij}Y_{j},$hence in the expansion of $f^{\sigma }$ does not appear $Z.$

For $Y_{i}\in B_{n}^{!}$ the situation is similar and the automorphism $%
\sigma $ of ($B_{n}^{!})_{1}$has form $\sigma =\left[ 
\begin{array}{cc}
\left[ \ast \right] & 0 \\ 
0 & 1%
\end{array}%
\right] $, with $\left[ \ast \right] $ a $n-1\times n-1$ matrix.

We want to use the previous remarks to describe $B_{1}^{!}\ast G$ for $G$ a
finite subgroup of $Sl(2,C)$ with action on $B_{1}$ as above.

Let $e_{1}$, $e_{2}$,... $e_{k}$ be a complete set of orthogonal idempotents
of $CG$ and $e=\overset{k}{\underset{i=1}{\sum }}e_{i}$, $B_{1}/ZB_{1}\cong
C_{1}$, the polynomial algebra in two variables.

We saw in previous section $eC_{1}\ast Ge$ is the preprojective algebra and
its Yoneda algebra is the selfinjective radical cube zero algebra $%
eC_{1}^{!}\ast Ge$, with $C_{1}^{!}$ the exterior algebra in two variables.
and for any pair of arrows $\alpha :i\rightarrow j$ , $\beta :i\rightarrow k$
in the quiver of $eC_{1}^{!}\ast Ge$ there exists arrows $\alpha
^{-1}:j\rightarrow i$ and $\beta ^{-1}:k\rightarrow i$ such that $\alpha
^{-1}\alpha =$ $\beta ^{-1}\beta $ and $\delta \alpha =0$ if $\delta $ is an
arrow different from $\alpha ^{-1}.$

We also saw in Section 1, that there is an isomorphism of $C_{n}^{!}$%
-bimodules: $B_{n}^{!}\cong C_{n}^{!}\oplus ZC_{n}^{!}.$

If we assume $G$ acts on $B_{n}$ in such a way that for $\sigma \in G$, $%
\sigma (Z)=Z$ and $\sigma (X_{i}),\sigma (Y_{i})$is contained in the vector
space generated by the $X%
{\acute{}}%
s$ and the $Y%
{\acute{}}%
s$, then $G$acts on the same way on $B_{n}^{!}$ and we have an isomorphism: $%
B_{n}^{!}\ast G\cong C_{n}^{!}\ast G\oplus ZC_{n}^{!}\ast G$ of $%
C_{n}^{!}\ast G$-bimodules.

which induces an isomorphism: $e(B_{n}^{!}\ast G)e\cong e(C_{n}^{!}\ast
G)e\oplus Ze(C_{n}^{!}\ast G)e$ of $e(C_{n}^{!}\ast G)e$-bimodules. In
particular, for $n=1$, $e(B_{1}^{!}\ast G)e\cong e(C_{1}^{!}\ast G)e\oplus
Ze(C_{1}^{!}\ast G)e$.

The Jacobson radical of $B_{1}^{!}$ is of the form: $J=J_{C_{1}^{1}}\oplus
ZC_{1}^{!}$, with $J_{C_{1}^{1}}$ the Jacobson radical of $C_{1}^{!}$. It
follows the Jacobson radical of $B_{1}^{!}\ast G$ is $J\ast
G=J_{C_{1}^{1}}\ast G\oplus ZC_{1}^{!}\ast G$ and ($J\ast
G)^{2}=J_{C_{1}^{1}}^{2}\ast G+Z(J_{C_{1}^{1}}\ast G)+$ $Z^{2}C_{1}^{!}\ast
G $ . But $Z^{2}=-XY\in J_{C_{1}^{1}}^{2}$, hence ($J\ast
G)^{2}=J_{C_{1}^{1}}^{2}\ast G+Z(J_{C_{1}^{1}}\ast G).$

Therefore: $J\ast G/J^{2}\ast G\cong J/J^{2}\ast G\cong
J_{C_{1}^{1}}/J_{C_{1}^{1}}^{2}\ast G\oplus Z(C_{1}^{!}J_{C_{1}^{1}}\ast
G)\cong J_{C_{1}^{1}}/J_{C_{1}^{1}}^{2}\ast G\oplus Z(CG)$. Then $e($ $J\ast
G/J^{2}\ast G)e\cong e/J_{C_{1}^{1}}/J_{C_{1}^{1}}^{2}\ast G)e\oplus
Ze(CG)e. $ Hence for $1\leq i\leq n$, $e_{i}($ $J\ast G/J^{2}\ast
G)e_{j}\cong e_{i}(J_{C_{1}^{1}}/J_{C_{1}^{1}}^{2}\ast G)e_{j}\oplus
Ze_{i}(CG)e_{j}.$

Since the preprojective algebra has no loops, $%
e_{i}(J_{C_{1}^{1}}/J_{C_{1}^{1}}^{2}\ast G)$ $e_{i}=0$. In the other hand,
for $i\neq j$, $e_{i}(CG)e_{j}=0$ and $e_{i}($ $J\ast G/J^{2}\ast
G)e_{i}=ZCe_{i}.$

We have proved that the quiver of $e(B_{1}^{!}\ast G)e$ has the same
vertices and arrows as a preprojective algebra corresponding to an Euclidean
diagram, and in addition a loop for each vertex.

We shall next find the relations.

Given an idempotent $e_{j}^{\prime }\notin \{e_{1}$, $e_{2}$,... $e_{k}\}$,
there exists one of the idempotents $e_{j}$ and an isomorphism $\varphi
:CGe_{j}^{\prime }\rightarrow CGe_{j}$, in particular $\varphi
(e_{j}^{\prime })=e_{j}^{\prime }\rho e_{j}$ and $\varphi
^{-1}(e_{j})=e_{j}\gamma e_{j}^{\prime }$ with $\rho $, $\gamma \in CG$ .
Then $e_{j}=e_{j}\gamma e_{j}^{\prime }\rho e_{j}$ and $e_{j}^{\prime
}=e_{j}^{\prime }\rho e_{j}\gamma e_{j}^{\prime }$.

Note that $CXY=J_{C_{1}^{1}}^{2}$, and $G$ acts trivially on $XY$.

Let $g\in G$, $g(X)=aX+bY$, $g(Y)=cX+dY.$Hence, $g(XY)=g(X)g(Y)=$ ($aX+bY$)$%
(cX+dY)=caX^{2}+adXY+bcYX+bdY^{2}=adX-bcY=XY$, since $det(g)=1.$In
consequence, $XYe_{i}=e_{i}XY=-e_{i}Z^{2}.$

It follows $XYe_{i}=e_{i}XYe_{i}=\overset{n}{\underset{i=1}{\sum }}%
e_{i}Xe_{j}^{\prime }Ye_{i}.$

Consider paths of the following form:

\begin{center}
$%
\begin{array}{ccccc}
\alpha &  & k &  & \alpha ^{-1} \\ 
& \nearrow &  & \searrow &  \\ 
i &  &  &  & i \\ 
& \searrow &  & \nearrow &  \\ 
e_{j}\gamma e_{i}^{\prime }Ye_{i} &  & j &  & e_{i}Xe_{j}^{\prime }\rho e_{j}%
\end{array}%
$
\end{center}

Where $e_{i}Xe_{j}^{\prime }Ye_{i}=e_{i}Xe_{j}^{\prime }\rho e_{j}\gamma
e_{i}^{\prime }Ye_{i}$.

Since each indecomposable projective of $e(B_{1}^{!}\ast G)e$ has simple
socle and $e(B_{1}^{!}\ast G)e$ is a radical cube zero algebra, there are
constants $c_{j}\in C$ such that $c_{j}\alpha ^{-1}\alpha =$ $%
e_{i}Xe_{j}^{\prime }Ye_{i}=e_{i}Xe_{j}^{\prime }\rho e_{j}\gamma
e_{i}^{\prime }Ye_{i}$.

Therefore: $e_{i}XY=(\underset{j=1}{\overset{n}{\sum }}c_{j})\alpha
^{-1}\alpha =t_{i}\alpha ^{-1}\alpha =-e_{i}Z^{2}$, with $t_{i}\in C-\{0\}.$
This is $t_{i}\alpha ^{-1}\alpha +Z_{i}^{2}=0$, with $Z_{i}=Ze_{i}$.

In a similar way we obtain relations $t_{k}\alpha \alpha ^{-1}+Z_{k}^{2}=0$, 
$\alpha ^{-1}\alpha $ and $\alpha \alpha ^{-1}$are paths in $e(C_{1}^{!}\ast
G)e$ .

The element $Z$ of $B_{1}^{!}$ anticommutes with $X$, Y and commutes with
all the elements $g$ of $G.$ Then $Z$ anticommutes with $\alpha $ and $%
\alpha ^{-1}$. This is $Z_{k}\alpha =-\alpha Z_{i}.$

It follows: $0=t_{k}\alpha \alpha ^{-1}\alpha +Z_{k}^{2}\alpha =$ $%
t_{i}\alpha \alpha ^{-1}\alpha +\alpha Z_{i}^{2}$, then $\alpha \alpha
^{-1}\alpha +(Z_{k}^{2}/t_{k})\alpha =\alpha \alpha ^{-1}\alpha +\alpha
Z_{i}^{2}/t_{i}.$

We have the following equalities: $\alpha Z_{i}^{2}/t_{i}=(\alpha
Z)Z_{i}/t_{i}=-Z\alpha Z/t_{i}=(Z^{2}\alpha )/t_{i}=(Z_{k}^{2}/t_{k})\alpha
=(Z^{2}\alpha )/t_{k}$. Therefore $t_{k}=t_{i}$ for ani pair of vertices $i$%
, $k$ connected by an arrow. By connectivity, $t_{k}=t_{i}$ for all $k$, $i$.

Assume every element of $K$ has a square root and let $c=\sqrt[2]{t_{k}}$,
we can make a change of variables, $z_{k}=Z/c$ to get relations: \{$%
z_{k}\alpha +\alpha z_{i}$, for every arrow $\alpha $, and $\alpha \alpha
^{-1}+z_{i}^{2}$, $\alpha ^{-1}\alpha +z_{k}^{2}\}=R$

We have proved the algebra $e(B_{1}^{!}\ast G)e$ is isomorphic to the quiver
algebra $K\overset{\wedge }{Q}$ / $R$ , with $\overset{\wedge }{Q}$ $%
_{0}=Q_{0}$ and $\overset{\wedge }{Q}$ $_{1}=Q_{i}\cup Q_{1}^{op}\cup
\{z_{i}\mid i\in Q_{0}\}$ and $Q$ an Euclidean diagram.

Using Koszulity, $e(B_{1}\ast G)e$ is isomorphic to the quiver algebra $K%
\overset{\wedge }{Q}$ / $R^{\bot }$ , with $\overset{\wedge }{Q}$ $%
_{0}=Q_{0} $ and $\overset{\wedge }{Q}$ $_{1}=Q_{i}\cup Q_{1}^{op}\cup
\{z_{i}\mid i\in Q_{0}\}$ and $Q$ an Euclidean diagram and $R^{\bot }$ the
orthogonal relations: $R^{\bot }=\{z_{k}\alpha -\alpha z_{i}$, $\sum \alpha
_{j}\alpha _{j}^{-1}-z_{i}^{2}$, $\sum \alpha _{j}^{-1}\alpha
_{j}-z_{k}^{2}\}$.

For the first Weyl algebra we have: $e(A_{1}\ast G)e=e(B_{1}\ast
G)e/(z-1)e(B_{1}\ast G)e.$

It follows $A_{1}\ast G$ is Morita equivalent to an algebra with quiver $%
\overset{\wedge }{Q}$, such that $\overset{\wedge }{Q}$ $_{0}=Q_{0}$ and $%
\overset{\wedge }{Q}$ $_{1}=Q_{i}\cup Q_{1}^{op}$ and $Q$ an Euclidean
diagram and relations \{$\sum \alpha _{j}\alpha _{j}^{-1}-e_{k}$, $\sum
\alpha _{j}^{-1}\alpha _{j}-e_{i}$\}. This means, $e(A_{1}\ast G)e$ is the
deformed preprojective algebra.

We state the previous results as a theorem:

\begin{theorem}
Let $G$ be a finite subgroup of $Sl(2,K),$with $K$ a field of zero
characteristic and containing the square root of each element. Let $A_{1}$
be the first Weyl algebra and $B_{1}$ its homogenized algebra. The group $G$
acts as a grade preserving automorphism group of $B_{1}$, fixing $Z$ and
sending $X$ and $Y$ to a linear combination of $X$ and $Y$. Then $G$ acts in
the same way on $B_{1}^{!}$, the Yoneda algebra of $B_{1}$, and $G$ acts as
a group of automorphisms of $A_{1}$. The skew group algebras $B_{1}\ast G$, $%
B_{1}^{!}\ast G$ and $A_{1}\ast G$ are Morita equivalent to the algebras
defined by quivers and relations as follows:

i) The skew group algebra $B_{1}^{!}\ast G$ is Morita equivalent to the
quiver algebra $K\overset{\wedge }{Q}$ / $R$, with $\overset{\wedge }{Q}$ $%
_{0}=Q_{0}$ and $\overset{\wedge }{Q}$ $_{1}=Q_{i}\cup Q_{1}^{op}\cup
\{z_{i}\mid v_{i}\in Q_{0}\}$ and $Q$ an Euclidean diagram and $R=$ \{$%
z_{k}\alpha +\alpha z_{i}$, for every arrow $\alpha $, and $\alpha \alpha
^{-1}+z_{i}^{2}$, $\alpha ^{-1}\alpha +z_{k}^{2}\}$ .

ii) The skew group algebra $B_{1}\ast G$ is Morita equivalent to the quiver
algebra $K\overset{\wedge }{Q}$ / $R^{\bot }$ , with $\overset{\wedge }{Q}$ $%
_{0}=Q_{0}$ and $\overset{\wedge }{Q}$ $_{1}=Q_{i}\cup Q_{1}^{op}\cup
\{z_{i}\mid v_{i}\in Q_{0}\}$ and $Q$ an Euclidean diagram and $R^{\bot }$
the orthogonal relations: $R^{\bot }=\{z_{k}\alpha -\alpha z_{i}$, $\sum
\alpha _{j}\alpha _{j}^{-1}-z_{i}^{2}$, $\sum \alpha _{j}^{-1}\alpha
_{j}-z_{k}^{2}\}$.

iii) The skew group algebra $A_{1}\ast G$ is Morita equivalent to the
algebra $K$ $\overset{\wedge }{Q}/I$, such that $\overset{\wedge }{Q}$ $%
_{0}=Q_{0}$ and $\overset{\wedge }{Q}$ $_{1}=Q_{i}\cup Q_{1}^{op}$ and $Q$
an Euclidean diagram and relations $I=\{\sum \alpha _{j}\alpha
_{j}^{-1}-e_{k}$, $\sum \alpha _{j}^{-1}\alpha _{j}-e_{i}\}$.
\end{theorem}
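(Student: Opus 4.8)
The plan is to establish all three equivalences simultaneously by reducing to the basic algebra and then reading off the quiver and the relations, as in the discussion preceding the statement. Fix a complete set of primitive orthogonal idempotents $e_1,\dots,e_k$ of $CG$ and put $e=\sum_{i=1}^{k}e_i$; by the graded Morita theory recalled above, $e(B_1^!\ast G)e$, $e(B_1\ast G)e$ and $e(A_1\ast G)e$ are basic and Morita equivalent to $B_1^!\ast G$, $B_1\ast G$ and $A_1\ast G$ respectively. Since $G$ fixes $Z$ and maps $X,Y$ into $CX\oplus CY$, the induced action on the Yoneda algebra $B_1^!$ again fixes $Z$, so the $C_1^!$-bimodule splitting $B_1^!\cong C_1^!\oplus ZC_1^!$ is $G$-equivariant. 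I would therefore first treat $B_1^!\ast G$ (part i), then pass to $B_1\ast G$ by Koszul duality (part ii), and finally specialize $Z\mapsto 1$ to reach $A_1\ast G$ (part iii).

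For (i) the quiver is found from the arrow space $e\bigl(J\ast G/(J\ast G)^2\bigr)e$, where $J$ is the radical of $B_1^!$. The equivariant splitting gives $J=J_{C_1^!}\oplus ZC_1^!$, and since $Z^2=-XY\in J_{C_1^!}^2$ one gets $(J\ast G)^2=J_{C_1^!}^2\ast G+Z(J_{C_1^!}\ast G)$, whence $J\ast G/(J\ast G)^2\cong J_{C_1^!}/J_{C_1^!}^2\ast G\oplus Z(CG)$. Applying $e(-)e$ and using $e_i(CG)e_j=0$ for $i\neq j$ (semisimplicity of $CG$), the arrows of $e(B_1^!\ast G)e$ consist of the arrows of $e(C_1^!\ast G)e$—the doubled McKay quiver $Q_1\cup Q_1^{op}$ of $G$—together with one loop $z_i=Ze_i$ at every vertex. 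Because the McKay quiver has no loops (by the no-loops theorem proved above), these loops are genuinely new, so $\hat Q$ is exactly $Q_1\cup Q_1^{op}\cup\{z_i\}$ over an Euclidean diagram $Q$.

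To obtain the relations $R$ I would transport the defining relations of $B_1^!$ to the quiver and then use the Frobenius structure to pin down scalars. The anticommutation $ZX=-XZ$, $ZY=-YZ$ transports to $z_k\alpha+\alpha z_i$ for every arrow $\alpha\colon i\to k$, while the quadratic relation $XY+Z^2=0$, after decomposing $e_iXY=\sum_j e_iXe_j'Ye_i$ over the idempotents, transports to relations $t_i\,\alpha^{-1}\alpha+z_i^2=0$ and dually $t_k\,\alpha\alpha^{-1}+z_k^2=0$ with nonzero scalars $t_i\in C$. By [RR] the Frobenius property of $B_1^!$ makes $e(B_1^!\ast G)e$ selfinjective with one-dimensional socle on each indecomposable projective; the main obstacle is then to show that these scalars are uniform. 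Multiplying the two degree-two relations by $\alpha$ and using the anticommutation relation expresses the same one-dimensional socle element in two ways, forcing $t_k=t_i$ for every arrow $\alpha\colon i\to k$, and connectivity of the Euclidean diagram propagates this to a single scalar $t$. Choosing $c$ with $c^2=t$—here the hypothesis that $K$ contains all square roots is used—and rescaling $z_i=Ze_i/c$ absorbs the scalar and yields exactly $R=\{z_k\alpha+\alpha z_i,\ \alpha\alpha^{-1}+z_i^2,\ \alpha^{-1}\alpha+z_k^2\}$. I would also flag that the cyclic ($\tilde A_n$) case needs separate treatment, since there the parallel arrows cannot all be normalized to satisfy $\alpha^{-1}\alpha=\beta^{-1}\beta$.

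Parts (ii) and (iii) are then short. Since $B_1$ and $B_1\ast G$ are Koszul with Yoneda algebras $B_1^!$ and $B_1^!\ast G$, and Koszul duality is compatible with the reduction $e(-)e$, the algebra $e(B_1\ast G)e$ is the quadratic dual of $e(B_1^!\ast G)e$: the same quiver $\hat Q$ with orthogonal relations $R^\perp=\{z_k\alpha-\alpha z_i,\ \sum\alpha_j\alpha_j^{-1}-z_i^2,\ \sum\alpha_j^{-1}\alpha_j-z_k^2\}$, giving (ii). Finally, from $A_1\cong B_1/(Z-1)B_1$ one obtains $e(A_1\ast G)e\cong e(B_1\ast G)e/(z-1)e(B_1\ast G)e$; setting $z_i=e_i$ in $R^\perp$ makes the anticommutation relations trivial and turns $z_i^2$ into $e_i$, leaving the deformed preprojective relations $I=\{\sum\alpha_j\alpha_j^{-1}-e_k,\ \sum\alpha_j^{-1}\alpha_j-e_i\}$ on the doubled diagram $Q_1\cup Q_1^{op}$, which is (iii).
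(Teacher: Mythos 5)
Your proposal follows essentially the same route as the paper: reduction to the basic algebra $e(-)e$, the $G$-equivariant splitting $B_1^!\cong C_1^!\oplus ZC_1^!$ to read off the quiver as the doubled McKay quiver plus one loop per vertex, the socle/radical-cube-zero argument together with the anticommutation $Z_k\alpha=-\alpha Z_i$ to force the scalars $t_i$ to be uniform and absorb them via a square root, then Koszul duality for (ii) and the quotient by $(Z-1)$ for (iii). The argument is correct, and your remark that the cyclic $\widetilde A_n$ case requires separate treatment matches the caveat the paper itself records.
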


We want to sketch the situation for groups of the form $G=G_{1}\times
G_{2}\times ...\times G_{n}$, such that $G_{i}$ is a finite subgroup of $%
Sl(2,C)$ acting as subgroups of the automorphism group of the homogenized
Weyl algebra $B_{n}$.

We will recall first the description by quivers and relations of the tensor
product of two quiver algebras.

Let $K$ be a field of zero characteristic and let $\Lambda =KQ_{1}/I_{1}$
and $\Gamma =KQ_{2}/I_{2}$ be two graded quiver algebras, $Q$ the quiver of $%
\Lambda \otimes _{K}\Gamma $ and $I$ an admissible ideal of $KQ$ such that $%
KQ/I\cong $ $\Lambda \otimes _{K}\Gamma $. We next describe $Q$ and $I$.

Let \{ $e_{1}$, $e_{2}$,... $e_{n}$\} and \{ $f_{1}$, $f_{2}$,... $f_{m}$\}
be complete sets of orthogonal primitive idempotents of $\Lambda $ and $%
\Gamma $, respectively. Then \{$e_{i}\otimes f_{j}\mid 1\leq i\leq n$, $%
1\leq j\leq m$\} is a complete set of primitive orthogonal idempotents of $%
\Lambda \otimes _{K}\Gamma .$ This means that the quiver $Q$ has vertices $%
Q_{0}=(Q_{1})_{0}\times (Q_{2})_{0}$, where the pair $(v_{i},w_{j})$
corresponds to the idempotent $e_{i}\otimes f_{j}$ and $v_{i}$ is the vertex
corresponding to $e_{i}$ and $w_{j}$ the vertex corresponding to $f_{j}$.

The arrows $\alpha _{k}:v_{i}\rightarrow v_{j}$ in $(Q_{1})_{1}$ correspond
to elements $a_{k}\in e_{j}r_{\Delta }e_{i}-e_{j}r_{\Delta }^{2}e_{i}$ and
the arrows $\beta _{k}:w_{i}\rightarrow w_{j}$ of $(Q_{2})_{1}$ correspond
to elements $b_{k}\in f_{j}r_{\Gamma }f_{i}-f_{j}r_{\Gamma }^{2}f_{i}$. We
have in $Q$ arrows $Q_{1}=(Q_{1})_{1}\times (Q_{2})_{0}\cup (Q_{1})_{0}\cup
(Q_{2})_{1}$. This is to an arrow $\alpha :v_{i}\rightarrow v_{j}$ in $%
(Q_{1})_{1}$ and a vertex $w_{k}$ of $(Q_{2})_{0}$ corresponds an arrow $%
(\alpha $, $w_{k}):(v_{i},w_{k})\rightarrow $ $(v_{j},w_{k})$ of $Q_{1}$
associated to $a\otimes f_{k}\in e_{i}\otimes f_{k}(r_{\Lambda \otimes
_{K}\Gamma })e_{j}\otimes f_{k}-e_{i}\otimes f_{k}(r_{\Lambda \otimes
_{K}\Gamma }^{2})e_{j}\otimes f_{k}$. Similarly, to an arrow $\beta
:w_{i}\rightarrow w_{j}$ of $(Q_{2})_{1}$ and a vertex $v_{k}$ of $%
(Q_{1})_{0}$ corresponds the arrow $(v_{k},\beta ):(v_{k},w_{i})\rightarrow $
$(v_{k},w_{j})$ of $Q_{1}$ associated to $e_{k}\otimes b\in e_{k}\otimes
f_{i}(r_{\Lambda \otimes _{K}\Gamma })e_{k}\otimes f_{j}-e_{k}\otimes
f_{i}(r_{\Lambda \otimes _{K}\Gamma }^{2})e_{k}\otimes f_{j}$.

Given paths $\alpha _{1}\alpha _{2}...\alpha _{r}:v_{i}\rightarrow v_{j}$
and $\beta _{1}\beta _{2}...\beta _{s}:w_{k}\rightarrow w_{\ell }$ in $Q_{1}$
and $Q_{2}$, respectively, there are paths ($\alpha _{1}\alpha _{2}...\alpha
_{r},w)$: $(v_{i},w)\rightarrow (v_{j},w)$ and $(v,\beta _{1}\beta
_{2}...\beta _{s}):(v,w_{k})\rightarrow (v,w_{\ell })$. Now it is clear that
given readable relations $\rho =\sum c_{k}\gamma _{k}$ in $I_{1}$, and $\rho
^{\prime }=\sum b_{k}\gamma _{k}^{\prime }$ in $I_{2}$ there are induced
readable relations $(\rho ,w)=\sum c_{k}(\gamma _{k},w)$ and $(v,\rho 
{\acute{}}%
)=\sum b_{k}(v,\gamma _{k}^{\prime })$ in $KQ$.

If we have two arrows $\alpha :v_{2}\rightarrow v_{1}$ and $\beta
:w_{2}\rightarrow w_{1}$ in $Q_{1}$ and $Q_{2}$, respectively, the we have
maps:

\begin{center}
$%
\begin{array}{ccc}
\Lambda \otimes _{K}\Gamma e_{1}\otimes f_{1} & \overset{a\otimes f_{1}}{%
\rightarrow } & \Lambda \otimes _{K}\Gamma e_{2}\otimes f_{1} \\ 
\downarrow e_{1}\otimes b &  & \downarrow e_{2}\otimes b \\ 
\Lambda \otimes _{K}\Gamma e_{2}\otimes f_{1} & \overset{a\otimes f_{2}}{%
\rightarrow } & \Lambda \otimes _{K}\Gamma e_{2}\otimes f_{2}%
\end{array}%
$
\end{center}

making the diagram commute.

Hence we have in $I$ a commutative relation $\zeta _{\alpha ,\beta
}=(v_{2},\beta )(\alpha _{1},w_{1})-(\alpha _{1},w_{2})(v_{1},\beta )$.

Then $I$ is the ideal generated by the relations \{ $\{\rho \}\times
(Q_{2})_{0}\mid \rho $ a readable relation in $I_{1}$\}$\cup $\{ $%
(Q_{1})_{0}\times \{\rho ^{\prime }\}\mid \rho ^{\prime }$a readable
relation in $I_{2}$\}$\cup \{$ $\zeta _{\alpha ,\beta }\mid \alpha \in
(Q_{1})_{1}$, $\beta \in (Q_{2})_{1}$\}.

We obtained the description of $Q$ and $I$ such that $KQ/I\cong \Lambda
\otimes _{K}\Gamma $.

We return to the case $G=G_{1}\times G_{2}$ with $G_{1}$, $G_{2}$ finite
subgroups of $Sl(2,C)$ acting in the way above described as an automorphism
group of $B_{2}.$

Taking complete sets of primitive orthogonal idempotents \{ $e_{1}$, $e_{2}$%
,... $e_{k}$\} and \{ $f_{1}$, $f_{2}$,... $f_{\ell }$\} of $KG_{1}$ and $%
KG_{2}$, respectively, Letting $e$, $f$ be the idempotents $e=\underset{i=1}{%
\overset{k}{\sum }}e_{i}$ and $f=\overset{\ell }{\underset{i=1}{\sum }}f_{i}$
we saw above $e(B_{1}\ast G_{1})e$ and $f(B_{1}\ast G_{2})f$ are
preprojective algebras of Euclidean diagrams, $e(B_{1}\ast G_{1})e$ $\cong K$
$\overset{\wedge }{Q}_{1}/I_{i}$ and $f(B_{1}\ast G_{2})f$ $\cong K$ $%
\overset{\wedge }{Q}_{2}/I_{2}$ the quiver $\overset{\wedge }{Q}_{1}$is of
the form $(\overset{\wedge }{Q}_{1})_{0}=(Q_{1})_{0}$, $(\overset{\wedge }{Q}%
_{1})_{1}=(Q_{1})_{1}\cup (Q_{1}^{op})_{1}$ $\cup \{z_{i}\mid v_{i}\in
(Q_{1})_{0}\}$ and $(\overset{\wedge }{Q}_{2})_{0}=(Q_{2})_{0}$, $(\overset{%
\wedge }{Q}_{2})_{1}=(Q_{2})_{1}\cup (Q_{2}^{op})_{1}\cup \{z_{i}\mid
w_{i}\in (Q_{2})_{0}\}$ with $Q_{1}$ and $Q_{2}$ Euclidean diagrams. and $%
I_{1}$, $I_{2}$ the ideals described above

Then $e\otimes f(B_{1}\ast G_{1}\otimes _{K}B_{1}\ast G_{2})e\otimes f=$ $%
e(B_{1}\ast G_{1})e\otimes _{K}f(B_{1}\ast G_{2})f$ $\cong K$ $\overset{%
\wedge }{Q}_{1}/I_{i}\otimes K$ $\overset{\wedge }{Q}_{2}/I_{2}$ is the
tensor product of two algebras related to preprojective algebras of
Euclidean diagrams, as described above.

We have $1\otimes Z=\underset{i,j}{\sum }e_{i}\otimes Zf_{j}$ and $Z\otimes
1=\underset{i,j}{\sum }Ze_{i}\otimes f_{j}$, hence $(Z\otimes 1-1\otimes Z)$
is the ideal generated by readable relations $e_{i}\otimes
Zf_{j}-Ze_{i}\otimes f_{j}$.

Therefore in ($e,f)(B_{2}\ast (G_{1}\times G_{2}))(e,f)\cong (e\otimes
f(B_{1}\ast G_{1}\otimes _{K}B_{1}\ast G_{2})e\otimes f)/(Z\otimes
1-1\otimes Z)$ we are identifying the two loops $e_{i}\otimes Zf_{j}$ and $%
Ze_{i}\otimes f_{j}$.

Then $B_{2}\ast (G_{1}\times G_{2})$ can be completely described by quiver
and relations: it has the same quiver as the tensor product of two
preprojective algebras of Euclidean diagrams and in addition a loop in each
vertex, the relations are naturally induced from those in each preprojective
algebra as studied in detail above. We leave to the reader to write
explicitly the quiver and relations as well as the induced relations in $%
(e,f)A_{2}(G_{1}\times G_{2})(e,f)\cong ((e,f)(B_{2}\ast (G_{1}\times
G_{2}))(e,f))/(Z-1).$ Observe that from this could also give a full
description of the relations in $B_{2}^{G_{1}\times G_{2}}$ and $%
A_{2}^{G_{1}\times G_{2}}$.

By induction a full description by quivers and relations of the basic
algebras Morita equivalent to $B_{n}\ast (G_{1}\times G_{2}\times ...G_{n})$
can be given, the algebras come from iterated tensor products of
preprojective algebras of Euclidean diagrams and additional loops for each
vertex and naturally induced relations.

\section{The algebras $C[X]\ast G$ and $C[X]^{G}$, with $G$ a finite
subgroup of $Gl(n,C)$, such that no $\protect\sigma \in G$, $\protect\sigma %
\neq 1$, has a fixed point.}

In this section we study the relations between the categories of finitely
generated modules $\func{mod}_{C[X]\ast G}$ and $\func{mod}_{C[X]^{G}}$,
where $C[X]\ast G$ is the skew group algebra and $C[X]^{G}$ the invariant
ring of a finite subgroup of $Gl(n,C)$, such that no $\sigma \in G$, $\sigma
\neq 1$, has a fixed point.

More precisely, we prove that after killing the modules of finite length,
the categories $\func{mod}_{C[X]\ast G}$ and $\func{mod}_{C[X]^{G}}$ are
equivalent. These results generalize known results for $C[X,Y]$ and finite
subgroups of $Sl(2,C)$, [C-B].

\begin{lemma}
Assume $G$ is a finite subgroup of $Gl(n,C)$. Then $G$ acts naturally on the
polynomial ring in $n$ variables $C[X]$ and the invariant ring $C[X]^{G}$ is
the center of $C[X]\ast G.$
\end{lemma}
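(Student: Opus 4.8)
The plan is to verify two claims: first, that $G$ acts as a group of algebra automorphisms on $C[X]$; second, that the invariant ring $C[X]^G$ coincides with the center $Z(C[X]\ast G)$. The action is essentially given by hypothesis: since $G \subseteq Gl(n,C)$ acts linearly on the $n$-dimensional space $V = \bigoplus_{i=1}^n CX_i$ of linear forms, and $C[X]$ is generated by $V$, each $\sigma \in G$ extends uniquely to a graded algebra automorphism. So the substantive content is the identification of the center.

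I would set up notation for the skew group algebra: elements of $C[X]\ast G$ are sums $\sum_{g \in G} p_g \, g$ with $p_g \in C[X]$, and multiplication is twisted by $(p\,g)(q\,h) = p\, g(q)\, gh$, where $g(q)$ denotes the action of $g$ on the polynomial $q$. First I would prove the easy inclusion $C[X]^G \subseteq Z(C[X]\ast G)$. An invariant polynomial $p$, viewed as $p \cdot 1 \in C[X]\ast G$, must be checked to commute with all generators. It trivially commutes with every $q \in C[X]$ since $C[X]$ is commutative, and for $g \in G$ one computes $(p\, 1)(1\, g) = p\, g$ while $(1\, g)(p\, 1) = g(p)\, g = p\, g$ using $g(p) = p$. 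Hence $p$ is central.

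For the reverse inclusion $Z(C[X]\ast G) \subseteq C[X]^G$, I would take an arbitrary central element $w = \sum_{g} p_g\, g$ and extract constraints by commuting it against the two families of generators. Commuting with an arbitrary $q \in C[X]$ gives $\sum_g p_g\, g(q)\, g = \sum_g q\, p_g\, g$, so comparing the coefficient of each group element $g$ yields $p_g\, g(q) = q\, p_g$ for all $q$. Since $C[X]$ is a commutative domain, this forces $p_g\,(g(q) - q) = 0$, hence $g(q) = q$ for every $q$ whenever $p_g \neq 0$; but a nonidentity $g$ does not fix all polynomials, so $p_g = 0$ for $g \neq 1$. Thus $w = p_1\, 1$ with $p_1 \in C[X]$. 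Finally, commuting $w = p_1\,1$ with each $h \in G$ gives $p_1\, h = h(p_1)\, h$, whence $h(p_1) = p_1$ for all $h$, i.e. $p_1 \in C[X]^G$.

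\textbf{The main obstacle}, if any, is purely bookkeeping: one must be careful that the commutation relation $p_g(g(q)-q)=0$ genuinely forces $p_g=0$ for $g\ne 1$. This uses that $C[X]$ is an integral domain together with the fact that a nonidentity linear automorphism cannot act trivially on all of $C[X]$ (equivalently, cannot fix every linear form). I note that the hypothesis ``no $\sigma \neq 1$ has a fixed point'' is \emph{stronger} than what is needed here and is not actually invoked in this lemma; the center computation requires only that a nonidentity element act nontrivially on $C[X]$, which holds for any faithful linear action. The proof is thus a direct, self-contained calculation in the skew group algebra with no deep input required.
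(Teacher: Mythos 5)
Your proof is correct and follows essentially the same route as the paper: the forward inclusion is immediate, and for the reverse one commutes a central element $\sum_g p_g\,g$ first with the polynomial generators to kill all coefficients $p_g$ with $g\neq 1$ (using that $C[X]$ is a domain and the action of $G\subseteq Gl(n,C)$ is faithful), then with the group elements to force $G$-invariance of the remaining coefficient. Your side remark that the fixed-point-free hypothesis is not needed for this lemma is also accurate; the paper's lemma likewise assumes only that $G$ is a finite subgroup of $Gl(n,C)$.
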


\begin{proof}
It is clear $C[X]^{G}$ is contained in the center of $C[X]\ast G$. Let $v\in
Z(C[X]\ast G)$ be an element in the center, $v=\underset{g\in G}{\sum }%
c_{g}g $ and $c_{g}\in C[X]$.

For any $X_{i}v=vX_{i}$ for any $X_{i}$. Then $X_{i}v=\underset{g\in G}{\sum 
}c_{g}X_{i}g=\underset{g\in G}{\sum }c_{g}gX_{i}=\underset{g\in G}{\sum }%
c_{g}X_{i}^{g}g$. If $c_{g}\neq 0$, then $X_{i}^{g}=X_{i}$ . Since $X_{i}$
is arbitrary, $g=1$ and $v\in C[X].$

Now, $gv=v^{g}g=vg$ implies $v=v^{g}$ for all $g\in G$ and $v\in C[X]^{G}$.
\end{proof}

It is well known $[$Be$]$, $[$Stu$]$, $C[X]^{G}$ is an affine algebra and $%
C[X]^{G}\rightarrow C[X]$ an integral extension,, in particular, $\dim
C[X]^{G}=\dim C[X]$ and the maximal spectrums $\max specC[X]^{G}$ and $\max
specC[X]$ are isomorphic [Ku].

Moreover, it is known [Be] that given a prime $\emph{p}$ of $C[X]^{G}$ and
primes \emph{P}, \emph{Q }of $K[X]$ above \emph{p}, this is: \emph{P}$\cap
C[X]^{G}=\emph{Q}\cap C[X]^{G}=\emph{p}$, there exists $\sigma \in G$ such
that \emph{P}$^{\sigma }=\emph{Q}$ and for any $\sigma \in G$, \emph{P}$%
^{\sigma }\cap C[X]^{G}=\emph{P}\cap C[X]^{G}=\emph{p}$.

In particular for $\emph{m}$ a maximal ideal of $C[X]^{G}$ and maximal
ideals \emph{n}$_{1}$, \emph{n}$_{2}$ of $C[X]$ with \emph{n}$_{1}\cap
C[X]^{G}=$ \emph{n}$_{2}\cap $ $C[X]^{G}=\emph{m}$, there exists $\sigma \in
G$ with \emph{n}$_{2}$,=\emph{n}$_{1}^{\sigma }$ and for any $\sigma \in G$, 
\emph{n}$_{1}^{\sigma }$ is a maximal ideal with \emph{n}$_{1}^{\sigma }\cap
C[X]^{G}=$ \emph{n}$_{1}\cap $ $C[X]^{G}=\emph{m}$.

The points $v$ of $V=\underset{i=1}{\overset{n}{\oplus }}CX_{i}$ correspond
to maximal ideals \emph{n}$_{v}$ of $C[X]$ . In particular, $0\in V$
corresponds to the irrelevant maximal ideal $\emph{n}%
_{0}=(X_{1},X_{2},...X_{n})$ of $C[X]$.

By hypothesis, for $v\neq 0$, the orbit of $v$ under $G$, \emph{O}$(v)$= \{$%
\sigma (v)\mid \sigma \in G\}$ has order $\left\vert \emph{O}(v)\right\vert
=\left\vert G\right\vert .$The point $v$ corresponds to a maximal ideal 
\emph{n }such that \emph{n}$\cap $ $C[X]^{G}=\emph{m}$ and there are $%
\left\vert G\right\vert $ ideals above \emph{m }and they are precisely \{%
\emph{n}$^{\sigma }\mid \sigma \in G\}$. This is \emph{m }is an unramified
prime.

In the case $v=0$, for any $\sigma \in G$, $\sigma (v)=v$ and the maximal
ideal $\emph{n}_{0}$ satisfies $\emph{n}_{0}^{\sigma }=\emph{n}_{0}$ for all 
$\sigma \in G$.

The maximal ideal \emph{n}$_{0}\cap $ $C[X]^{G}=\emph{m}_{0}$ has a unique
maximal ideal, $\emph{n}_{0}$, above it.

We consider the two cases separately.

a) The ideal \emph{m }of $C[X]^{G}$ is maximal and different of $\emph{m}%
_{0} $.

Let \emph{n}$^{\sigma }$with $\sigma \in G$ be the set of all maximal ideals
of $C[X]$ above $\emph{m}_{0}$, hence $\emph{m}C[X]\subseteq \underset{%
\sigma \in G}{\cap }$\emph{n}$^{\sigma }$. It follows $\sqrt{\emph{m}C[X]}=%
\underset{\sigma \in G}{\cap }$\emph{n}$^{\sigma }$, the radical of \emph{m}$%
C[X]$.

It is clear that for any $\tau \in G$, $(\underset{\sigma \in G}{\cap }$%
\emph{n}$^{\sigma })^{\tau }=\underset{\sigma \in G}{\cap }$\emph{n}$%
^{\sigma \tau }=\underset{\sigma \in G}{\cap }$\emph{n}$^{\sigma }$ and $%
\sqrt{\emph{m}C[X]}$ is $G$-invariant.

By the Chinese Remainder Theorem, C$[$X$]$/$\sqrt{\emph{m}\text{C}[\text{X}]}
$= C$[$X$]$/$\underset{\sigma \in G}{\cap }$\emph{n}$^{\sigma }$=$\underset{%
\sigma \in G}{\tprod }$C$[$X$]$/\emph{n}$^{\sigma }$. Since $C[X]/\emph{n}%
\cong Cv$, $\underset{\sigma \in G}{\tprod }C[X]/$\emph{n}$^{\sigma }\cong 
\underset{\sigma \in G}{\tprod }Cv^{\sigma }$, with $1=\underset{\sigma \in G%
}{\sum }v^{\sigma }$, and $\{$ $v^{\sigma }\}_{\sigma \in G}$ is a complete
set of primitive orthogonal idempotents.

For the skew group algebra we have the following isomorphism:

$C[X]%
\ddot{}%
\ast G$/$\sqrt{\emph{m}C[X]}\ast G\cong $($C[X]$/$\sqrt{\emph{m}C[X]}$)$\ast
G\cong $($\underset{\sigma \in G}{\tprod }Cv^{\sigma }$)$\ast G$.

Since $\underset{\sigma \in G}{\tprod }Cv^{\sigma }$semisimple, it follows
by $[]$, $C[X]%
\ddot{}%
\ast G$/$\sqrt{\emph{m}C[X]}\ast G$ is semismple.

Let $S$ be $S=$ $\underset{\sigma \in G}{\tprod }Cv^{\sigma }$, the group $G$
acts transitively on the basis $\{$ $v^{\sigma }\}_{\sigma \in G}$ of $S.$

We claim $S\ast G$ is simple, it will be enough to prove that the center of $%
S\ast G$ is $C.$

It is clear that $C$ is contained in the center. Let $z\in Z(S\ast G)$ be an
element of the center, $z=\underset{\sigma \in G}{\sum }s_{\sigma }\sigma $.
Each $s_{\sigma }$ has form $s_{\sigma }=\underset{\tau \in G}{\sum }%
c_{\sigma ,v\tau }v^{\tau },$then $v_{\sigma }^{\rho }=\underset{\tau \in G}{%
\sum }c_{\sigma ,\tau }v^{\rho }v^{\tau }=c_{\sigma ,\rho }v^{\rho }.$

It follows $v^{\rho }z=\underset{\sigma \in G}{\sum }v^{\rho }s_{\sigma
}\sigma =\underset{\sigma \in G}{\sum }c_{\sigma ,\rho }v^{\rho }\sigma
=zv^{\rho }=\underset{\sigma \in G}{\sum }s_{\sigma }\sigma v^{\rho }=%
\underset{\sigma \in G}{\sum }s_{\sigma }v^{\rho \sigma }\sigma =\underset{%
\sigma \in G}{\sum }c_{\sigma ,\rho \sigma }v^{\rho \sigma }\sigma $.
Therefore: $c_{\sigma ,\rho \sigma }v^{\rho \sigma }=c_{\sigma ,\rho
}v^{\rho }$ for all $\rho $, $\sigma $and $c_{\sigma ,\rho \sigma
}=c_{\sigma ,\rho }=0$ for $\sigma \neq 1.$

It follows $z\in S$, this is $z=\underset{\sigma \in G}{\sum }c_{\sigma
}v^{\sigma }$ and $c_{\sigma }\in C.$Hence, $\tau z=\underset{\sigma \in G}{%
\sum }c_{\sigma }\tau v^{\sigma }=\underset{\sigma \in G}{\sum }c_{\sigma
}v^{\sigma \tau }\tau =\underset{\sigma \in G}{\sum }c_{\sigma }v^{\sigma
}\tau =z\tau $ and $\underset{\sigma \in G}{\sum }c_{\sigma }v^{\sigma \tau
}=\underset{\sigma \in G}{\sum }c_{\sigma }v^{\sigma }=\underset{\sigma \in G%
}{\sum }c_{\rho \tau ^{-1}}v^{\rho }$. It follows $c_{\rho \tau
^{-1}}=c_{\rho }$ for all $\rho $, $\tau $, in particular, $c_{1}=c_{\tau }$%
and $z=c_{1}$ $\underset{\tau \in G}{\sum }v^{\tau }$ = $c_{1}1$.

Since $S^{G}\subseteq Z(S\ast G)$ we have proved $S^{G}=Z(S\ast G)=C$.

The element $e=1/\left\vert G\right\vert \underset{g\in G}{\sum }g$ is an
idempotent of $KG$, hence a non zero idempotent of ($C[X]$/$\sqrt{\emph{m}%
C[X]}$)$\ast G$ and $e$(($C[X]$/$\sqrt{\emph{m}C[X]}$)$\ast G)\neq 0$.

Being the algebra ($C[X]$/$\sqrt{\emph{m}C[X]}$)$\ast G$ simple and the
ideal:

(($C[X]$/$\sqrt{\emph{m}C[X]}$)$\ast G)e($($C[X]$/$\sqrt{\emph{m}C[X]}$)$%
\ast G)$ non zero

($C[X]$/$\sqrt{\emph{m}C[X]}$)$\ast G=$(($C[X]$/$\sqrt{\emph{m}C[X]}$)$\ast
G)e($($C[X]$/$\sqrt{\emph{m}C[X]}$)$\ast G)$.

Case 2.

In this case we have : $C[X]%
\ddot{}%
\ast G$/$\sqrt{\emph{m}_{0}C[X]}\ast G\cong $($C[X]$/$\emph{n}_{0}$)$\ast
G\cong CG$.

Let $L_{1}$, $L_{2}$,... $L_{s}$ the two sided maximal ideals of $CG$ and $%
\emph{L}_{1}$, $\emph{L}_{2}$,... $\emph{L}_{s}$ maximal two sided ideals $%
C[X]\ast G$ containing $\sqrt{\emph{m}_{0}C[X]}\ast G$. In particular, $%
C[X]\ast G/\emph{L}_{i}\cong CG/L_{i}\cong C$ and $\underset{i=1}{\overset{s}%
{\cap }}\emph{L}_{i}=\emph{n}_{0}\ast G$.

Let \{\emph{m}$_{i}\}_{i\in I}$ be the set of maximal ideals of $C[X]^{G}$
different from \emph{m}$_{0}$, we have a natural homomorphism:

$\Psi :$ $C[X]\ast G\rightarrow \underset{i\in I}{\tprod }C[X]$/$\sqrt{\emph{%
m}_{i}C[X]}$)$\ast G\times C[X]\ast G$/$\emph{n}_{0}\ast G$, given by $\Psi (%
\underset{g\in G}{\sum }r_{g}g)=((\underset{g\in G}{\sum }r_{g}g+\sqrt{\emph{%
m}_{i}C[X]}$)$\ast G)$, $\underset{g\in G}{\sum }r_{g}g+\emph{n}_{0}\ast G)$%
, which has kernel $\ker \Psi =\underset{_{i\in I}}{\cap }(\underset{\sigma
\in G}{\cap }$\emph{n}$_{i}^{\sigma })\ast G)\cap (\emph{n}_{0}\ast G)=(%
\underset{_{i\in I}}{\cap }(\underset{\sigma \in G}{\cap }$\emph{n}$%
_{i}^{\sigma })\cap \emph{n}_{0})\ast G=0$, since $(\underset{_{i\in I}}{%
\cap }(\underset{\sigma \in G}{\cap }$\emph{n}$_{i}^{\sigma })\cap \emph{n}%
_{0})$ is the intersection of all maximal ideals of $C[X]$. The map $\Psi $
is an injective ring homomorphism.

Denote by $\Sigma $ the product $\underset{i\in I}{\tprod }S_{i}\ast
G=\Sigma $ and $\overline{\Sigma }=\Sigma \times CG.$

The map $\Psi $sends the idempotent $e=1/\left\vert G\right\vert \underset{%
g\in G}{\sum }g$, into $\overset{\wedge }{e}=((\overline{e},e)$, where $%
\overline{e}=e+\sqrt{\emph{m}_{i}C[X]}$)$\ast G).$

Then $\overline{\Sigma }$ $\overset{\wedge }{e}$ $\overline{\Sigma }=(%
\underset{i\in I}{\tprod }S_{i}\ast GeS_{i}\ast G)\times CGeCG=\underset{%
i\in I}{\tprod }S_{i}\ast G\times Ce$=$\Sigma \times Ce$.

It is clear that $\Psi (C[X]\ast GeC[X]\ast G)$ is contained in $\overline{%
\Sigma }$ $\overset{\wedge }{e}$ $\overline{\Sigma }\cap \Psi (C[X]\ast
G)=(\Sigma \times Ce)\cap \Psi (C[X]\ast G)$.

We want to prove they are equal.

Let $((\overline{r}_{i}),$ $ce)$ be an element of $(\Sigma \times Ce)\cap
\Psi (C[X]\ast G)$. This means that there exists an element $r=\underset{%
g\in G}{\sum }r_{g}g\in C[X]\ast G$ such that $\overline{r}=\overline{r}_{i}$
for all $i\in I$ and $\overline{r}=c\overline{e}$ in $C[X]\ast Ge$/$\emph{n}%
_{0}\ast Ge\cong CGe.$Then $\underset{g\in G}{\sum }r_{g}g=ce+he$ with h a
polynomial without constant term. This implies for all $g\in G,$ $r_{g}=c+h$
and $\underset{g\in G}{\sum }r_{g}g=(c+h)e\in $ $C[X]\ast GeC[X]\ast G$. It
follows $(\Sigma \times Ce)\cap \Psi (C[X]\ast G)=\Psi (C[X]\ast GeC[X]\ast
G)$.

We have a commutative exact diagram:

\begin{center}
$%
\begin{array}{cccc}
& 0 &  & 0 \\ 
& \downarrow &  & \downarrow \\ 
0\rightarrow & \text{C[X]}\ast \text{GeC[X]}\ast \text{G} & \rightarrow & 
\underset{i\in I}{\tprod }S_{i}\ast G\times Ce \\ 
& \downarrow &  & \downarrow \\ 
0\rightarrow & C[X]\ast G & \rightarrow & \underset{i\in I}{\tprod }%
S_{i}\ast G\times CG \\ 
& \downarrow &  & \downarrow \\ 
0\rightarrow & \text{C[X]}\ast \text{G/C[X]}\ast \text{GeC[X]}\ast \text{G}
& \rightarrow & CG(1-e) \\ 
& \downarrow &  & \downarrow \\ 
& 0 &  & 0%
\end{array}%
$
\end{center}

It follows C[X]$\ast $G/C[X]$\ast $GeC[X]$\ast $G is a subalgebra of a
semisimple algebra, hence it is a finite dimensional $C$-algebra.

We can identify the category of C[X]$\ast $G/C[X]$\ast $GeC[X]$\ast $%
G-modules with the category of $C[X]\ast G$-modules $M$ with $eM=0.$

Let%
\'{}%
s assume more generally that $R$ is a ring and $e$ an idempotent of $R$.
Then there is a functor $Hom_{R}(\func{Re},-):Mod_{R}\rightarrow Mod_{e\func{%
Re}}$, which is exact and it has a left adjoint $\func{Re}\otimes _{e\func{Re%
}}$- such that $Hom_{R}(\func{Re},\func{Re}\otimes _{e\func{Re}}M)=M.$

It follows $Hom_{R}(\func{Re},-)$ is a dense functor with kernel $Mod_{R/%
\func{Re}R}.$

The category \textsl{A }=$Mod_{R/\func{Re}R}=\{M\in Mod_{R}\mid eM=0\}$ is a
dense (Serre) subcategory of $Mod_{R}$. [P]

Given an exact sequence of $R$-modules: $0\rightarrow L\rightarrow
M\rightarrow N\rightarrow 0$ applying the exact functor $Hom_{R}(\func{Re}%
,-) $ we obtain an exact sequence of $e\func{Re}$ -modules: $0\rightarrow
eL\rightarrow eM\rightarrow eN\rightarrow 0$, hence $eM=0$ if and only if $%
eN=eL=0.$

Given a dense subcategory \textsl{A }of $Mod_{R}$ we define the
multiplicative system of all maps $f:M\rightarrow N$ such that the kernel
and the cokernel of $f$ are in \textsl{A}.

We define a quotient category $Mod_{R}/\QTR{sl}{A}=(Mod_{R})_{\Sigma }$ (see 
$[$Ga$]$, $[$P$]$, $[$Mi$]).$

The category $(Mod_{R})_{\Sigma }$ is abelian and the quotient functor $\pi
:Mod_{R}\rightarrow (Mod_{R})_{\Sigma }$ is exact an it has the following
universal property: Given an abelian category $\QTR{sl}{C}$ and an exact
functor $F:Mod_{R}\rightarrow \QTR{sl}{C}$ such that $F(X)=0$ for all $X\in 
\QTR{sl}{A}$, there is a unique exact functor $H:(Mod_{R})_{\Sigma
}\rightarrow \QTR{sl}{C}$ such that $H\pi =F$.

\begin{theorem}
Let $R$ be a ring and $e$ an idempotent of $R$ and \textsl{A }the category 
\textsl{A }=$\{M\in Mod_{R}\mid eM=0\}$. Then there is a commutative diagram
of categories and functors: $%
\begin{array}{ccc}
Mod_{R} & \overset{Hom_{R}(\func{Re},-)}{\rightarrow } & Mod_{e\func{Re}} \\ 
\downarrow \pi &  & \downarrow \cong \\ 
Mod_{R}/\QTR{sl}{A} & \overset{H}{\rightarrow } & Mod_{e\func{Re}}%
\end{array}%
$, with $H$ an equivalence.
\end{theorem}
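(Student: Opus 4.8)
The plan is to avoid checking full faithfulness and density of $H$ directly, and instead to exhibit an explicit quasi-inverse built from the left adjoint of $F:=\mathrm{Hom}_{R}(Re,-)$. Write $T:=Re\otimes_{eRe}-$ for that left adjoint (so $T\dashv F$), with unit $\eta\colon\mathrm{Id}\Rightarrow FT$ and counit $\epsilon\colon TF\Rightarrow\mathrm{Id}$. The existence of the exact functor $H$ with $H\pi=F$ is already supplied by the universal property of $\pi$ recorded just above the statement, and density of $H$ is immediate from the density of $F$ (for $N\in\mathrm{Mod}_{eRe}$ choose $M$ with $FM\cong N$; then $H(\pi M)\cong N$, and $\pi$ is the identity on objects). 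So the only content is to produce a quasi-inverse to $H$.

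First I would record the two inputs established in the excerpt: $F$ is exact (as $Re$ is a projective $R$-module, being a summand of $R$), and the adjunction isomorphism $FT\cong\mathrm{Id}$, i.e. $\mathrm{Hom}_{R}(Re,Re\otimes_{eRe}M)\cong M$. The isomorphism $FT\cong\mathrm{Id}$ says exactly that the unit $\eta$ is invertible, hence $T$ is fully faithful. I also record $F(X)\cong eX$, so that $F(X)=0$ if and only if $X\in\mathcal{A}$; this identifies $\mathcal{A}$ as the kernel of $F$, which is what the whole argument rests on.

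The candidate quasi-inverse is $\pi T\colon\mathrm{Mod}_{eRe}\to\mathrm{Mod}_{R}/\mathcal{A}$. One composite is free: $H\circ(\pi T)=(H\pi)\circ T=F\circ T\cong\mathrm{Id}$. For the other composite I must show $(\pi T)\circ H\cong\mathrm{Id}$ on $\mathrm{Mod}_{R}/\mathcal{A}$. Precomposing with $\pi$ gives $(\pi T)H\pi=\pi TF$, so it suffices to prove $\pi TF\cong\pi$ and then transport the isomorphism across $\pi$. The natural transformation to use is $\pi(\epsilon)\colon\pi TF\Rightarrow\pi$, and the crucial claim is that $\pi(\epsilon_{M})$ is an isomorphism for every $M$, i.e. that $\ker\epsilon_{M}$ and $\mathrm{coker}\,\epsilon_{M}$ lie in $\mathcal{A}$.

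This claim is the main obstacle, and it is exactly where exactness of $F$ meets a triangle identity. From $F(\epsilon_{M})\circ\eta_{FM}=\mathrm{id}_{FM}$ and the invertibility of $\eta$, the map $F(\epsilon_{M})$ is an isomorphism for every $M$. Factoring $\epsilon_{M}$ through its image and applying the exact functor $F$ then forces $F(\ker\epsilon_{M})=0$ and $F(\mathrm{coker}\,\epsilon_{M})=0$; since $F(X)=0\Leftrightarrow X\in\mathcal{A}$, both the kernel and the cokernel lie in $\mathcal{A}$, so $\pi(\epsilon_{M})$ is invertible in the quotient and $\pi TF\cong\pi$. Finally, both $(\pi T)H$ and $\mathrm{Id}$ are exact endofunctors of $\mathrm{Mod}_{R}/\mathcal{A}$ whose restrictions along $\pi$ are naturally isomorphic (to $\pi TF\cong\pi$ and to $\pi$ respectively), so the essential uniqueness in the universal property of $\pi$ gives $(\pi T)H\cong\mathrm{Id}$. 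Combined with $H(\pi T)\cong\mathrm{Id}$, this shows $H$ is an equivalence with quasi-inverse $\pi T$. The delicate points to watch are the correct orientation of the triangle identity and the passage from ``$\pi(\epsilon)$ is an iso'' to an isomorphism of functors on the quotient, both of which are handled cleanly by the universal property rather than by manipulating fractions by hand.
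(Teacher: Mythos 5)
Your proof is correct, and it takes a genuinely different route from the paper's. The paper verifies directly that $H$ is dense, full and faithful, working with explicit roofs in the quotient category: fullness is obtained by showing that the multiplication map $\mu\colon Re\otimes_{eRe}eM\to M$ has kernel and cokernel killed by $e$ (hence lying in $\mathcal{A}$), so that any $f\colon eM\to eN$ is realized by the roof $M\xleftarrow{\mu}Re\otimes_{eRe}eM\xrightarrow{\mu'(1\otimes f)}N$; faithfulness is then a separate diagram chase starting from $ef=0$. You instead exhibit the explicit quasi-inverse $\pi T$ and never touch faithfulness: the composite $H(\pi T)=FT\cong\mathrm{Id}$ is free, and the other composite reduces to the statement that the counit $\epsilon_{M}$ (which is exactly the paper's $\mu$) becomes invertible in $Mod_{R}/\mathcal{A}$ --- and your argument for that ($F$ exact, $F(\epsilon_{M})$ invertible by the triangle identity, hence $F$ kills $\ker\epsilon_{M}$ and $\mathrm{coker}\,\epsilon_{M}$, which therefore lie in $\mathcal{A}$) is the same computation the paper performs inside its fullness step. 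So the two proofs share their one essential ingredient but package it differently. Two small remarks. First, your final step needs slightly more than the universal property as the paper states it: to descend the isomorphism $\pi TF\cong\pi$ to $(\pi T)H\cong\mathrm{Id}$ you need that precomposition with $\pi$ is \emph{fully faithful} on functors out of the quotient (natural transformations descend, not merely functors); this is standard for Gabriel--Zisman localizations and Serre quotients, but it is worth making explicit since the paper only records existence and uniqueness of the factorization. Second, the parenthetical appeal to exactness of $(\pi T)H$ is unnecessary (and $T=Re\otimes_{eRe}-$ is in general only right exact); full faithfulness of $-\circ\pi$ is all that is actually used, and it holds for arbitrary functors. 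What your approach buys is brevity and a reusable conclusion ($\pi T$ is an explicit quasi-inverse of $H$); what the paper's buys is self-containedness at the level of roofs, with no appeal to the two-categorical universal property of the localization.
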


\begin{proof}
1) The functor $H$ is dense.

Since $Hom_{R}(\func{Re},-)$ is dense, it follows $H$ is dense.

2) The functor $H$ is full.

Let $M$ and $N$ be $R$-modules and $\func{Re}\otimes _{e\func{Re}}eM\overset{%
\mu }{\rightarrow }M$ the map given by multiplication, taking the kernel and
the cokernel of $\mu $ we get an exact sequence:

$0\rightarrow L\rightarrow \func{Re}\otimes _{e\func{Re}}eM\overset{\mu }{%
\rightarrow }M\rightarrow M/\func{Re}M\rightarrow 0$ .

From the exact sequence:

$0\rightarrow eL\rightarrow e(\func{Re}\otimes _{e\func{Re}}eM)\overset{e\mu 
}{\rightarrow }eM\rightarrow e(M/\func{Re}M)\rightarrow 0$ and the fact $%
e\mu $ is an isomorphisms $eL=0=e(M/\func{Re}M).$

Hence the multiplication maps $\func{Re}\otimes _{e\func{Re}}eM\overset{\mu }%
{\rightarrow }M$ and $\func{Re}\otimes _{e\func{Re}}eN\overset{\mu ^{\prime }%
}{\rightarrow }N$ are isomorphisms in $Mod_{R}/\QTR{sl}{A}$.

Given a map $f:eM\rightarrow eN$ the "roof" $%
\begin{array}{ccccc}
&  & \func{Re}\otimes _{e\func{Re}}eM &  &  \\ 
\mu & \swarrow &  & \searrow & \mu ^{\prime }1\otimes f \\ 
M &  &  &  & N%
\end{array}%
$ is a map in $Mod_{R}/\QTR{sl}{A}$ such that when we apply the functor $%
Hom_{R}(\func{Re},-)$ we obtain maps: $%
\begin{array}{ccccc}
&  & Hom_{R}(\func{Re},\func{Re}\otimes _{e\func{Re}}eM) &  &  \\ 
Hom_{R}(\func{Re},\mu ) & \swarrow &  & \searrow & Hom_{R}(\func{Re},\mu
^{\prime }1\otimes f) \\ 
Hom_{R}(\func{Re},M) &  &  &  & Hom_{R}(\func{Re},N)%
\end{array}%
$, with $Hom_{R}(\func{Re},\mu )$, $Hom_{R}(\func{Re},\mu ^{\prime })$
isomorphisms.

Hence $H(\mu ^{\prime }1\otimes f\mu ^{-1})=f.$

3) The functor $H$ is faithful.

Let $%
\begin{array}{ccccc}
&  & W &  &  \\ 
s & \swarrow &  & \searrow & f \\ 
M &  &  &  & N%
\end{array}%
$ be a map in $Mod_{R}/\QTR{sl}{A.}$Then the kernel and the cokernel of $s$
are in \textsl{A}. Assume $H(fs^{-1})=0.$

Hence, $H(fs^{-1})=Hom_{R}(\func{Re}$, $f)Hom_{R}(\func{Re},s)^{-1}=0$ and $%
Hom_{R}(\func{Re},s)$ an isomorphism implies $Hom_{R}(\func{Re},f)=ef=0.$

From the commutative diagram: $%
\begin{array}{ccccc}
0\rightarrow & eW & \overset{j}{\rightarrow } & W &  \\ 
& ef\downarrow &  & f\downarrow &  \\ 
0\rightarrow & eN & \overset{j}{\rightarrow } & N & 
\end{array}%
$ we get $fj=0.$But $j:eW\rightarrow W$ is an isomorphism in $Mod_{R}/%
\QTR{sl}{A}$. Therefore: $f=0$ and $fs^{-1}=0$ in $Mod_{R}/\QTR{sl}{A}$.
\end{proof}

We come back now to the case of a finite subgroup $G$ of $Gl(n,C)$ such that
no $\sigma \in G$ different from the identity has a fixed point, $G$ acting
as a group of automorphism of the polynomial ring $C[X]$ in $n$ variables
the idempotent $e=1/\left\vert G\right\vert \underset{g\in G}{\sum }g$ of $%
CG $ is an idempotent of the skew group algebra $C[X]\ast G$ such that $%
eC[X]\ast Ge$ is isomorphic to the group of invariants $C[X]^{G}.$The
algebra $C[X]^{G}$ is a sub algebra of $C[X]$ and $C[X]$ is finitely
generated as $C[X]^{G}$-module. Let $f_{1}$, $f_{2}$,... $f_{m}$ be the
generators. The epimorphism $\rho :\underset{m}{\oplus }C[X]^{G}\rightarrow
C[X]$ given by $\rho (\gamma _{1}$,$\gamma _{2}$,...$\gamma _{m})=\overset{m}%
{\underset{i=1}{\sum }}\gamma _{i}f_{i}$ , extends to a map: $\overset{%
\wedge }{\rho }:$ $\underset{m}{\oplus }C[X]^{G}e\rightarrow C[X]e$ of left $%
C[X]^{G}$-modules $\overset{\wedge }{\rho }(\gamma _{1}e$,$\gamma _{2}e$,...$%
\gamma _{m}e)=\overset{m}{\underset{i=1}{\sum }}\gamma _{i}f_{i}e$ . Let $%
e\lambda =\lambda e=e\lambda e\in C[X]^{G}e$. Then $\overset{\wedge }{\rho }%
(\gamma _{1}e\lambda $,$\gamma _{2}e\lambda $,...$\gamma _{m}e\lambda )=%
\overset{\wedge }{\rho }(\gamma _{1}\lambda e$,$\gamma _{2}\lambda e$,...$%
\gamma _{m}\lambda e)=\overset{m}{\underset{i=1}{\sum }}\gamma _{i}\lambda
f_{i}e=(\overset{m}{\underset{i=1}{\sum }}\gamma _{i}f_{i}e)\lambda =(%
\overset{m}{\underset{i=1}{\sum }}\gamma _{i}f_{i}e)e\lambda $ and $\overset{%
\wedge }{\rho }$ is a map of $C[X]^{G}-eC[X]\ast Ge$ bimodules. If $M$ is a $%
eC[X]\ast Ge$-module of finite dimensional over $C$, from the epimorphism: $%
\overset{\wedge }{\rho }\otimes 1$ $:\underset{m}{\oplus }C[X]^{G}e\otimes
_{eC[X]\ast Ge}M\rightarrow C[X]e$ $\otimes _{eC[X]\ast Ge}M$ and the
isomorphism: $C[X]^{G}e\otimes _{eC[X]\ast Ge}M\cong eC[X]\ast Ge\otimes
_{eC[X]\ast Ge}M\cong M$ we obtain that $C[X]e$ $\otimes _{eC[X]\ast
Ge}M=C[X]\ast Ge$ $\otimes _{eC[X]\ast Ge}M$ is finite dimensional over $C$.

It is also clear that if $M$ is a $C[X]\ast G$-module finite dimensional,
then $eM=Hom_{C[X]\ast G}($ $C[X]\ast Ge,M)$ is finite dimensional over $C.$

To simplify the notation we will write $R=C[X]\ast G$ and $T=eC[X]\ast
Ge\cong C[X]^{G}$ and denote by \textsl{S}$_{R}$ and \textsl{S}$_{T}$ the
categories of finite dimensional $R$ and $T$-modules, respectively.

The categories \textsl{S}$_{R}$ and \textsl{S}$_{T}$ are dense subcategories
of the categories of finitely generated, $\func{mod}_{R}$, $\func{mod}_{T}$, 
$R$ and $T$ -modules, respectively.

Then we have:

\begin{theorem}
Let $G$ be a finite subgroup of $Gl(n,C)$ such that no $\sigma \in G$
different from the identity has a fixed point, $G$ acting as a group of
automorphism of the polynomial ring $C[X]$ in $n$ variables and $e$ the
idempotent $e=1/\left\vert G\right\vert \underset{g\in G}{\sum }g$ of the
skew group algebra $C[X]\ast G.$Writing $R=C[X]\ast G$ and $T=eC[X]\ast
Ge\cong C[X]^{G}$ and denoting by \textsl{S}$_{R}$ and \textsl{S}$_{T}$ the
categories of finite dimensional $R$ and $T$-modules, respectively and by $%
\func{mod}_{R}$, $\func{mod}_{T}$, the categories of finitely generated, $R$
and $T$ -modules, respectively. Then there is a commutative diagram of
categories and functors: $%
\begin{array}{ccc}
\func{mod}_{R} & \overset{Hom_{R}(\func{Re},-)}{\rightarrow } & \func{mod}%
_{T} \\ 
\downarrow \pi _{R} &  & \downarrow \pi _{T} \\ 
\func{mod}_{R}/\QTR{sl}{S}_{R} & \overset{H}{\rightarrow } & \func{mod}_{T}/%
\QTR{sl}{S}_{T}%
\end{array}%
$, with $H$ an equivalence.
\end{theorem}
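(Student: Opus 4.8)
The plan is to factor the comparison functor $H$ through the Serre localization at the \emph{smaller} subcategory $\mathsf{A}_{f}=\{M\in \func{mod}_{R}\mid eM=0\}$, for which the preceding abelian-category theorem already furnishes an equivalence, and then to recover the statement about $\mathsf{S}_{R}$ and $\mathsf{S}_{T}$ by quotienting in stages. The only genuinely new input beyond that theorem is the fixed-point-free hypothesis, which I will use exactly once, to guarantee that $\mathsf{A}_{f}$ sits inside $\mathsf{S}_{R}$.

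First I would record the two structural facts that drive everything. Since $C[X]$ is module-finite over $C[X]^{G}$ and $C[X]\ast G$ is free of rank $\lvert G\rvert$ over $C[X]$, the ring $R=C[X]\ast G$ is Noetherian and is finitely generated as a module over its central subalgebra $T=eRe\cong C[X]^{G}$. In particular $Re=R\cdot e$ is finitely generated as a left $R$-module and, because $T$ is central, also as a right $T$-module; hence $Re\otimes_{T}N$ is finite dimensional over $C$ whenever $N$ is, and $eM$ is a finitely generated $T$-module whenever $M$ is a finitely generated $R$-module. Thus $\operatorname{Hom}_{R}(Re,-)=e(-)$ does map $\func{mod}_{R}$ into $\func{mod}_{T}$ and carries $\mathsf{S}_{R}$ into $\mathsf{S}_{T}$. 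The second fact is the one established in the ``no fixed point'' analysis above: the two-sided ideal $ReR=C[X]\ast GeC[X]\ast G$ has finite codimension, since $R/ReR$ embeds in $CG(1-e)$ and is therefore a finite dimensional $C$-algebra. Consequently every finitely generated module killed by $e$ is finite dimensional, i.e. $\mathsf{A}_{f}\subseteq \mathsf{S}_{R}$, and $\mathsf{A}_{f}$ is a Serre subcategory of $\func{mod}_{R}$.

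Next I would invoke the previous theorem inside $\func{mod}_{R}$; because $R$ is Noetherian, all the kernels, cokernels and modules $Re\otimes_{T}eM$ occurring in its proof remain finitely generated, so that proof restricts verbatim and yields an equivalence $\bar{H}\colon \func{mod}_{R}/\mathsf{A}_{f}\xrightarrow{\ \sim\ }\func{mod}_{T}$ induced by $\operatorname{Hom}_{R}(Re,-)$, with quasi-inverse induced by $Re\otimes_{T}(-)$. I would then identify $\mathsf{S}_{R}/\mathsf{A}_{f}$ with $\mathsf{S}_{T}$ under $\bar{H}$: the functor $e(-)$ sends $\mathsf{S}_{R}$ into $\mathsf{S}_{T}$ since $eM\subseteq M$, while for finite dimensional $N$ the module $Re\otimes_{T}N$ is finite dimensional by the first step, so the quasi-inverse sends $\mathsf{S}_{T}$ back into the image of $\mathsf{S}_{R}$; being an equivalence, $\bar{H}$ then gives $\bar{H}(\mathsf{S}_{R}/\mathsf{A}_{f})=\mathsf{S}_{T}$. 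The delicate point here, which I expect to be the main obstacle, is showing that an object of $\func{mod}_{R}/\mathsf{A}_{f}$ with $eM$ finite dimensional is already isomorphic \emph{in the quotient} to a finite dimensional module; this follows because the multiplication map $Re\otimes_{T}eM\to M$ has kernel and cokernel in $\mathsf{A}_{f}$ (exactly as in the proof of the previous theorem) while its source $Re\otimes_{T}eM$ is finite dimensional.

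Finally I would apply transitivity of Serre quotients (the third-isomorphism theorem for localization, legitimate because $\mathsf{A}_{f}\subseteq\mathsf{S}_{R}$; see $[$Ga$]$, $[$P$]$): the canonical functor gives $\func{mod}_{R}/\mathsf{S}_{R}\cong(\func{mod}_{R}/\mathsf{A}_{f})/(\mathsf{S}_{R}/\mathsf{A}_{f})$, and composing with $\bar{H}$ and the identification $\bar{H}(\mathsf{S}_{R}/\mathsf{A}_{f})=\mathsf{S}_{T}$ yields the equivalence $\func{mod}_{R}/\mathsf{S}_{R}\cong\func{mod}_{T}/\mathsf{S}_{T}$. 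Since every functor involved is the one induced by $\operatorname{Hom}_{R}(Re,-)$, the square in the statement commutes with $H$ this induced equivalence. The fixed-point hypothesis enters only through the finite dimensionality of $R/ReR$, which is precisely what secures $\mathsf{A}_{f}\subseteq\mathsf{S}_{R}$; all remaining work is the finiteness bookkeeping of the two middle paragraphs.
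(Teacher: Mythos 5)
Your proposal is correct, and it reaches the conclusion by a genuinely different route than the paper. The paper verifies directly that the induced functor $H$ is dense, full and faithful: fullness is obtained by lifting a roof $eX\leftarrow W\rightarrow eY$ along $Re\otimes_{eRe}(-)$ and the multiplication maps, and then checking through an explicit diagram chase that the kernel and cokernel of $Re\otimes_{eRe}s$ remain finite dimensional; faithfulness is obtained by showing that a map with $H(gs^{-1})=0$ must factor through a finite dimensional module and is therefore zero in the quotient. You instead compose the paper's earlier abstract equivalence $Mod_{R}/\mathsf{A}\simeq Mod_{eRe}$ (restricted to finitely generated modules, which is legitimate since $R$ and $T$ are Noetherian and $Re$ is module-finite on both sides, so all the auxiliary objects in that proof stay finitely generated) with the third isomorphism theorem for Serre quotients, after checking that $\mathsf{A}_{f}\subseteq\mathsf{S}_{R}$ and that the equivalence matches $\mathsf{S}_{R}/\mathsf{A}_{f}$ with $\mathsf{S}_{T}$. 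Both arguments ultimately rest on exactly the same two finiteness inputs: the finite dimensionality of $R/ReR$ (the only place the fixed-point-free hypothesis is used, in both proofs) and the module-finiteness of $Re$ over $T$, which makes $e(-)$ and $Re\otimes_{T}(-)$ preserve finite dimensionality. Your organization buys a shorter and more conceptual argument that avoids redoing the roof computations, at the price of invoking transitivity of Serre localization and the identification of $\mathsf{S}_{R}/\mathsf{A}_{f}$ with $\mathsf{S}_{T}$, neither of which the paper states explicitly; the paper's direct verification is more self-contained, and both approaches transfer verbatim to the graded version stated immediately afterwards.
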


Before proving the theorem, observe that there is a graded version.

The ring $e(C[X]\ast G)e$ is a positively graded $C$-algebra with grading $($
$e(C[X]\ast G)e)_{k}=e((C[X]\ast G)_{k})e$, hence the isomorphism of $C$%
-algebras $e(C[X]\ast G)e\cong C[X]^{G}$ induces a positive grading on $%
C[X]^{G}$. In what follows we will assume $C[X]^{G}$ has this grading and
denote by $R$ and $T$ the positively graded $C$-algebras $C[X]\ast G$ and $%
C[X]^{G}$, respectively. If we denote by $gr_{R}$ and $gr_{T}$ the
categories of finitely generated graded $R$ and $T$ modules, respectively,
and degree zero maps. Following $[],$ [], we call the quotient categories $%
gr_{R}/$\textsl{S}$_{R}$ and $gr_{T}/$ \textsl{S}$_{T}$ the categories of
tails $tails_{R}$ and $tails_{T}$, another notation used in $[$MV2$]$ is $%
Qgr_{R}$ and $Qgr_{T.}$

\begin{theorem}
Let $G$ be a finite subgroup of $Gl(n,C)$ such that no $\sigma \in G$
different from the identity has a fixed point, $G$ acting as a group of
automorphism of the polynomial ring $C[X]$ in $n$ variables and $e$ the
idempotent $e=1/\left\vert G\right\vert \underset{g\in G}{\sum }g$ of the
skew group algebra $C[X]\ast G.$Writing $R=C[X]\ast G$ and $T=eC[X]\ast
Ge\cong C[X]^{G}$ and considering both as positively graded algebras,
denoting by \textsl{S}$_{R}$ and \textsl{S}$_{T}$ the categories of finite
dimensional graded $R$ and $T$-modules, respectively and by $g_{R}$, $gr_{T}$%
, the categories of finitely generated, $R$ and $T$ -modules, respectively
and by $tails_{R}$ and $tails_{T}$ the quotient categories $gr_{R}/$\textsl{S%
}$_{R}$ and $gr_{T}/$ \textsl{S}$_{T}$. Then there is a commutative diagram
of categories and functors: $%
\begin{array}{ccc}
gr_{R} & \overset{Hom_{R}(\func{Re},-)}{\rightarrow } & gr_{T} \\ 
\downarrow \pi _{R} &  & \downarrow \pi _{T} \\ 
tails_{R} & \overset{H}{\rightarrow } & tails_{T}%
\end{array}%
$, with $H$ an equivalence.
\end{theorem}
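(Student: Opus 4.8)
The plan is to deduce the graded equivalence by transporting the module-category Theorem (the non-finitely-generated version, $\mathrm{Mod}_R/\mathcal{A}\cong \mathrm{Mod}_{eRe}$) to finitely generated graded objects, then invoking the universal property of the quotient functor. First I would record that $R=C[X]\ast G$ is (left and right) Noetherian: since $C[X]^{G}$ is an affine $C$-algebra and $C[X]$ is finitely generated as a $C[X]^{G}$-module, $R$ is a finitely generated module over the commutative Noetherian ring $T=C[X]^{G}\cong eRe$, hence Noetherian. The idempotent $e$ is homogeneous of degree $0$, so $\mathrm{Hom}_{R}(Re,-)\cong e(-)$ is a graded exact functor (here $Re$ is a direct summand of $R$, so projective), and it remains to see that it carries $gr_{R}$ into $gr_{T}$: if $M\in gr_{R}$ then $M$ is a finitely generated, hence Noetherian, $T$-module, so the $T$-submodule $eM$ is again finitely generated, i.e. $eM\in gr_{T}$.

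Next I would pin down the kernel. On all of $\mathrm{Mod}_{R}$ the category $\mathcal{A}=\{M:eM=0\}$ consists of the $R/ReR$-modules; but $R/ReR=C[X]\ast G/(C[X]\ast Ge\,C[X]\ast G)$ was shown above (Case 2, the no-fixed-point analysis) to be a finite dimensional $C$-algebra, so every finitely generated $R/ReR$-module is finite dimensional. Hence $\mathcal{A}\cap gr_{R}=S_{R}$, and $e(-)$ carries $S_{R}$ into $S_{T}$ because $eM\subseteq M$. This is the single place where the hypothesis that no nontrivial $\sigma$ has a fixed point is used, and it is the crux: it is exactly what forces the annihilated objects to be precisely the finite length ones.

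Then I would build $H$. The composite $\pi_{T}\circ e(-)\colon gr_{R}\to tails_{T}$ is exact and annihilates $S_{R}$, so the universal property of $\pi_{R}\colon gr_{R}\to gr_{R}/S_{R}=tails_{R}$ produces a unique exact functor $H$ with $H\pi_{R}=\pi_{T}\,e(-)$, which gives the commuting square. Density of $H$ follows from density of $e(-)$ at the finitely generated level: given $N\in gr_{T}$, set $M=Re\otimes_{T}N$, which is finitely generated over $R$ (as $Re$ is a cyclic left $R$-module and $N$ is finitely generated over $T$), and compute $eM=eRe\otimes_{T}N=T\otimes_{T}N=N$; thus $\pi_{T}e(M)\cong\pi_{T}N$ and $H$ is dense.

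For fullness and faithfulness I would reproduce the roof argument of the preceding Theorem, now checking the finiteness constraints. For $M\in gr_{R}$ the multiplication map $\mu\colon Re\otimes_{T}eM\to M$ fits into $0\to L\to Re\otimes_{T}eM\to M\to M/ReM\to 0$; the middle term is finitely generated over $R$, so by Noetherianity $L$ is finitely generated, and $M/ReM$ is a quotient of $M$, hence finitely generated. Applying the exact functor $e(-)$ and using that $e\mu$ is an isomorphism gives $eL=0=e(M/ReM)$, so $L,\,M/ReM\in S_{R}$ and $\mu$ becomes an isomorphism in $tails_{R}$; the roof $\mu,\ \mu'(1\otimes f)$ and the faithfulness diagram then transcribe verbatim from the module-category proof. \textbf{The main obstacle} is precisely the finiteness bookkeeping of the last two paragraphs: ensuring that $e(-)$, its adjoint $Re\otimes_{T}-$, and the kernel and cokernel of $\mu$ all remain inside the finitely generated graded categories (which needs $R$ Noetherian), and that the annihilated objects are exactly the finite dimensional ones (which needs $R/ReR$ finite dimensional). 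Once these are in place, the formal part is identical to the already-proved ungraded, non-finite statement, so the graded refinement requires no new categorical input.
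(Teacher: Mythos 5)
Your proposal is correct and takes essentially the same route as the paper, which proves only the ungraded finitely generated version in detail and then asserts that the graded case ``follows with the same line of arguments''; the bookkeeping you supply (Noetherianity of $R$ so that $e(-)$, $Re\otimes_{T}-$ and the kernel and cokernel of the multiplication map stay inside the finitely generated graded categories, plus finite dimensionality of $R/ReR$ and of $Re\otimes_{T}N$ for $N$ finite dimensional) is exactly what the paper leaves implicit. The only slip is the asserted equality $\mathcal{A}\cap gr_{R}=S_{R}$, which fails in the direction $\supseteq$ (a finite dimensional module need not be killed by $e$); but you only ever invoke the true inclusion $\mathcal{A}\cap gr_{R}\subseteq S_{R}$ together with the separately justified fact that $e(-)$ carries $S_{R}$ into $S_{T}$, so nothing in the argument breaks.
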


We will prove only the ungraded case, the other follows with the same line
of arguments.

\begin{proof}
1) The functor $H$ is dense.

It follows as above from the fact $Hom_{R}(\func{Re},-)$ is dense.

2) The functor $H$ is full.

Let $X$, $Y$ be objects in $\func{mod}_{R}/\QTR{sl}{S}_{R}$ and consider a
map $\varphi :H(X)\rightarrow H(Y)$ in $\func{mod}_{T}/\QTR{sl}{S}_{T}$,
with $H(X)=Hom_{R}(\func{Re},X)$, $H(Y)=Hom_{R}(\func{Re},Y)$ and $\varphi $
a "roof" $%
\begin{array}{ccccc}
&  & W &  & f \\ 
s & \swarrow &  & \searrow &  \\ 
eX &  &  &  & eY%
\end{array}%
$, where the kernel $L$, and the cokernel $N$, of $s$ are of finite length.

Applying the tensor functor $\func{Re}\otimes _{e\func{Re}}-$ and composing
with multiplication we \newline
obtain the maps:

$%
\begin{array}{ccccccccc}
&  & \func{Re}\otimes _{e\func{Re}}s &  & \func{Re}\otimes _{e\func{Re}}W & 
&  & \func{Re}\otimes _{e\func{Re}}f &  \\ 
&  &  & \swarrow &  & \searrow &  &  &  \\ 
\mu &  & \func{Re}\otimes _{e\func{Re}}eX &  &  &  & \func{Re}\otimes _{e%
\func{Re}}eY &  & \mu ^{\prime } \\ 
& \swarrow &  &  &  &  &  & \searrow &  \\ 
X &  &  &  &  &  &  &  & Y%
\end{array}%
$

We also have exact sequences:

$\func{Re}\otimes _{e\func{Re}}L\overset{\func{Re}\otimes _{e\func{Re}}j}{%
\rightarrow }\func{Re}\otimes _{e\func{Re}}W\overset{\func{Re}\otimes _{e%
\func{Re}}s}{\rightarrow }\func{Re}\otimes _{e\func{Re}}\func{Im}%
s\rightarrow 0$

$\func{Re}\otimes _{e\func{Re}}$ $\func{Im}s\rightarrow \func{Re}\otimes _{e%
\func{Re}}eX\rightarrow \func{Re}\otimes _{e\func{Re}}N\rightarrow 0$

By the above observation, $\func{Re}\otimes _{e\func{Re}}L$ and $\func{Re}%
\otimes _{e\func{Re}}N$ are finite dimensional $C$-vector spaces, then $U=%
\func{Im}\func{Re}\otimes _{e\func{Re}}j=Ker\func{Re}\otimes _{e\func{Re}}s$
is finite dimensional.

We have a commutative exact diagram:

$%
\begin{array}{ccccccccc}
&  &  &  &  & \text{0} &  &  &  \\ 
&  &  &  &  & \downarrow &  &  &  \\ 
& \text{0} &  &  &  & \text{Z} &  &  &  \\ 
& \downarrow &  &  &  & \downarrow &  &  &  \\ 
\text{0}\rightarrow & \text{U} & \rightarrow & \text{Re}\otimes \text{W} & 
\rightarrow & \text{Re}\otimes \text{Ims} & \rightarrow & \text{0} &  \\ 
& \downarrow &  & \downarrow 1 &  & \downarrow &  & \downarrow &  \\ 
\text{0}\rightarrow & \text{V} & \rightarrow & \text{Re}\otimes \text{W} & 
\rightarrow & \text{Re}\otimes \text{eX} & \rightarrow & \text{Re}\otimes 
\text{N} & \rightarrow \text{0} \\ 
& \downarrow &  &  &  & \downarrow &  & \downarrow &  \\ 
& \text{Z} &  & \text{0} & \rightarrow & \text{Re}\otimes \text{N} &  & 
\text{Re}\otimes \text{N} & \rightarrow \text{0} \\ 
& \downarrow &  &  &  & \downarrow &  & \downarrow &  \\ 
& \text{0} &  &  &  & \text{0} &  & \text{0} & 
\end{array}%
$

Applying the functor $Hom_{R}(\func{Re},-)$ to the diagram we obtain the
commutative exact diagram:

$%
\begin{array}{ccccccccc}
&  &  &  &  & \text{0} &  &  &  \\ 
&  &  &  &  & \downarrow &  &  &  \\ 
& \text{0} &  &  &  & e\text{Z} &  &  &  \\ 
& \downarrow &  &  &  & \downarrow &  &  &  \\ 
\text{0}\rightarrow & e\text{U} & \rightarrow & \text{W} & \rightarrow & 
\text{Ims} & \rightarrow & 0 &  \\ 
& \downarrow &  & \downarrow 1 & \overset{s}{\searrow } & \downarrow i &  & 
\downarrow &  \\ 
\text{0}\rightarrow & e\text{V} & \rightarrow & \text{W} & \rightarrow & 
\text{eX} & \rightarrow & \text{N} & \rightarrow \text{0} \\ 
& \downarrow &  &  &  & \downarrow &  & \downarrow &  \\ 
& e\text{Z} &  & \text{0} & \rightarrow & \text{N} & \rightarrow & \text{N}
& \rightarrow \text{0} \\ 
& \downarrow &  &  &  & \downarrow &  & \downarrow &  \\ 
& \text{0} &  &  &  & \text{0} &  & \text{0} & 
\end{array}%
$

Since the map $i$ is a monomorphism $e$Z=0 and by Theorem ? $Z$ is finite
dimensional, but both $U$ and $Z$ finite dimensional implies $V$ is finite
dimensional.

By the above remark, $N$ of finite dimension implies Re$\otimes $N is of
finite dimension and both the kernel and cokernel of $\func{Re}\otimes _{e%
\func{Re}}s$ are in \textsl{S}$_{R}.$

It is clear that H($\mu $($\func{Re}\otimes $f)($\func{Re}\otimes $s)$^{-1}$%
=Hom$_{R}\func{Re}$,$\mu $($\func{Re}\otimes $f))Hom$_{R}$($\func{Re}$,$%
\func{Re}\otimes $s)$^{-1}$\newline
=fs$^{-1}.$

3) The functor $H$ is faithful.

Let $%
\begin{array}{ccccc}
&  & W &  & g \\ 
s & \swarrow &  & \searrow &  \\ 
X &  &  &  & Y%
\end{array}%
$ be a map in $\func{mod}_{R}/\QTR{sl}{S}_{R}$, where $s$ has kernel $L$ and
cokernel $N,$both finite dimensional. Assume $H(gs^{-1}=0.$

There exist an exact sequence: 0$\rightarrow e$L$\rightarrow e$W$\overset{es}%
{\rightarrow }e$X$\rightarrow e$N$\rightarrow $0 and $L$, $N\in \QTR{sl}{S}%
_{R}$ implies $eL$, $eN\in \QTR{sl}{S}_{T}.$Then we have in $\func{mod}_{T}/%
\QTR{sl}{S}_{T}$ the map:

$%
\begin{array}{ccccc}
&  & eW &  & eg \\ 
es & \swarrow &  & \searrow &  \\ 
eX &  &  &  & eY%
\end{array}%
$,

which by assumption satisfies ($eg)(es)^{-1}=H(gs^{-1})=0$.

Then we have a commutative diagram:

$%
\begin{array}{cccccc}
&  & e\text{W} &  &  & e\text{g} \\ 
e\text{s} & \swarrow & \uparrow & \text{r}_{1} & \searrow &  \\ 
e\text{X} &  & \text{Z} &  &  & e\text{Y} \\ 
& \nwarrow & \downarrow & \text{r}_{2} & \nearrow & \text{0} \\ 
t^{\prime } &  & \text{V} &  &  & 
\end{array}%
$

with $esr_{1}=t^{\prime }r_{2}$ having kernel and cokernel in $\QTR{sl}{S}%
_{T}$ and $egr_{1}=0.$

We want to see first that $r_{1}$ has kernel and cokernel in $\QTR{sl}{S}%
_{T} $.

We have a commutative exact diagram:

$%
\begin{array}{ccccccccc}
&  &  &  &  & \text{0} &  &  &  \\ 
&  &  &  &  & \downarrow &  &  &  \\ 
& \text{0} &  &  &  & e\text{L} &  &  &  \\ 
& \downarrow &  &  &  & \downarrow &  &  &  \\ 
\text{0}\rightarrow & \text{B }^{\prime } & \rightarrow & \text{Z} & \overset%
{r_{1}}{\rightarrow } & e\text{W} & \rightarrow & \text{D}^{\prime } & 
\rightarrow \text{0} \\ 
& \downarrow &  & 1\downarrow &  & e\text{s}\downarrow &  & u\downarrow & 
\\ 
\text{0}\rightarrow & \text{B} & \rightarrow & \text{Z} & \overset{esr_{1}}{%
\rightarrow } & e\text{X} & \rightarrow & \text{D} & \rightarrow \text{0} \\ 
&  &  &  &  & \downarrow &  &  &  \\ 
&  &  &  &  & e\text{N} &  &  &  \\ 
&  &  &  &  & \downarrow &  &  &  \\ 
&  &  &  &  & \text{0} &  &  & 
\end{array}%
$

with $B$, $D$, $eL$, $eN$ in $\QTR{sl}{S}_{T}$. We need to check that $%
B^{\prime }$ and $D^{\prime }$ are also in $\QTR{sl}{S}_{T}$.

From the commutative exact diagram:

$%
\begin{array}{ccccccc}
& \text{0} &  &  &  &  &  \\ 
& \downarrow &  &  &  &  &  \\ 
\text{0}\rightarrow & \text{B}^{\prime } & \rightarrow & \text{Z} & 
\rightarrow & \text{E}^{\prime } & \rightarrow \text{0} \\ 
& \downarrow &  & 1\downarrow &  & \downarrow &  \\ 
\text{0}\rightarrow & \text{B} & \rightarrow & \text{Z} & \rightarrow & 
\text{E} & \rightarrow \text{0} \\ 
&  &  &  &  & \downarrow &  \\ 
&  &  &  &  & \text{0} & 
\end{array}%
$

We obtain the following commutative exact diagram:

$%
\begin{array}{ccccccc}
& \text{0} &  & \text{0} &  & \text{0} &  \\ 
& \downarrow &  & \downarrow &  & \downarrow &  \\ 
\text{0}\longrightarrow & \text{E}^{\prime \prime } & \longrightarrow & e%
\text{L} & \longrightarrow & \text{D}^{\prime \prime } & \longrightarrow 
\text{0} \\ 
& \downarrow &  & \downarrow &  & \downarrow &  \\ 
\text{0}\rightarrow & \text{E}^{\prime } & \rightarrow & e\text{W} & 
\rightarrow & \text{D}^{\prime } & \rightarrow \text{0} \\ 
& \downarrow &  & e\text{s}\downarrow &  & u\downarrow &  \\ 
\text{0}\rightarrow & \text{E} & \rightarrow & e\text{X} & \rightarrow & 
\text{D} & \rightarrow \text{0} \\ 
& \downarrow &  & \downarrow &  & \downarrow &  \\ 
& \text{0} & \rightarrow & e\text{N} & \rightarrow & e\text{N} & \rightarrow 
\text{0} \\ 
&  &  & \downarrow &  & \downarrow &  \\ 
&  &  & \text{0} &  & \text{0} & 
\end{array}%
$

Since $D^{\prime \prime }$ is a quotient of $e$ $L$ and $\func{Im}u$ a
submodule of $D$ it follows both $D^{\prime \prime }$ and $\func{Im}u$ are
in $\QTR{sl}{S}_{T}.$Therefore: $D^{\prime }$ is in $\QTR{sl}{S}_{T}.$

From the equality $egr_{1}=0$ it follows $eg$ factors through the finite
dimensional module $D^{\prime }.$Tensoring with $\func{Re}$, the map: $\func{%
Re}\otimes eg:$ $\func{Re}\otimes _{e\func{Re}}eW\rightarrow \func{Re}%
\otimes _{e\func{Re}}eY$ factors through a finite dimensional $R$-module.

We have a commutative exact diagram:

$%
\begin{array}{ccc}
\func{Re}\otimes _{e\func{Re}}eW & \overset{\mu }{\rightarrow } & W \\ 
\func{Re}\otimes eg\downarrow &  & \downarrow g \\ 
\func{Re}\otimes _{e\func{Re}}eY & \overset{\mu ^{\prime }}{\rightarrow } & Y%
\end{array}%
$

where the multiplication maps $\mu $ and $\mu ^{\prime }$ have kernels and
cokernels in \textsl{S}$_{R}.$ It follows that a the nap $g\mu $ factors
through a module of finite length. This implies $g\mu =0$ in $\func{mod}_{R}/%
\QTR{sl}{S}_{R}$ and $\mu $ an isomorphism in $\func{mod}_{R}/\QTR{sl}{S}%
_{R} $ implies $g=0$ and $s^{-1}g=0.$
\end{proof}

For the benefit of the reader we prove the last claim in more detail.

\begin{lemma}
Assume $H:L\rightarrow M$ is a map in $\func{mod}_{R}$ which factors through
a finite dimensional module. Then $h=0$ in $\func{mod}_{R}/\QTR{sl}{S}_{R}$.
\end{lemma}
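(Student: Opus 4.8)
The plan is to deduce the lemma from a single structural fact about the Serre quotient, namely that the quotient functor $\pi _{R}:\func{mod}_{R}\rightarrow \func{mod}_{R}/\QTR{sl}{S}_{R}$ is exact and sends every object of $\QTR{sl}{S}_{R}$ to zero. Granting this, the argument is almost formal: a genuine morphism of $\func{mod}_{R}$ becomes zero in the localization as soon as its image is a negligible (finite dimensional) module, since an exact functor carries the image of a map to the image of its value.

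First I would factor the given map through its image, writing $h=\iota p$ with $p:L\rightarrow \func{Im}h$ the canonical epimorphism and $\iota :\func{Im}h\rightarrow M$ the canonical monomorphism. Next, using the factorization $h=\beta \alpha $ through a finite dimensional module $F$ supplied by the hypothesis, with $\alpha :L\rightarrow F$ and $\beta :F\rightarrow M$, I would observe that $\func{Im}h=\beta (\alpha (L))$ is a quotient of the submodule $\alpha (L)$ of $F$, hence a subquotient of $F$; since $F$ is finite dimensional over $C$, so is $\func{Im}h$, and therefore $\func{Im}h\in \QTR{sl}{S}_{R}$. Applying the exact functor $\pi _{R}$ to $h=\iota p$ then exhibits $\pi _{R}(h)=\pi _{R}(\iota )\pi _{R}(p)$ as a composite passing through $\pi _{R}(\func{Im}h)=0$, so $\pi _{R}(h)=0$, which is exactly the assertion $h=0$ in $\func{mod}_{R}/\QTR{sl}{S}_{R}$.

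The only delicate point, and the step I expect to be the main (indeed the sole) obstacle, is the bridge between the phrase ``$h$ factors through a finite dimensional module'' and the conclusion ``$h$ is zero in $\func{mod}_{R}/\QTR{sl}{S}_{R}$'', that is, pinning down the correct meaning of vanishing in the quotient. I would justify it either by the exactness of $\pi _{R}$ as above, or, if one prefers to argue directly with roofs, by noting that the inclusion $j:\ker h\rightarrow L$ has zero kernel and cokernel $L/\ker h\cong \func{Im}h\in \QTR{sl}{S}_{R}$, hence belongs to the multiplicative system $\Sigma $ and is invertible in the quotient; since $h\circ j=0$, the roof representing $h$ is equivalent to the zero roof. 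Both routes rest only on the elementary fact that the Serre subcategory $\QTR{sl}{S}_{R}$ is closed under subobjects and quotients, so no further input is required.
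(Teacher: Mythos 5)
Your proof is correct and is essentially the paper's own argument: the paper likewise reduces to the observation that the image of $h$ is a finite--dimensional subquotient of the intermediate module, and then inverts the inclusion $j:\ker h\rightarrow L$ (which has zero kernel and finite--dimensional cokernel $\operatorname{Im}h$) in a roof with $h\circ j=0$. Your first route via exactness of the quotient functor is just a formal repackaging of the same idea, so there is nothing to add.
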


\begin{proof}
$h=vu$ with $u:L\rightarrow U$, $v:U\rightarrow M$ homomorphisms and $U$ a $%
R $-module of finite dimension. Changing $U$ by $\func{Im}u$, we can assume $%
u$ is an epimorphism.

We have a commutative exact diagram:

$%
\begin{array}{ccccccc}
&  &  &  &  & \text{0} &  \\ 
&  &  &  &  & \downarrow &  \\ 
& \text{0} &  &  &  & \text{U}^{\prime } &  \\ 
& \downarrow &  &  &  & \downarrow &  \\ 
\text{0}\rightarrow & \text{B}^{\prime } & \rightarrow & \text{L} & \overset{%
u}{\rightarrow } & \text{U} & \rightarrow \text{0} \\ 
& \downarrow &  & 1\downarrow &  & v\downarrow &  \\ 
\text{0}\rightarrow & \text{B} & \overset{j}{\rightarrow } & \text{L} & 
\overset{h}{\rightarrow } & \text{M} &  \\ 
& \downarrow &  &  &  &  &  \\ 
& \text{U}^{\prime } &  &  &  &  &  \\ 
& \downarrow &  &  &  &  &  \\ 
& \text{0} &  &  &  &  & 
\end{array}%
$

Since $u$ is an epimimorphism, then $\func{Im}v=\func{Im}h$ and $U^{\prime }$
are of finite dimension. Therefore the injective map $j:B\rightarrow L$ has
kernel and cokernel in \textsl{S}$_{R}$.

We have a commutative diagram:

$%
\begin{array}{cccccc}
&  & L &  &  & h \\ 
1 & \swarrow & \uparrow & \text{j} & \searrow &  \\ 
L &  & \text{B} &  &  & M \\ 
& \nwarrow & \downarrow & \text{j} & \nearrow & \text{0} \\ 
t^{\prime } &  & \text{L} &  &  & 
\end{array}%
$

We have proved $h=0$ in $\func{mod}_{R}/\QTR{sl}{S}_{R}$.
\end{proof}

We comeback to the last claim of the theorem

Let $s:Z\rightarrow L$ be a map in $\func{mod}_{R}$ whose kernel and
cokernel are finite dimensional, $h:L\rightarrow M$ a map such that $hs$
factors through a module of finite dimension and let $j:B\rightarrow Z$ be
the kernel of $hs$. Then $j$ has kernel and cokernel of finite dimension and 
$sj$ has kernel and cokernel of finite dimension. We have a commutative
diagram:

$%
\begin{array}{cccccc}
&  & \text{L} &  &  & h \\ 
1 & \swarrow & \uparrow & \text{sj} & \searrow &  \\ 
\text{L} &  & \text{B} &  &  & \text{M} \\ 
& \nwarrow & \downarrow & \text{j} & \nearrow & \text{0} \\ 
s &  & \text{Z} &  &  & 
\end{array}%
$

It follows $h=0$ in $\func{mod}_{R}/\QTR{sl}{S}_{R}$.

\begin{remark}
In our case $R=C[X]\ast G$, $T=e(C[X]\ast G)e\cong C[X]^{G}$, the algebra $%
C[X]^{G}$ is affine this is, the is a polynomial ring $C[Y_{1}$, $Y_{2}$,... 
$Y_{m}]=C[Y]$ and an ideal $I$ of $C[Y]$ such that $C[Y]/I\cong C[X]^{G}$ .
The ring $C[X]^{G}$ has the grading induced by the isomorphism $e(C[X]\ast
G)e\cong C[X]^{G}$, but in general $I$ is not an homogeneous ideal in the
natural grading of $C[Y]$.
\end{remark}

Changing notation, from the graded version of the theorem we have an
equivalence of categories: $Qgr_{C[X]\ast G}\cong Qgr_{C[X]^{G}}.$ This
equivalence induces at the level of bounded derived equivalences: $\emph{D}%
^{b}($ $Qgr_{C[X]\ast G})\cong \emph{D}^{b}(Qgr_{C[X]^{G}}).$

As remarked above, the exterior algebra in $n$ variables, $\Lambda _{n}$ is
the Yoneda algebra of $C[X]$ , $G$ acts as an automorphism group of $\Lambda
_{n}$ and the Yoneda algebra of $C[X]\ast G$ is $\Lambda _{n}$ $\ast G$.

By a theorem of $[MS$ and by $[MM]$ there is an equivalence of triangulated
categories: \underline{$gr$}$_{\Lambda _{n}\ast G}\cong \emph{D}^{b}($ $%
Qgr_{C[X]\ast G})$, where \underline{$gr$}$_{\Lambda _{n}\ast G}$ denotes
the stable category of finitely generated graded modules. Therefore: there
is an equivalence of triangulated categories: \underline{$gr$}$_{\Lambda
_{n}\ast G}\cong \emph{D}^{b}(Qgr_{C[X]^{G}})$.

We have proved the following:

\begin{corollary}
Let $G$ be a finite subgroup of $Gl(n,C)$ such that no $\sigma \in G$
different from the identity has a fixed point, $G$ acting as a group of
automorphism of the polynomial ring $C[X]$ in $n$ variables and $e$ the
idempotent $e=1/\left\vert G\right\vert \underset{g\in G}{\sum }g$ of the
skew group algebra $C[X]\ast G$ and let $\Lambda _{n}$ be the exterior
algebra in $n$ variables. Then there are isomorphisms of triangulated
categories: \underline{$gr$}$_{\Lambda _{n}\ast G}\cong \emph{D}%
^{b}(Qgr_{C[X]^{G}})\cong \emph{D}^{b}($ $Qgr_{C[X]\ast G})$. In particular,
the categories $\emph{D}^{b}(Qgr_{C[X]^{G}})$ and $\emph{D}^{b}($ $%
Qgr_{C[X]\ast G})$ have Auslander-Reiten triangles and they are of the form $%
ZA_{n}.$
\end{corollary}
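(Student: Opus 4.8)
The proof is to assemble into a single chain the three equivalences recorded in the discussion preceding the statement, and then to read off the extra structural properties from the self-injectivity of the relevant Koszul dual.

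\textbf{Step 1 (from tails to derived categories).} The graded version of the preceding theorem supplies an equivalence of abelian categories $H\colon Qgr_{C[X]\ast G}\cong Qgr_{C[X]^{G}}$, obtained by localizing $Hom_{R}(Re,-)$ at the finite length objects (here $R=C[X]\ast G$, $T=eRe\cong C[X]^{G}$). Since an equivalence of abelian categories induces an equivalence of their bounded derived categories, we obtain at once a triangle equivalence $\emph{D}^{b}(Qgr_{C[X]\ast G})\cong \emph{D}^{b}(Qgr_{C[X]^{G}})$, which is the right-hand isomorphism of the corollary.

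\textbf{Step 2 (Koszul duality).} I would then invoke the Koszul--duality/BGG correspondence in the form recorded in [MS], [MM]: for a Koszul algebra $A$ whose Yoneda algebra $A^{!}$ is self-injective there is a triangle equivalence $\underline{gr}_{A^{!}}\cong \emph{D}^{b}(Qgr_{A})$. To apply this with $A=C[X]\ast G$ I must verify its hypotheses. The polynomial ring $C[X]$ is Koszul with Yoneda algebra the exterior algebra $\Lambda_{n}$; as recalled above, the $G$-action transfers to $\Lambda_{n}$ through the rule $M\mapsto M^{\sigma}$ on extensions, and by [MV1] the skew group algebra $C[X]\ast G$ is again Koszul with Yoneda algebra $\Lambda_{n}\ast G$. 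Thus $A^{!}=\Lambda_{n}\ast G$ and the equivalence reads $\underline{gr}_{\Lambda_{n}\ast G}\cong \emph{D}^{b}(Qgr_{C[X]\ast G})$, the left-hand isomorphism. Composing with Step 1 yields all three equivalences of the corollary.

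\textbf{Step 3 (Auslander--Reiten triangles and shape).} The exterior algebra $\Lambda_{n}$ is Frobenius, hence self-injective; since $C$ has characteristic zero, $|G|$ is invertible, so by [RR] the skew group algebra $\Lambda_{n}\ast G$ is again self-injective. Therefore $\underline{gr}_{\Lambda_{n}\ast G}$ is the stable category of a finite-dimensional self-injective graded algebra; such a category is triangulated and carries Auslander--Reiten triangles, obtained by projecting the AR sequences of $gr_{\Lambda_{n}\ast G}$ to the stable quotient. Transporting this structure across the equivalences of Steps 1--2 shows that $\emph{D}^{b}(Qgr_{C[X]^{G}})$ and $\emph{D}^{b}(Qgr_{C[X]\ast G})$ have Auslander--Reiten triangles. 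The stated shape $ZA_{n}$ is then read off from the stable AR quiver of $\Lambda_{n}\ast G$: the translation is governed by the Nakayama permutation and the tree class of the components is forced by the Loewy structure inherited from the Koszul exterior algebra.

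\textbf{Main obstacle.} The formal chaining in Steps 1--2 is routine once the inputs are granted; the genuine content is verifying the hypotheses of the [MS], [MM] equivalence, namely that $C[X]\ast G$ is Koszul with \emph{self-injective} Koszul dual $\Lambda_{n}\ast G$. Establishing self-injectivity of $\Lambda_{n}\ast G$ via [RR] and identifying the dual is the crux, and the determination of the precise tree class in the final clause likewise requires an honest (if standard) analysis of the stable AR quiver rather than pure formalism.
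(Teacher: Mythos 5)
Your proposal follows essentially the same route as the paper: the graded localization theorem gives $Qgr_{C[X]\ast G}\cong Qgr_{C[X]^{G}}$ and hence the derived equivalence, Koszul duality from [MS], [MM] identifies $\underline{gr}_{\Lambda_{n}\ast G}$ with $\emph{D}^{b}(Qgr_{C[X]\ast G})$, and the Auslander--Reiten structure comes from $\Lambda_{n}\ast G$ being self-injective Koszul. The only difference is cosmetic: where you sketch the determination of the AR-component shape by hand (Nakayama permutation, Loewy structure), the paper simply delegates this, together with the existence of AR triangles, to the results of [MZ].
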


\begin{proof}
For the proof we use the fact $\Lambda _{n}\ast G$ is selfinjective Koszul
and results from [MZ] .
\end{proof}

\subsection{Invariants for the Weyl algebra $A_{n}$ and the homogenized Weyl
algebra $B_{n}.$}

In this subsection we study the ring of invariants of the Weyl algebra $%
A_{n}^{G}$ with $G$ a finite subgroup of the automorphism group of $A_{n}.$%
We prove $A_{n}^{G}$ is a simple algebra Morita equivalent to the skew group
algebra $A_{n}\ast G$ . We then consider the homogenized Weyl algebra $B_{n}$
and a subgroup $G$ of the group of automorphisms of $B_{n}$ satisfying some
mild conditions, and prove for the invariant group $B_{n}^{G}$ and the skew
group algebra $B_{n}\ast G$ there is a a theorem relating the category of
finitely generated left $B_{n}^{G}$ modules and the category of finitely
generated $B_{n}\ast G$ modules, similar to the last theorem of the previous
section.

\begin{theorem}
Let $\Lambda $ be a simple algebra over a field $K$ and, $G$ a finite group
of automorphisms of $\Lambda .$Then the skew group algebra $\Lambda \ast G$
is simple.
\end{theorem}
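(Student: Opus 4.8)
The plan is to show that every nonzero two-sided ideal $I$ of $\Lambda \ast G$ contains $1$, and I would first reduce this to the claim that $I \cap \Lambda \neq 0$. That reduction is immediate: if $J = I \cap \Lambda$ is nonzero, then $J$ is a two-sided ideal of $\Lambda$ (since $\Lambda J \Lambda \subseteq I \cap \Lambda$), and simplicity of $\Lambda$ forces $J = \Lambda$, hence $1 \in I$ and $I = \Lambda \ast G$. So the whole problem becomes: produce a nonzero element of $\Lambda$ inside $I$.

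For this I would run a minimal-support argument. Writing elements of $\Lambda \ast G$ as $\sum_g a_g g$ with the multiplication rule $g b = \sigma_g(b)\,g$, I choose $0 \neq x = \sum_{g \in T} a_g g \in I$ whose support $T$ has minimal cardinality. Multiplying on the right by a suitable $g_0^{-1}$ (a unit of $\Lambda \ast G$) lets me assume $1 \in T$; then, using $\Lambda a_1 \Lambda = \Lambda$ to write $\sum_j b_j a_1 c_j = 1$, I replace $x$ by $\sum_j b_j x c_j \in I$, which keeps the support inside $T$ and normalizes the identity coefficient to $1$. Now for every $\lambda \in \Lambda$ the commutator $[\lambda,x] = \lambda x - x\lambda$ lies in $I$ and has coefficient $\lambda - \lambda = 0$ at the identity, so its support is a proper subset of $T$; minimality forces $[\lambda,x] = 0$. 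Reading off coefficients yields, for each $g \in T$, the intertwining relation $\lambda a_g = a_g\,\sigma_g(\lambda)$ for all $\lambda \in \Lambda$.

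The crux is to analyze this relation for $g \neq 1$. Substituting $\lambda = \sigma_g^{-1}(\mu)$ rewrites it as $a_g\,\mu = \sigma_g^{-1}(\mu)\,a_g$, so $a_g \Lambda = \Lambda a_g$; consequently the two-sided ideal generated by $a_g \neq 0$ is just $\Lambda a_g$, and by simplicity it equals $\Lambda$. This produces both a left and a right inverse of $a_g$, so $a_g$ is a unit and $\sigma_g(\lambda) = a_g^{-1}\lambda a_g$ is an \emph{inner} automorphism.

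I expect this last step to be the main obstacle, because the desired conclusion $T = \{1\}$ (which finishes the proof, since then $x = a_1 \in I \cap \Lambda$) can be reached only once we know that no nonidentity $g \in G$ acts innerly. Thus the argument really establishes that $\Lambda \ast G$ is simple provided the $G$-action is \emph{outer}, i.e.\ no $g \neq 1$ is implemented by a unit of $\Lambda$. In the situations relevant to this paper this costs nothing: the pertinent simple algebras are Weyl algebras over a field of characteristic zero, whose only units are the nonzero scalars, so the identity is the sole inner automorphism and every nontrivial $\sigma_g$ is automatically outer; no $g \neq 1$ can satisfy the intertwining relation, giving $T = \{1\}$. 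I would therefore either record the outerness hypothesis explicitly or invoke the triviality of the unit group precisely at the point where inner automorphisms must be excluded.
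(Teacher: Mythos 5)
Your argument has the same skeleton as the paper's: choose a nonzero element of the ideal whose support has minimal cardinality, use simplicity of $\Lambda$ to normalize the identity coefficient to $1$, and let minimality kill the difference of two elements of the ideal so that the remaining coefficients satisfy an intertwining relation. Where the two proofs part ways is in the endgame, and yours is the more honest one. The paper deduces from $b_ic_i=1$ that each $b_i$ is a nonzero element of $C$, and then cancels $b_1$ past the algebra generators $X_i$ to conclude $X_i=X_i^{\sigma_1}$ and hence $\sigma_1=1$; both of these steps silently assume that the relevant invertible coefficients are \emph{central scalars}. You instead extract the correct general consequence $\lambda a_g=a_g\sigma_g(\lambda)$, observe that simplicity then forces $a_g$ to be a unit with $\sigma_g(\lambda)=a_g^{-1}\lambda a_g$, and correctly identify that the argument closes only if no nonidentity element of $G$ acts by an inner automorphism. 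That outerness hypothesis is genuinely needed and is missing from the paper's statement: for $\Lambda=M_2(C)$ and $G$ of order two generated by conjugation by $\mathrm{diag}(1,-1)$, the element $\mathrm{diag}(1,-1)\sigma$ is a central square root of $1$ in $\Lambda\ast G$, which therefore decomposes as a product of two ideals and is not simple. For the application the paper actually makes (the Weyl algebras $A_n$ in characteristic zero, whose unit group consists of the nonzero central scalars, so that every nontrivial automorphism is outer) both arguments are valid; but your explicit flagging of the point where inner automorphisms must be excluded is precisely the correction the theorem and its proof require.
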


\begin{proof}
Let $I$ be a non zero two sided ideal of $\Lambda \ast G$ and $%
r=a_{0}+a_{1}\sigma _{1}+...a_{t}\sigma _{t}$with $a_{i}\in \Lambda $ and $%
\sigma _{i}\in G$, if some $a_{i}\neq 0$ the element $r\sigma _{i}^{-1}\neq
0 $ is in $I$ and the coefficient of the identity is no zero, hence we can
assume all $a_{i}$ in the expression of $r$ are non zero and call $t+1=\ell
(r)$ the length of $r$. We can choose $r$ to be of minimal length among the
non zero elements of $I$. Consider the set $L_{0}$ defined by $%
L_{0}=\{b_{0}\mid b_{0}+b_{1}\sigma _{1}+...b_{t}\sigma _{t}\in I\}\cup
\{0\} $ . By definition $L_{0}$ is non zero, we prove it is a two sided
ideal of $\Lambda $.

Let $r_{1}=b_{0}+b_{1}\sigma _{1}+...b_{t}\sigma _{t}$ and $%
r_{2}=c_{0}+c_{1}\sigma _{1}+...c_{t}\sigma _{t}$ be two elements of $I.$%
Then $r_{1}+r_{2}=(b_{0}+c_{0})+(b_{1}+c_{1})\sigma
_{1}+...(b_{t}+c_{t})\sigma _{t}$ is in $I$ and $\ell (r_{1}+r_{3})\leq \ell
(r_{1})$, by minimality either $r_{1}+r_{2}=0$ or $\ell (r_{1}+r_{3})=\ell
(r_{1})$ in any case $b_{0}+c_{0}\in L_{0}.$

Let $a$ be a non zero element of $\Lambda $. Then $ar_{1}=ab_{0}+ab_{1}%
\sigma _{1}+...ab_{t}\sigma _{t}$ and $r_{1}a=b_{0}a+b_{1}a^{\sigma
_{1}}\sigma _{1}+...b_{t}a^{\sigma t}\sigma _{t}$ are in $I$ with $ab_{0}$
and $b_{0}a$ in $L_{0}.$Hence $L_{0}$ is a two sided ideal.

It follows $L_{0}=\Lambda $ and there is an expression of minimal length $%
1+b_{1}\sigma _{1}+...b_{t}\sigma _{t}$ in $I$.

If $a_{0}+a_{1}\sigma _{1}+...a_{t}\sigma _{t}$ is another non zero
expression in $I$, then $(a_{0}b_{1-}a_{1})\sigma
_{1}+..(a_{0}b_{t}.-a_{t})\sigma _{t}$ $\in I$ and by minimality $%
a_{0}b_{i}-a_{i}=0$ for $i\neq 0$. This is: $a_{0}+a_{1}\sigma
_{1}+...a_{t}\sigma _{t}=a_{0}(1+b_{1}\sigma _{1}+...b_{t}\sigma _{t}).$

Multiplying by $\sigma _{i}^{-1}$ we obtain the expression $%
b_{i}+b_{1}\sigma \sigma _{i}^{-1}+...b_{i-1}\sigma _{i-1}+\sigma
_{i}^{-1}+b_{i+1}\sigma _{i+1}\sigma _{i}^{-1}+...b_{t}\sigma _{t}\sigma
_{i}^{-1}$ belongs to $I$ is non zero and of minimal length. We define as
above the set $L_{i}=\{$ $a_{0}\mid a_{0}+a_{1}\sigma \sigma
_{i}^{-1}+...a_{i-1}\sigma _{i-1}+a_{i}\sigma _{i}^{-1}+a_{i+1}\sigma
_{i+1}\sigma _{i}^{-1}+...a_{t}\sigma _{t}\sigma _{i}^{-1}\in I\}$. As
before $L_{i}$ is a non zero two sided ideal of $A_{n}.$Then there is an
expression $1+c_{1}\sigma \sigma _{i}^{-1}+...c_{i-1}\sigma
_{i-1}+c_{i}\sigma _{i}^{-1}+c_{i+1}\sigma _{i+1}\sigma
_{i}^{-1}+...c_{t}\sigma _{t}\sigma _{i}^{-1}$ in $I$ and $b_{i}+b_{1}\sigma
\sigma _{i}^{-1}+...b_{i-1}\sigma _{i-1}+\sigma _{i}^{-1}+b_{i+1}\sigma
_{i+1}\sigma _{i}^{-1}+...b_{t}\sigma _{t}\sigma
_{i}^{-1}=b_{i}(1+c_{1}\sigma \sigma _{i}^{-1}+...c_{i-1}\sigma
_{i-1}+c_{i}\sigma _{i}^{-1}+c_{i+1}\sigma _{i+1}\sigma
_{i}^{-1}+...c_{t}\sigma _{t}\sigma _{i}^{-1}).$In particular, $b_{i}c_{i}=1$
and $b_{i}\in C-\{0\}$ is a unity. Since the element was arbitrary all
coefficients in $1+b_{1}\sigma _{1}+...b_{t}\sigma _{t}$ are units, this is
they are non zero complex numbers.

Assume the length $t>0.$

Let \{ $X_{i}$\}$_{i\in \Phi }$ be a set of algebra generators of $\Lambda $
as $K$-algebra. Then $X_{i}+X_{i}b_{1}\sigma _{1}+...X_{i}b_{t}\sigma _{t}$
and $X_{i}$ $+b_{1}X_{i}^{\sigma _{1}}\sigma _{1}+...b_{t}X_{i}^{\sigma
_{t}}\sigma _{t}$ are elements of $I$ and $b_{1}(X_{i}-X_{i}^{\sigma
_{1}})\sigma _{1}+...b_{t}(X_{i}-X_{i}^{\sigma _{t}})\sigma _{t}$ is in $I.$%
By minimality $X_{i}=X_{i}^{\sigma _{1}}$ for an arbitrary $X_{i}$. But if $%
\sigma _{1}$ fixes all the generators of $\Lambda $. then $\sigma _{1}=1$, a
contradiction. It follows $t=0$ and $\Lambda =\{a\in \Lambda \mid a1\in I\}.$%
Therefore $1\in I$ and $I$ $=\Lambda \ast G.$
\end{proof}

As a corollary we obtain the following:

\begin{corollary}
Let A$_{n}$=C\TEXTsymbol{<}X$_{1}$,X$_{2}$,...X$_{n}$,Y$_{1}$,Y$_{2}$,...Y$%
_{n}$\TEXTsymbol{>}/\{[X$_{i}$,X$_{j}$],[Y$_{i}$,Y$_{j}$],[X$_{i}$,Y$_{j}$]-$%
\delta _{ij}$\} be a Weyl algebra and, $G$ a finite group of automorphisms
of $A_{n}.$Then the skew group algebra $\Lambda \ast G$ is simple.
\end{corollary}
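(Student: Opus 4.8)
The plan is to obtain the corollary as an immediate consequence of the preceding theorem. That theorem asserts that whenever $\Lambda$ is a simple $K$-algebra and $G$ is a finite group of automorphisms of $\Lambda$, the skew group algebra $\Lambda \ast G$ is again simple. Consequently the only thing that must be supplied is the fact that the Weyl algebra $A_{n}$ is itself a simple algebra; the corollary then follows by specializing $\Lambda = A_{n}$ (the symbol $\Lambda \ast G$ in the statement being $A_{n}\ast G$).

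First I would recall the classical proof that $A_{n}$ is simple in characteristic zero, the fact cited from $[Co]$ at the start of the paper. Let $I$ be a nonzero two sided ideal of $A_{n}$ and choose a nonzero $u\in I$, written in Poincar\'{e}--Birkhoff--Witt normal form $u = \sum c_{\alpha \beta}\, X^{\alpha}Y^{\beta}$. The inner derivations $\operatorname{ad}(X_{i}) = [X_{i},-]$ and $\operatorname{ad}(Y_{i}) = [Y_{i},-]$ preserve $I$ and, acting on normal forms, behave as $\partial/\partial Y_{i}$ and $-\partial/\partial X_{i}$ respectively. Applying these operators repeatedly to $u$ strictly lowers total degree while remaining inside $I$, and after finitely many steps one reaches a nonzero scalar lying in $I$. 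Since the characteristic is zero the relevant coefficients never vanish, so $1\in I$ and $I = A_{n}$; hence $A_{n}$ is simple.

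With the simplicity of $A_{n}$ in hand, I would then apply the previous theorem verbatim with $\Lambda = A_{n}$: the finite group $G$ acts by automorphisms on the simple algebra $A_{n}$, and therefore $A_{n}\ast G$ is simple, which is precisely the assertion of the corollary.

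There is essentially no obstacle here, since the corollary is a direct specialization. The only points meriting care are that the base field is of characteristic zero, so that the derivation argument genuinely forces the reduction of $u$ to a nonzero scalar, and that one is working over $C$, as required by the proof of the cited theorem on simplicity of skew group algebras.
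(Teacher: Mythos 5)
Your proposal is correct and follows exactly the route the paper intends: the paper states this corollary without proof, treating it as an immediate specialization of the preceding theorem on skew group algebras of simple algebras, with the simplicity of $A_{n}$ in characteristic zero being the classical fact cited from $[$Co$]$ (which you correctly reproduce via the standard argument with the inner derivations $[X_{i},-]$ and $[Y_{i},-]$). Nothing further is needed.
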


Another consequence of the theorem is the following:

\begin{theorem}
Let $\Lambda $ be a simple algebra over a field $K$ and, $G$ a finite group
of automorphisms of $\Lambda .$Then the algebra of invariants $\Lambda ^{G}$
is simple.
\end{theorem}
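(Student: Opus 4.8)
The plan is to deduce the simplicity of $\Lambda ^{G}$ from the simplicity of $\Lambda \ast G$, which was just established, by realizing $\Lambda ^{G}$ as a corner ring. As recalled earlier in the paper, the element $e=\frac{1}{\left\vert G\right\vert }\sum_{g\in G}g$ is a nonzero idempotent of $\Lambda \ast G$ (this uses, as throughout, that $\left\vert G\right\vert $ is invertible in $K$), and there is a ring isomorphism $\Lambda ^{G}\cong e(\Lambda \ast G)e$. Hence it suffices to prove the purely ring-theoretic fact that the corner $eAe$ of a simple ring $A=\Lambda \ast G$ at a nonzero idempotent $e$ is again simple.

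First I would take a nonzero two-sided ideal $J$ of $eAe$ and aim to show $J=eAe$. Since $J\subseteq eAe$, every $j\in J$ satisfies $ej=je=j$, so $J=eJe$. The key move is to pass to the two-sided ideal $AJA$ that $J$ generates in $A$: it is nonzero because $J\neq 0$, so simplicity of $A$ forces $AJA=A$. Cutting this down by $e$ on both sides then gives
\[
eAe=e(AJA)e=(eAe)\,J\,(eAe)\subseteq J,
\]
where the middle equality uses $J=eJe$ and the final inclusion is just the defining property of the two-sided ideal $J$ of $eAe$. Thus $eAe\subseteq J\subseteq eAe$, that is $J=eAe$, and $eAe$ is simple.

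Combining the two steps, $\Lambda ^{G}\cong e(\Lambda \ast G)e$ is simple, as claimed; the essential input, that $\Lambda \ast G$ is simple, is supplied verbatim by the preceding theorem. I expect the only genuinely delicate point to be the bookkeeping in the displayed line---verifying that $e(AJA)e$ really collapses to $(eAe)J(eAe)$ via the identities $ej=je=j$---together with checking that $e$ is a bona fide nonzero idempotent and that the corner it cuts out is exactly $\Lambda ^{G}$; the latter two facts are standard and are already invoked elsewhere in the paper.
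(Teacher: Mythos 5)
Your argument is correct and is essentially the paper's own proof: the paper likewise identifies $\Lambda ^{G}$ with the corner $e(\Lambda \ast G)e$, passes from a nonzero ideal $eIe$ to the two-sided ideal $(\Lambda \ast G)eIe(\Lambda \ast G)$, invokes the simplicity of $\Lambda \ast G$ from the preceding theorem to conclude this ideal is all of $\Lambda \ast G$, and cuts back down by $e$. Your write-up just makes the bookkeeping step $e(AJA)e=(eAe)J(eAe)$ explicit, which the paper leaves implicit.
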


\begin{proof}
Let $I$ be a non zero ideal of $\Lambda ^{G}.$Then $Ie$ $=eIe$ is a non zero
ideal of $\Lambda ^{G}e=e\Lambda \ast Ge$.

The ideal $\Lambda \ast GeIe\Lambda \ast G$ of $\Lambda \ast G$ is non zero
otherwise, e$\Lambda \ast GeIe\Lambda \ast Ge=eIe=0.$Therefore $\Lambda \ast
GeIe\Lambda \ast G$ = $\Lambda \ast G$ and $eIe=e\Lambda \ast GeIe\Lambda
\ast G$ $e=e\Lambda \ast Ge$.
\end{proof}

\begin{corollary}
Let A$_{n}$=C\TEXTsymbol{<}X$_{1}$,X$_{2}$,...X$_{n}$,Y$_{1}$,Y$_{2}$,...Y$%
_{n}$\TEXTsymbol{>}/\{[X$_{i}$,X$_{j}$],[Y$_{i}$,Y$_{j}$],[X$_{i}$,Y$_{j}$]-$%
\delta _{ij}$\} be a Weyl algebra and, $G$ a finite group of automorphisms
of $A_{n}.$Then the algebra of invariants $A_{n}^{G}$ is simple.
\end{corollary}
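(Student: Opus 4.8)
The plan is to obtain the corollary as an immediate application of the preceding theorem, whose only hypothesis is that the algebra in question be a simple $K$-algebra equipped with a finite group of automorphisms. Thus the entire content of the corollary reduces to the single classical fact that the Weyl algebra $A_{n}$ is simple; once this is in hand, setting $\Lambda =A_{n}$ in the previous theorem yields that $A_{n}^{G}$ is simple with no further work.

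To justify the simplicity of $A_{n}$, I would recall the standard argument from $[$Co$]$. Working with the PBW monomial basis in the $X_{i}$ and $Y_{j}$, the inner derivations $[X_{i},-]$ and $[Y_{j},-]$ act on $A_{n}$ as the partial derivatives $\partial /\partial Y_{i}$ and $-\partial /\partial X_{j}$. Given a nonzero two-sided ideal $I$, choose a nonzero element of minimal total degree; bracketing it successively with the generators $X_{i}$, $Y_{j}$ keeps the result inside $I$ while strictly lowering its degree, and in characteristic zero the scalar coefficients produced in this process are nonzero, so the procedure terminates at a nonzero constant lying in $I$. Hence $I=A_{n}$, proving $A_{n}$ simple. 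This is exactly the place where characteristic zero is used, since the factorials arising from iterated differentiation must not vanish.

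With simplicity established, I would invoke the theorem directly. The base field here is $C$, of characteristic zero, so $\left\vert G\right\vert $ is invertible and the averaging idempotent $e=1/\left\vert G\right\vert \sum_{g\in G}g$ appearing in the proof of that theorem is available; the argument there then shows $A_{n}^{G}=e(A_{n}\ast G)e$ is simple. There is essentially no genuine obstacle here: the corollary is a one-line deduction once the external input (simplicity of $A_{n}$) is cited. The only point requiring care is checking that the hypotheses of the theorem are met, namely that $G$ acts by $K$-algebra automorphisms and that $\left\vert G\right\vert $ is invertible in the coefficient field, both of which hold in the present setting.
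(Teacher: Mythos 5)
Your proposal is correct and matches the paper's route exactly: the paper also obtains this corollary as an immediate specialization of the preceding theorem (simplicity of $\Lambda ^{G}$ for $\Lambda $ simple), the only external input being the classical fact from $[$Co$]$ that $A_{n}$ is simple in characteristic zero. Your additional sketch of that classical fact, and your remark that $\left\vert G\right\vert $ must be invertible so that the averaging idempotent exists, are both consistent with what the paper implicitly assumes.
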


We can prove now the following:

\begin{theorem}
Let $\Lambda $ be a simple algebra over a field $K$ and, $G$ a finite group
of automorphisms of $\Lambda $. Then the algebra of invariants $\Lambda ^{G}$
and the skew group algebra $\Lambda \ast G$ are Morita equivalent.
\end{theorem}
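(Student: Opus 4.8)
The plan is to present $\Lambda^{G}$ as a corner ring of $\Lambda \ast G$ and then to verify that this corner is cut out by a \emph{full} idempotent, so that the standard corner-ring form of Morita equivalence applies. Assuming, as throughout, that $|G|$ is invertible in $K$, set $e=\frac{1}{|G|}\sum_{g\in G}g$. This is a nonzero idempotent of $KG\subseteq \Lambda\ast G$, and, exactly as used in the proof of the corollary in Section 2, the map $\lambda\mapsto e\lambda e=\lambda e$ identifies $\Lambda^{G}$ with $e(\Lambda\ast G)e$. Hence it suffices to show that $\Lambda\ast G$ is Morita equivalent to $e(\Lambda\ast G)e$.

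The key step is to prove that $e$ is full, i.e. $(\Lambda\ast G)e(\Lambda\ast G)=\Lambda\ast G$. The set $(\Lambda\ast G)e(\Lambda\ast G)$ is a two-sided ideal of $\Lambda\ast G$, and it is nonzero since $e=e\cdot e\cdot e$ belongs to it. By the theorem proved above, simplicity of $\Lambda$ forces $\Lambda\ast G$ to be simple; therefore every nonzero two-sided ideal is the whole ring, and $(\Lambda\ast G)e(\Lambda\ast G)=\Lambda\ast G$, as desired.

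Finally I would invoke the standard fact that a full idempotent $e$ in a ring $R$ yields a Morita equivalence between $R$ and $eRe$: the functor $Hom_{R}(Re,-)\cong e(-)$ from $Mod_{R}$ to $Mod_{eRe}$ and the functor $Re\otimes_{eRe}-$ in the other direction are mutually inverse equivalences, the multiplication maps $Re\otimes_{eRe}eR\to R$ and $eR\otimes_{R}Re\to eRe$ being isomorphisms precisely because $ReR=R$. Taking $R=\Lambda\ast G$ and $eRe\cong\Lambda^{G}$ then gives the equivalence $Mod_{\Lambda\ast G}\simeq Mod_{\Lambda^{G}}$, and hence the Morita equivalence of $\Lambda\ast G$ and $\Lambda^{G}$.

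The entire argument rests on the fullness of $e$, which is immediate from the simplicity of $\Lambda\ast G$ established earlier; the remaining statements are the routine corner-ring Morita theorem. I therefore expect no genuine obstacle, the only point requiring care being to cite the simplicity result at the correct place rather than to reprove it.
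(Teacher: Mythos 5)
Your proposal is correct and follows essentially the same route as the paper: both identify $\Lambda^{G}$ with the corner $e(\Lambda\ast G)e$ and derive the Morita equivalence from the fullness of $e$, which is forced by the simplicity of $\Lambda\ast G$ established in the preceding theorem. The only difference is presentational — the paper verifies by hand that $Hom_{\Lambda\ast G}(\Lambda\ast Ge,-)$ is dense, full and faithful (showing the multiplication maps $\Lambda\ast Ge\otimes_{e\Lambda\ast Ge}eM\rightarrow M$ are isomorphisms because their kernels and cokernels are killed by $e$), whereas you cite the standard corner-ring Morita theorem, which packages exactly that computation.
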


\begin{proof}
Consider the idempotent of $\Lambda \ast G$, $e=1/\left\vert G\right\vert 
\underset{\sigma \in G}{\sum }\sigma $. We prove first that the functor $%
F=Hom_{\Lambda \ast G}(\Lambda \ast Ge,-):Mod_{\Lambda \ast G}\rightarrow
Mod_{e\Lambda \ast Ge}$ has zero kernel.

A $\Lambda \ast G$-module $M$ is in the kernel of $F$ if and only if $eM=0,$%
this is: if and only if $\Lambda \ast Ge\Lambda \ast GM=0$ . But $\Lambda
\ast G$ simple implies $\Lambda \ast G=\Lambda \ast Ge\Lambda \ast G$ and $M$
is in the kernel of $F$ if and only if $M=0.$

The functor $F$ is always dense.

Let $f:M\rightarrow N$ be a map of $\Lambda \ast G$-modules and $%
ef:eM\rightarrow eN$ the restriction.

There is a commutative diagram:

$%
\begin{array}{ccc}
\Lambda \ast Ge\otimes \otimes _{e\Lambda \ast Ge}eM & \overset{\mu }{%
\rightarrow } & M \\ 
\downarrow \Lambda \ast Ge\otimes ef &  & f\downarrow \\ 
\Lambda \ast Ge\otimes _{e\Lambda \ast Ge}eN & \overset{\mu ^{\prime }}{%
\rightarrow } & N%
\end{array}%
$

with $\mu $ and $\mu ^{\prime }$ multiplication. The kernels of $\mu $ and $%
\mu ^{\prime }$ are annihilated by $e,$ hence they are zero and the cokernel
of $\mu $ is $M/\Lambda \ast Ge\Lambda \ast GM=0.$

Similarly for $\mu ^{\prime }.$It follows both $\mu $ and $\mu ^{\prime }$
are isomorphisms.

Therefore $ef=0$ implies $f=0$ and $F$ is faithful.

Let $g:M^{\prime }\rightarrow N^{\prime }$be a map of $e\Lambda \ast Ge$%
-modules. Since $F$ is dense, there exists $\Lambda \ast G$-modules $M$ and $%
N$ such that $eM\cong M^{\prime }$and $eN\cong N^{\prime }$and $g$ can be
identified with a map $g:eM\rightarrow eN.$There are maps:

$%
\begin{array}{ccc}
\Lambda \ast Ge\otimes \otimes _{e\Lambda \ast Ge}eM & \overset{\mu }{%
\rightarrow } & M \\ 
\downarrow \Lambda \ast Ge\otimes g &  &  \\ 
\Lambda \ast Ge\otimes _{e\Lambda \ast Ge}eN & \overset{\mu ^{\prime }}{%
\rightarrow } & N%
\end{array}%
$

with $\mu $, $\mu ^{\prime }$isomorphisms and the map $f=\mu ^{\prime
}\Lambda \ast Ge\otimes g\mu ^{-1}$is such that

$Hom_{\Lambda \ast G}(\Lambda \ast Ge,f)=ef=g.$

Therefore $Hom_{\Lambda \ast G}(\Lambda \ast Ge,-)$ is full.
\end{proof}

\begin{corollary}
Let A$_{n}$=C\TEXTsymbol{<}X$_{1}$,X$_{2}$,...X$_{n}$,Y$_{1}$,Y$_{2}$,...Y$%
_{n}$\TEXTsymbol{>}/\{[X$_{i}$,X$_{j}$],[Y$_{i}$,Y$_{j}$],[X$_{i}$,Y$_{j}$]-$%
\delta _{ij}$\} be a Weyl algebra and, $G$ a finite group of automorphisms
of $A_{n}.$Then the algebra of invariants $A_{n}^{G}$ and the skew group
algebra $A_{n}\ast G$ are Morita equivalent.
\end{corollary}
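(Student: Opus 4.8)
The plan is to view this statement as an immediate specialization of the preceding theorem, which asserts that for any simple $K$-algebra $\Lambda$ acted on by a finite group $G$, the invariant ring $\Lambda^{G}$ and the skew group algebra $\Lambda \ast G$ are Morita equivalent. Taking $\Lambda = A_{n}$, the entire argument reduces to verifying the single hypothesis that the Weyl algebra $A_{n}$ is a simple algebra; everything else is then supplied verbatim by that theorem.

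First I would record the classical fact that $A_{n}$ is simple over a field of characteristic zero, which is among the basic results collected in $[Co]$. One sees it directly as follows: if $I$ is a nonzero two-sided ideal and $0 \neq a \in I$ is written in Poincar\'{e}--Birkhoff--Witt normal form in the $X_{i}$ and $Y_{j}$, then the inner derivations $[X_{i},-]$ and $[Y_{j},-]$ act on normal forms as the formal operators $\partial/\partial Y_{i}$ and $-\partial/\partial X_{j}$; applying these repeatedly strictly lowers the degree and eventually yields a nonzero scalar lying in $I$, whence $I = A_{n}$. Characteristic zero is essential so that the factorials produced by iterated differentiation are invertible and no monomial is annihilated prematurely.

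Since here the base field is $\mathbb{C}$, which has characteristic zero, the order $|G|$ is invertible and the averaging idempotent $e = \tfrac{1}{|G|}\sum_{\sigma \in G}\sigma$ used in the proof of the preceding theorem is well defined, with $A_{n}^{G} \cong e(A_{n}\ast G)e$. All hypotheses of that theorem now hold, so its conclusion applies and $A_{n}^{G}$ is Morita equivalent to $A_{n}\ast G$. There is no genuine obstacle here: the only nontrivial input is the simplicity of $A_{n}$, and it is exactly this simplicity that forces $A_{n}\ast Ge\,A_{n}\ast G = A_{n}\ast G$, which is the identity the preceding theorem uses to show that $Hom_{A_{n}\ast G}(A_{n}\ast Ge,-)$ has zero kernel and hence defines the desired equivalence. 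Thus the proof amounts to citing simplicity of the Weyl algebra and invoking the general result, exactly as the two immediately preceding corollaries deduce simplicity of $A_{n}\ast G$ and of $A_{n}^{G}$.
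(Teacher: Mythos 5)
Your proposal is correct and follows exactly the route the paper intends: the corollary is stated without separate proof precisely because it is the specialization of the preceding theorem (Morita equivalence of $\Lambda ^{G}$ and $\Lambda \ast G$ for $\Lambda $ simple) to $\Lambda =A_{n}$, with the only input being the classical simplicity of the Weyl algebra in characteristic zero from $[$Co$]$, which you correctly identify and justify. Your added sketch of why $A_{n}$ is simple and the remark that $\left\vert G\right\vert $ is invertible over $C$ are accurate and harmless supplements to what the paper leaves implicit.
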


\begin{corollary}
Let $G$ be a finite subgroup of $Sl(2,C)$ acting as automorphism group of $%
A_{1}$. Then $A_{1}\ast G$ and $A_{1}^{G}$ are simple and Morita equivalent
to the deformed preprojective algebra. In particular the deformed
preprojective algebra is simple.
\end{corollary}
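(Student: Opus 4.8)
The plan is to assemble the corollary from results already established, treating it as a specialization of the general simple-algebra statements to $\Lambda = A_{1}$. The one external input I would invoke is the classical fact that the first Weyl algebra $A_{1}$ over a field of characteristic zero is simple (see $[$Co$]$). Granting this, I set $\Lambda = A_{1}$ in the three theorems proved above: the theorem that the skew group algebra of a simple algebra is simple gives that $A_{1}\ast G$ is simple; the theorem that the invariant ring of a simple algebra is simple gives that $A_{1}^{G}$ is simple. This settles the word ``simple'' in the statement for both algebras.

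Next I would produce the Morita equivalences. By the theorem asserting that for a simple $\Lambda$ the invariant ring $\Lambda^{G}$ and the skew group algebra $\Lambda\ast G$ are Morita equivalent, applied to $\Lambda = A_{1}$, I obtain that $A_{1}^{G}$ and $A_{1}\ast G$ are Morita equivalent. Independently, part iii) of the structure theorem for $A_{1}\ast G$ identifies $A_{1}\ast G$, up to Morita equivalence, with the deformed preprojective algebra $K\widehat{Q}/I$ attached to the Euclidean diagram $Q$ of the McKay correspondence for $G\subset Sl(2,C)$. Composing these two equivalences shows that $A_{1}^{G}$, $A_{1}\ast G$ and the deformed preprojective algebra lie in a single Morita class, which is the main assertion of the corollary.

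For the final clause I would use that simplicity is a Morita invariant: Morita equivalent rings have isomorphic lattices of two-sided ideals, so any ring Morita equivalent to a simple ring is itself simple. Since $A_{1}\ast G$ is simple and the deformed preprojective algebra is Morita equivalent to it, the deformed preprojective algebra is simple.

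The heart of the argument is pure assembly, so the only points I would take care over are that the hypothesis $G\subset Sl(2,C)$, needed to invoke part iii), is exactly the one under which $A_{1}\ast G$ was described by quiver and relations in the previous section, and that the algebra $K\widehat{Q}/I$ appearing there is genuinely the deformed preprojective algebra in the sense of Crawley-Boevey--Holland. The only genuinely non-formal ingredient is the simplicity of $A_{1}$ itself; once that is cited, no further computation is required.
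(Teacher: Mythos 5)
Your proposal is correct and is essentially the argument the paper intends: the corollary is stated without proof precisely because it is the assembly of the preceding three theorems (skew group algebra of a simple algebra is simple, invariants of a simple algebra are simple, and the Morita equivalence $\Lambda^{G}\sim\Lambda\ast G$ for simple $\Lambda$) specialized to $\Lambda=A_{1}$, combined with part iii) of the quiver-and-relations theorem identifying $A_{1}\ast G$ with the deformed preprojective algebra up to Morita equivalence. Your explicit citation of the classical simplicity of $A_{1}$ and the remark that simplicity is a Morita invariant make the implicit steps precise; nothing further is needed.
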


For more results on the deformed preprojective algebra we refer to [C-BH].

We analyze next the relations between $\func{mod}_{B_{n}\ast G}$ and $\func{%
mod}_{B_{n}^{G}}$ for a special class of subgroups of automorphisms of $%
B_{n} $, which include finite products of finite subgroups of $Sl(2,C$).

We will need the following:

\begin{theorem}
Let $\Lambda $ be a noetherian algebra over a field $K$ and, $G$ a finite
group of automorphisms of $\Lambda $ such that $\Lambda $ is finitely
generated as left module over the ring of invariants $\Lambda ^{G}$. Then $%
\Lambda ^{G}$ is a noetherian algebra.
\end{theorem}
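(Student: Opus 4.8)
The plan is to deduce the Noetherian property of $\Lambda^{G}$ from that of $\Lambda$ by exploiting the finite-generation hypothesis, without ever inverting $|G|$. Write $R=\Lambda^{G}$. The first observation is that $R$ sits inside $\Lambda$ as a left $R$-submodule (it is closed under left multiplication by its own elements), so if I can show that $\Lambda$ is a \emph{Noetherian} left $R$-module, then $R$, being a submodule of a Noetherian module, is itself a Noetherian left $R$-module, i.e. $R$ is left Noetherian; the right-handed statement follows by the symmetric argument, since $\Lambda$ is a two-sided Noetherian algebra. Thus the whole problem reduces to the single assertion that the module-finite ring extension $R\subseteq\Lambda$ makes $\Lambda$ a Noetherian $R$-module.

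To organize this I would first bring in the skew group algebra $S=\Lambda\ast G$, which is already central to the paper. Since $G$ is finite, $S$ is free of rank $|G|$ as a left (and right) $\Lambda$-module, so $S$ is Noetherian whenever $\Lambda$ is. Moreover $\Lambda$ is a cyclic left $S$-module via $(\sum_{g}\lambda_{g}u_{g})\cdot a=\sum_{g}\lambda_{g}\,g(a)$, generated by $1$, hence a Noetherian left $S$-module; and a direct computation identifies $\mathrm{End}_{S}(\Lambda)$ with right multiplication by $R$, exhibiting $\Lambda$ as a natural $(S,R)$-bimodule. This is the structural reason the invariants are controlled by $\Lambda$: $R$ is recovered intrinsically as an endomorphism ring, and the trace map $\tau(a)=\sum_{g}g(a)$ provides an $(R,R)$-bimodule map $\Lambda\to R$ even in modular characteristic, where $\tau$ need not split the inclusion $R\hookrightarrow\Lambda$.

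The heart of the matter is upgrading \emph{finitely generated} to \emph{Noetherian} for $\Lambda$ as a left $R$-module; this is the Eakin--Nagata phenomenon, and it is the unique place where the finite-generation hypothesis is used in an essential way. Writing $\Lambda=\sum_{i=1}^{n}R f_{i}$, I would apply the Eakin--Nagata--Formanek theorem to the module-finite ring extension $R\subseteq\Lambda$ with $\Lambda$ Noetherian, which yields precisely that $\Lambda$ is a Noetherian $R$-module. The role of the skew group algebra here is to supply the finiteness that the noncommutative form of that theorem requires: because every $g\in G$ fixes $R$ pointwise, each $u_{g}$ commutes with $R$, and from $\Lambda=\sum_{i}R f_{i}$ one gets $S=\sum_{i,g}R f_{i}u_{g}$, so $S$ is module-finite over $R$ and two-sided Noetherian.

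The main obstacle is this very upgrade. In the noncommutative setting the left $R$-submodules of $\Lambda$ are far more numerous than its left ideals---they need be neither $\Lambda$-stable nor $G$-stable---so one cannot recover an ascending chain of $R$-submodules merely by intersecting the left ideals of $\Lambda$ that they generate back into $R$; the naive contraction, and likewise $\tau$, only returns $|G|$ times an element and is therefore blind to the characteristic. Consequently the one-sided finite-generation hypothesis is genuinely delicate, and I expect the decisive technical point to be verifying that $R\subseteq\Lambda$ meets the finiteness conditions---in particular the requisite two-sided finiteness, which I would extract from the finiteness of $S=\Lambda\ast G$ over $R$---under which the Eakin--Nagata--Formanek descent is valid. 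Granting that, the submodule observation of the first paragraph closes the argument at once, and the same reasoning on the right shows $\Lambda^{G}$ is Noetherian on both sides.
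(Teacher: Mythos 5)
Your opening reduction is fine as far as it goes: if $\Lambda$ were a \emph{Noetherian} left $\Lambda^{G}$-module, then $R=\Lambda^{G}$, being a left $R$-submodule of it, would be left Noetherian, and symmetrically on the right. But the central step --- upgrading ``$\Lambda$ finitely generated as a left $R$-module'' to ``$\Lambda$ Noetherian as a left $R$-module'' --- is exactly where the proposal breaks down. The Eakin--Nagata--Formanek descent you invoke is \emph{not} valid for arbitrary module-finite extensions of noncommutative rings: the Formanek--Jategaonkar theorem requires the extension to be finite \emph{centralizing} (later refinements allow normalizing generators, i.e. $\Lambda=\sum R f_{i}$ with $f_{i}R=Rf_{i}$), and without such a hypothesis the descent of the Noetherian property is false --- there are module-finite extensions $R\subseteq B$ with $B$ Noetherian and $R$ not Noetherian. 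In your setting the generators $f_{i}$ of $\Lambda$ over $R$ have no reason to centralize or normalize $R$, and passing to $S=\Lambda \ast G$ does not repair this: the group units $u_{g}$ do centralize $R$ (since $g$ fixes $R$ pointwise), but the actual module generators $f_{i}u_{g}$ satisfy $(f_{i}u_{g})r=f_{i}r\,u_{g}$, which differs from $r f_{i}u_{g}$ in general, so $S$ is again not a centralizing or normalizing extension of $R$. Your text concedes the point (``Granting that\dots''): the decisive verification is deferred, and there is no apparent way to supply it. The side observations ($S$ Noetherian, $\Lambda$ cyclic over $S$, $\mathrm{End}_{S}(\Lambda)\cong R$, the trace $\tau$) are correct but none of them feeds the missing step.

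The paper's proof goes a different, elementary way, and it uses precisely the averaging you set out to avoid (legitimate here, since the paper works over a field of characteristic zero). Given a left ideal $I$ of $\Lambda^{G}$, the left ideal $\Lambda I$ of $\Lambda$ is finitely generated because $\Lambda$ is Noetherian, hence finitely generated as a $\Lambda^{G}$-module (as $\Lambda$ is module-finite over $\Lambda^{G}$), and one may take generators of the form $b_{i}u_{i}$ with $b_{i}\in \Lambda$, $u_{i}\in I$. Writing $x\in I$ as $x=\sum_{i}c_{i}b_{i}u_{i}$ with $c_{i}\in \Lambda^{G}$ and averaging over $\sigma\in G$ --- using that $x$, $c_{i}$, $u_{i}$ are all invariant --- gives $x=\sum_{i}c_{i}t_{i}u_{i}$ with $t_{i}=\frac{1}{\left\vert G\right\vert}\sum_{\sigma\in G}b_{i}^{\sigma}\in \Lambda^{G}$, so $I=\sum_{i}\Lambda^{G}u_{i}$ is finitely generated. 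This Reynolds-operator step is exactly the substitute for the Eakin--Nagata descent that your argument lacks; it is where $1/\left\vert G\right\vert$ is used, and you correctly note that the unnormalized trace is useless in modular characteristic --- but you then offer nothing in its place. As written, your proof has a genuine gap at its load-bearing step; a characteristic-free version would require either a new idea or additional hypotheses of centralizing/normalizing type on the extension $\Lambda^{G}\subseteq \Lambda$.
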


\begin{proof}
Let $I$ be a left ideal of $\Lambda ^{G}$. Then $\Lambda I$ is a left ideal
and by hypothesis, $\Lambda I$ is finitely generated as $\Lambda $-module
and $\Lambda $ finitely generated over $\Lambda ^{G}$ implies $\Lambda I$ is
a finitely generated $\Lambda ^{G}$-module. Let \{$\overset{n_{j}}{\underset{%
i=1}{\sum }}b_{i}^{j}u_{i}^{j}$, $1\leq j\leq m\mid b_{i}^{j}\in \Lambda $, $%
u_{i}^{j}\in I$\} be a set of generators of $\Lambda I$. Then \{$%
b_{i}^{j}u_{i}^{j}$, $1\leq j\leq m\mid b_{i}^{j}\in \Lambda $, $%
u_{i}^{j}\in I$\} is also a set of generators of $\Lambda I,$after re
indexing we have a set of generators $\{b_{i}u_{i}$, $1\leq i\leq n\mid
b_{i}\in \Lambda $, $u_{i}\in I\}$ . Let $x$ be an element of $I$. Then for $%
1\leq i\leq n$, there exists $c_{i}\in \Lambda ^{G}$ such that $x=\underset{%
i=1}{\overset{m}{\sum }}c_{i}b_{i}u_{i}$. Then for $\sigma \in G$ , $%
x=x^{\sigma }=\underset{i=1}{\overset{m}{\sum }}c_{i}^{\sigma }b_{i}^{\sigma
}u_{i}^{\sigma }=\underset{i=1}{\overset{m}{\sum }}c_{i}b_{i}^{\sigma }u_{i}$
and $x=1/\left\vert G\right\vert \underset{\sigma \in G}{\sum }\underset{i=1}%
{\overset{m}{\sum }}c_{i}b_{i}^{\sigma }u_{i}=1/\left\vert G\right\vert 
\underset{i=1}{\overset{m}{\sum }}\underset{\sigma \in G}{\sum }%
c_{i}b_{i}^{\sigma }u_{i}=\underset{i=1}{\overset{m}{\sum }}\underset{}{%
c_{i}(1/\left\vert G\right\vert \text{ }\underset{\sigma \in G}{\sum }}%
b_{i}^{\sigma })u_{i}=\underset{i=1}{\overset{m}{\sum }}c_{i}t_{i}u_{i}$,
with $c_{i}$, $t_{i}\in \Lambda ^{G}$.

Therefore: $I$ is finitely generated as $\Lambda ^{G}$-module.
\end{proof}

\begin{proposition}
Let $G$ be a finite group of grade preserving automorphisms of $B_{n}$
fixing $Z.$Then $B_{n}$ is a finitely generated left (right) $B_{n}^{G}$%
-module.
\end{proposition}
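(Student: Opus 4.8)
The plan is to pass to the associated graded algebra with respect to the filtration by $X,Y$-degree and reduce the statement to the classical commutative invariant theory of Emmy Noether. Let $F_p\subseteq B_n$ denote the span of the PBW monomials $X^\alpha Y^\beta Z^k$ with $|\alpha|+|\beta|\le p$. Because the only nontrivial relation $[X_i,Y_j]=\delta_{ij}Z^2$ has $Z^2$ of $X,Y$-degree $0$, all commutators vanish in the associated graded, so $\mathrm{gr}_F B_n$ is the commutative polynomial ring $P=K[X_1,\dots,X_n,Y_1,\dots,Y_n,Z]$ (it inherits the PBW basis and is commutative, hence is the full polynomial ring). First I would check that $G$ preserves this filtration: since every $\sigma\in G$ fixes $Z$ and sends $X_j,Y_k$ to $(\text{linear in the }X,Y)+(\text{multiple of }Z)$, one has $\sigma(F_p)\subseteq F_p$, and the induced $\bar\sigma$ is a graded automorphism of $P$ fixing $Z$ and acting by the invertible block $A$ on the span of the $X_i,Y_i$. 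Thus $G$ acts on $P$ through a finite group of graded automorphisms.

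The key step is to identify the invariants of the associated graded with the associated graded of the invariants. Since $\mathrm{char}\,K=0$, the Reynolds operator $\mathcal R=\frac1{|G|}\sum_{\sigma\in G}\sigma$ is a well-defined idempotent $K$-linear projection of $B_n$ onto $B_n^{G}$, and it preserves each $F_p$ because each $\sigma$ does. Hence it induces on $\mathrm{gr}_F B_n=P$ the idempotent $\mathrm{gr}\,\mathcal R=\frac1{|G|}\sum_{\sigma}\bar\sigma$, which is precisely the Reynolds operator for the induced action on $P$; its image is $P^{G}$. Taking the induced filtration on $B_n^{G}$, this shows $\mathrm{gr}_F(B_n^{G})=\mathrm{im}(\mathrm{gr}\,\mathcal R)=P^{G}$.

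Now I would invoke Noether's theorem: the finite group $G$ acts on the commutative, finitely generated $K$-algebra $P$ in characteristic zero, so $P$ is a finitely generated module over $P^{G}$. Choose finitely many homogeneous module generators $\bar g_1,\dots,\bar g_r$ of $P$ over $P^{G}$, with $1$ among them, and lift them to $g_1,\dots,g_r\in B_n$. A standard filtration induction then shows these lifts generate $B_n$ over $B_n^{G}$: given $b\in F_p$, its leading symbol in $P$ can be written $\sum_i \bar c_i\bar g_i$ with $\bar c_i\in P^{G}=\mathrm{gr}_F(B_n^{G})$; lifting each $\bar c_i$ to $c_i\in B_n^{G}$, the element $b-\sum_i c_i g_i$ lies in $F_{p-1}$, and one descends by induction (the base case $F_0=K[Z]\subseteq B_n^{G}\cdot 1$ holds since $Z$ is fixed). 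Therefore $B_n=\sum_{i=1}^{r}B_n^{G}g_i$ is finitely generated as a left $B_n^{G}$-module; since $P$ is commutative the identical argument with right-hand coefficients gives the right-module statement.

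The main obstacle is the middle step, the commutation $\mathrm{gr}_F(B_n^{G})=(\mathrm{gr}_F B_n)^{G}$: it is exactly here that one uses the invertibility of $|G|$ in $K$ to produce the filtration-preserving Reynolds projection and to guarantee that passing to the associated graded does not shrink the invariants. Once this identification is in place, both the reduction to the commutative Noether theorem and the lifting of generators are routine.
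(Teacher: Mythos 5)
Your proof is correct, but it reaches Noether's finiteness theorem by a different commutative reduction than the paper's. The paper quotients by the central, $G$-fixed element $Z$: it takes homogeneous generators $c_1,\dots,c_t$ of $C_n=B_n/ZB_n$ (the polynomial ring in $2n$ variables) over $C_n^G$, lifts them to homogeneous $b_1,\dots,b_t\in B_n$, and for a homogeneous $b$ of degree $d$ writes $b=\sum_i r_i^0b_i+Z\mu_1$ with $\deg\mu_1=d-1$, iterating until only a power of $Z$ remains; since each $Z^jr_i^j$ lies in $B_n^G$, the elements $1,b_1,\dots,b_t$ generate $B_n$ over $B_n^G$. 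You instead pass to the associated graded of the Bernstein-type filtration by $X,Y$-degree, obtaining the polynomial ring in all $2n+1$ variables, and you must in addition establish $\mathrm{gr}_F(B_n^G)=(\mathrm{gr}_FB_n)^G$, which you do correctly with the filtration-preserving Reynolds operator (the paper's argument quietly needs the analogous fact that $B_n^G$ maps onto $C_n^G$, also a Reynolds-operator statement in characteristic zero). The paper's route is shorter because it exploits the grading and the central $Z$; yours is more robust, since the same filtered-graded transfer would prove finiteness of $A_n$ over $A_n^G$ for the ungraded Weyl algebra, where there is no grading and no $Z$ to quotient by. Both inductions terminate for the same reason (a homogeneous element has finite degree, respectively lies in some $F_p$), and both use the hypothesis $\sigma(Z)=Z$ in the same place, namely to put $K[Z]$ inside $B_n^G$ and close the base case.
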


\begin{proof}
Since $G$ is a group of grade preserving automorphisms $B_{n}^{G}$ is a
positively graded ring with ($B_{n}^{G})_{i}=\{b\in (B_{n})_{i}\mid \sigma
(b)=b$ for all $\sigma \in G\}$ and the inclusion $j_{1}:B_{n}^{G}%
\rightarrow B_{n}$ is a homomorphism of graded $C$-algebras.

Let $C_{n}$ be the polynomial ring in $2n$ variables, $G$ acts as an
automorphism group of $C_{n}$, let $C_{n}^{G}$ be the ring of invariants and 
$j_{0}:C_{n}^{G}\rightarrow C_{n}$ the inclusion as a graded subring.

We have a commutative exact diagram:

$%
\begin{array}{ccccccc}
0\rightarrow & ZB_{n}^{G} & \rightarrow & B_{n}^{G} & \rightarrow & C_{n}^{G}
& \rightarrow 0 \\ 
& \downarrow j_{2} &  & \downarrow j_{1} &  & \downarrow j_{0} &  \\ 
0\rightarrow & ZB_{n} & \rightarrow & B_{n} & \rightarrow & C_{n} & 
\rightarrow 0%
\end{array}%
$

We know $C_{n}$ is a finitely generated $C_{n}^{G}$-module. $[]$ and we can
choose homogeneous generators $c_{1}$, $c_{2}$,... $c_{t}$ of $C_{n}$ as $%
C_{n}^{G}$-module . Let $b_{1}$, $b_{2}$,... $b_{t}$ be homogeneous elements
of $B_{n}$ such that $b_{i}+ZB_{n}=c_{i}$ for $1\leq i\leq t$.

Let $b$ be an element of $B_{n}$ of degree $d.$Then $b+ZB_{n}=\underset{i=1}{%
\overset{t}{\sum }}r_{i}^{0}b_{i}+ZB_{n}$, with $r_{i}^{0}\in B_{n}^{G}$ and 
$b=\underset{i=1}{\overset{t}{\sum }}r_{i}^{0}b_{i}+Z\mu _{1}$ .

Since $b,b_{1}$, $b_{2}$,... $b_{t}$ are homogeneous we can choose $%
r_{i}^{0} $ and $\mu _{1}$ homogeneous elements with degree$r_{i}^{0}+$%
degree $b_{i}$= $d$ and degree$\mu _{1}=d-1.$

Then $\mu _{1}=\underset{i=1}{\overset{t}{\sum }}r_{i}^{1}b_{i}+Z\mu _{2}$
with $\mu _{2}$ homogeneous of degree $d-2.$Continuing by induction we
obtain $\mu _{i}$ homogeneous of degree $d-i$, in particular $\mu _{d}$ has
degree zero, which mans it is a constant and we get:

$b=\underset{i=1}{\overset{t}{\sum }}r_{i}^{0}b_{i}+\underset{j=1}{\overset{%
d-1}{\sum }}\underset{i=1}{\overset{t}{\sum }}$ $Z^{j}r_{i}^{j}b_{i}+Z^{d}k$%
, $k$ a complex number. Then $b=\underset{i=1}{\overset{t}{\sum }}(\underset{%
j=0}{\overset{d-1}{\sum }}Z^{j}r_{i}^{j})b_{i}+Z^{d}k$, with $%
Z^{j}r_{i}^{j}\in B_{n}^{G}$ and $1,$ $b_{1}$, $b_{2}$,... $b_{t}$ generate $%
B_{n}$ as left $B_{n}^{G}$-module.
\end{proof}

\begin{corollary}
Let $G$ be a finite group of grade preserving automorphisms of $B_{n}$
fixing $Z.$Then $B_{n}^{G}$ is a noetherian algebra.
\end{corollary}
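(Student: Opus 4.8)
The plan is to obtain the result as an immediate consequence of the two preceding statements, namely the Theorem asserting that the invariant ring $\Lambda^{G}$ of a noetherian algebra $\Lambda$ is noetherian whenever $\Lambda$ is finitely generated as a left $\Lambda^{G}$-module, and the Proposition asserting that $B_{n}$ is finitely generated as a left $B_{n}^{G}$-module. Taking $\Lambda = B_{n}$, so that $\Lambda^{G} = B_{n}^{G}$, the Proposition supplies one of the two hypotheses of the Theorem for free. Thus the only additional fact I would need to verify is that $B_{n}$ itself is a (left) noetherian algebra.

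To establish noetherianity of $B_{n}$ I would invoke its Poincar\'{e}--Birkhoff--Witt basis, already recorded at the beginning of the paper: the monomials in $X_{1},\dots,X_{n},Y_{1},\dots,Y_{n},Z$ (in a fixed order) form a $K$-basis, and the defining relations express each commutator $[X_{i},Y_{j}]$ as $\delta_{ij}Z^{2}$ and make all remaining generators commute with $Z$ and pairwise in the appropriate pattern. Consequently $B_{n}$ can be built up from $K$ as an iterated Ore (skew-polynomial) extension, adjoining the generators one at a time with an automorphism and a derivation at each step. Since an Ore extension of a noetherian ring is again noetherian, and $K$ is noetherian, it follows by induction on the number of generators that $B_{n}$ is both left and right noetherian. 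Alternatively, one may simply cite the structural results of $[\text{Co}]$, $[\text{MMo}]$ where this is known.

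With noetherianity of $B_{n}$ in hand, I would apply the Theorem directly: $B_{n}$ is noetherian and, by the Proposition, finitely generated as a left $B_{n}^{G}$-module, so its ring of invariants $B_{n}^{G}$ is noetherian. I expect no real obstacle here, as both ingredients are already proved in the excerpt; the one point requiring care is ensuring that the grading and the hypothesis $\sigma(Z)=Z$ used in the Proposition match the setup of the Theorem, but the Theorem is stated for an arbitrary finite group of automorphisms of a noetherian $\Lambda$, so the grade-preserving, $Z$-fixing assumption on $G$ is only needed to run the Proposition and imposes no further condition. The main conceptual step is therefore just the identification of $B_{n}$ as noetherian via its PBW/Ore-extension structure; everything else is the composition of the two cited results.
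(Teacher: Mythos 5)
Your proposal is correct and follows exactly the route the paper intends: the corollary is stated immediately after the Theorem on invariants of noetherian algebras and the Proposition that $B_{n}$ is a finitely generated left $B_{n}^{G}$-module, and is meant to be their direct composition. Your additional verification that $B_{n}$ is noetherian (via the PBW basis, realizing $B_{n}$ as an iterated Ore extension) supplies the one hypothesis the paper leaves implicit, and is the standard argument.
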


\begin{lemma}
Let $B_{n}$ be the homogenized Weyl $K$-algebra over an infinite field $H$.
Then $\underset{c\in K}{\cap }(Z-c)B_{n}=0$.
\end{lemma}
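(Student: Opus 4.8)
The plan is to exploit the Poincaré--Birkhoff--Witt basis of $B_n$ recalled at the beginning of the paper, namely the monomials $X^\alpha Y^\beta Z^k$. Since the defining relations $[X_i,Z]=[Y_i,Z]=0$ make $Z$ central (indeed $K[Z]$ is the center, as cited from [MMo]), I would regard $B_n$ not merely as a $K$-vector space but as a \emph{free module over the central subring} $K[Z]$, with basis the set of monomials $\{X^\alpha Y^\beta\}$ indexed by pairs of multi-indices $(\alpha,\beta)$. Concretely, every element $b\in B_n$ then has a unique expression
$$ b=\sum_{\alpha,\beta} p_{\alpha,\beta}(Z)\,X^\alpha Y^\beta, $$
with each $p_{\alpha,\beta}\in K[Z]$ and only finitely many of them nonzero; uniqueness is exactly the PBW property, the point being that all powers of the central element $Z$ may be collected into the coefficients without any reordering.

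Next I would describe the ideal $(Z-c)B_n$ in these coordinates. Because $Z-c$ is central, it commutes past every $X_i$ and $Y_i$, so $(Z-c)B_n$ respects the above decomposition:
$$ (Z-c)B_n=\bigoplus_{\alpha,\beta}(Z-c)K[Z]\,X^\alpha Y^\beta. $$
Consequently $b$ lies in $(Z-c)B_n$ if and only if each coefficient $p_{\alpha,\beta}(Z)$ is divisible by $Z-c$ in the principal ideal domain $K[Z]$, that is, if and only if $p_{\alpha,\beta}(c)=0$ for all $\alpha,\beta$.

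Finally I would intersect over all $c\in K$. By the previous step, an element $b=\sum_{\alpha,\beta}p_{\alpha,\beta}(Z)X^\alpha Y^\beta$ belongs to $\bigcap_{c\in K}(Z-c)B_n$ precisely when, for every pair $(\alpha,\beta)$, the univariate polynomial $p_{\alpha,\beta}$ vanishes at every point of $K$. Since $K$ is infinite, a nonzero polynomial has only finitely many roots, so each $p_{\alpha,\beta}$ must be the zero polynomial; hence $b=0$, as required.

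The only step requiring genuine care --- the main obstacle --- is the first one: establishing that $B_n$ is \emph{free} as a $K[Z]$-module on the monomials $X^\alpha Y^\beta$, i.e.\ that the representation of each $b$ is unique. This is an immediate consequence of the PBW theorem for $B_n$ cited from [MMo] combined with the centrality of $Z$, so no independent argument is needed. Everything after that is the elementary fact that over an infinite field a polynomial with infinitely many roots vanishes identically.
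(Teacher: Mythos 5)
Your proof is correct and follows essentially the same route as the paper: both arguments rest on the PBW basis together with the centrality of $Z$, so that membership in $(Z-c)B_{n}$ is detected by evaluating the $Z$-dependence at $c$, and both conclude from the infinitude of $K$ that a polynomial of bounded $Z$-degree cannot be divisible by infinitely many distinct factors $Z-c$. The only (cosmetic) difference is that you package the element as a tuple of honest univariate polynomials $p_{\alpha ,\beta }(Z)$ and invoke the finitely-many-roots fact coefficientwise, whereas the paper iterates the division $g=q\,(Z-d)+g(X,Y,d)$ to extract successive linear factors and contradicts the degree bound directly.
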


\begin{proof}
Since $B_{n}$ has a Poincare-Birkoff basis with $Z$ in the center, any
element of $B_{n}$ is a polynomial $g/X,Y,Z)$ in $X%
{\acute{}}%
s,$ $Y%
{\acute{}}%
s$ and $Z$. Given $d\in K$ $Z-d$ divides $g$ if and only if $g(X,Y,d)=0.$

It is clear that $g(X,Y,Z)=q(X,Y,Z)(Z-d)$ implies $g(X,Y,d)=0.$ We can write 
$g$ as a polynomial in $Z,$ $%
g(X,Y,Z)=g_{0}(X,Y)+g_{1}(X,Y)Z+g_{2}(X,Y)Z^{2}+...g_{t}(X,Y)Z^{t}$ with $%
g_{i}(X,Y)$ polynomials in $X%
{\acute{}}%
s$ and $Y%
{\acute{}}%
s$.

Then $g_{i}(X,Y)Z^{i}=g_{i}(X,Y)((Z-d)+d)^{i}=g_{i}^{\prime
}(X,Y,Z)(Z-d)+g_{i}(X,Y)d^{i}$ and $g(X,Y,Z)=q(X,Y,Z)(Z-d)+g(X,Y,d).$%
Therefore $g(X,Y,d)=0$ implies $Z-d$ divides $g$.

Assume now $0\neq h\in \underset{c\in K}{\cap }(Z-c)B_{n}.$ Then $%
h=(Z-c_{1})q_{1}=(Z-c_{2})f$ with $c_{1}\neq c_{2}.$Then $%
(c_{2}-c_{1})q_{1}(X,Y,c_{2})=(c_{2}-c_{2})f=0$ implies $Z-c_{2}$ divides $%
q_{1}(X,Y,Z)$ and $h=(Z-c_{1})(Z-c_{2})q_{2}.$

Assume $h$ is a polynomial in $Z$ of degree $m.$Continuing by induction we
obtain $h=(Z-c_{1})(Z-c_{2})...(Z-c_{m+1})q_{m+1}$ a contradiction.

Therefore: $\underset{c\in K}{\cap }(Z-c)B_{n}=0$.
\end{proof}

In what remains of the section we want to extend the theorems of the
previous subsection to the homogenized Weyl algebras, the following result
will be crucial:

\begin{proposition}
Let $B_{n}$ =C\TEXTsymbol{<}X$_{1}$,X$_{2}$,...X$_{n}$,Y$_{1}$,Y$_{2}$,...Y$%
_{n}$,Z\TEXTsymbol{>}/\{[X$_{i}$,X$_{j}$],[Y$_{i}$,Y$_{j}$],[X$_{i}$,Y$_{j}$%
]-$\delta _{ij}$Z$^{2}$,[X$_{i}$,Z],[Y$_{i}$,Z]\} be the homogenized Weyl
algebra in 2n+1 variables, $G$ a finite group of grade preserving
automorphisms of $B_{n}$ and $e$ the idempotent of $B_{n}\ast G$, $%
e=1/\left\vert G\right\vert \underset{\sigma \in G}{\sum }\sigma $.
Moreover, assume $B_{n}$ satisfies the following conditions:

i) For all $\sigma \in G$, $\sigma (Z)=Z.$

ii) The $C$-subspace $V=\overset{n}{\underset{i=1}{\oplus }}CX_{i}\oplus 
\overset{n}{\underset{j=1}{\oplus }}CY_{j}$ of $B_{n}$ is $G$- invariant.

iii) If $v\in V$, $v\neq 0$ and $\sigma (v)=v$, then $\sigma =1.$

Then $B_{n}\ast G/B_{n}\ast GeB_{n}\ast G$ is a finite dimensional $C$%
-algebra.
\end{proposition}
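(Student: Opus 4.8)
The plan is to use the central element $Z$ to reduce the statement to two facts, one coming from the previous section and one from the Weyl algebra $A_n$. Since $Z$ lies in the center of $B_n$ (its center is $K[Z]$) and is fixed by $G$ by condition (i), it is central in $B_n\ast G$; hence it is central in the quotient $Q:=B_n\ast G/B_n\ast GeB_n\ast G$, and $ZQ$ is a two-sided ideal. I would prove (a) that $Z$ is nilpotent on $Q$, say $Z^NQ=0$, and (b) that $Q/ZQ$ is finite dimensional over $C$. Granting these, the finite chain $Q\supseteq ZQ\supseteq\cdots\supseteq Z^NQ=0$ has subquotients $Z^iQ/Z^{i+1}Q$ that are homomorphic images of $Q/ZQ$ under multiplication by $Z^i$, so $\dim_C Q\le N\dim_C(Q/ZQ)<\infty$, which is the assertion.

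For (b), observe $Q/ZQ=B_n\ast G/(ZB_n\ast G+B_n\ast GeB_n\ast G)$. Reducing modulo the central ideal $ZB_n\ast G$ gives $B_n\ast G/ZB_n\ast G\cong C_n\ast G$, where $C_n=B_n/ZB_n$ is the commutative polynomial ring in the $2n$ variables $X_1,\dots,X_n,Y_1,\dots,Y_n$, and the image of $e$ is the idempotent $\overline{e}=\frac1{|G|}\sum_{\sigma\in G}\sigma$ of $C_n\ast G$. Since the quotient map is surjective, the image of the ideal generated by $e$ is the ideal generated by $\overline{e}$, so $Q/ZQ\cong C_n\ast G/C_n\ast G\,\overline{e}\,C_n\ast G$. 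By conditions (ii) and (iii), $G$ acts on $V$, hence on $C_n=C[V]$, as a finite subgroup of $Gl(2n,C)$ in which no element different from the identity fixes a nonzero vector of $V$, i.e.\ has a fixed point. This is exactly the hypothesis of the previous section, whose argument shows that $C_n\ast G/C_n\ast G\,\overline{e}\,C_n\ast G$ embeds into a semisimple algebra and is therefore finite dimensional over $C$.

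For (a), I would invert the central element $Z$. Localization at $\{Z^k\}$ is exact and carries the two-sided ideal generated by $e$ to the ideal generated by $e$ in the localized ring, so $Q[Z^{-1}]\cong B_n[Z^{-1}]\ast G/(B_n[Z^{-1}]\ast G)\,e\,(B_n[Z^{-1}]\ast G)$. The substitution $x_i=Z^{-1}X_i$, $y_j=Z^{-1}Y_j$ yields $[x_i,y_j]=\delta_{ij}$ and identifies $B_n[Z^{-1}]$ with $A_n[Z,Z^{-1}]=A_n\otimes_{C}C[Z,Z^{-1}]$. By (i) and (ii), $G$ fixes $Z$ and maps each $x_i,y_j$ into the span of the $x$'s and $y$'s, so it restricts to a group of automorphisms of $A_n$; consequently $B_n[Z^{-1}]\ast G\cong(A_n\ast G)[Z,Z^{-1}]$, and $e$ corresponds to $\frac1{|G|}\sum_{\sigma\in G}\sigma$. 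Now $A_n$ is simple, so by the theorem of this section $A_n\ast G$ is simple, and since $e\neq0$ the nonzero two-sided ideal $A_n\ast G\,e\,A_n\ast G$ equals $A_n\ast G$; writing $1=\sum a_ieb_i$ in $A_n\ast G$, the same identity holds in the Laurent extension, so $e$ generates the unit ideal of $(A_n\ast G)[Z,Z^{-1}]$ and the quotient vanishes. Thus $Q[Z^{-1}]=0$, and for a central element this forces $Z^N\cdot1=0$ in $Q$, i.e.\ $Z^N=0$, which is (a).

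The main obstacle is step (a): one must verify that inverting $Z$ genuinely produces the skew group algebra of $A_n$, that is, that under (i)--(ii) the $G$-action descends to an automorphism group of $A_n$, and then invoke the simplicity of $A_n\ast G$ established in this section. The remaining steps are formal manipulations with the central element $Z$ together with the citation of the finite-dimensionality result from the previous section.
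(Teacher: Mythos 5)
Your argument is correct, but it is organized quite differently from the paper's. The paper proves the proposition by exhibiting $B_{n}\ast G$ as a subdirect product of simple and semisimple pieces: it takes all maximal ideals of $B_{n}$ lying over $ZB_{n}$ (giving the simple algebras $S_{i}\ast G$ and the factor $CG$ at the irrelevant ideal) together with the family $(Z-c)B_{n}$ for $c\neq 0$ (each giving a copy of $A_{n}\ast G$), shows via the lemma $\cap_{c\in K}(Z-c)B_{n}=0$ that the resulting map $\psi$ is injective, and then uses simplicity of $S_{i}\ast G$ and $A_{n}\ast G$ to conclude that $B_{n}\ast G/B_{n}\ast GeB_{n}\ast G$ embeds in $CG/CGe$. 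You instead filter the quotient $Q$ by powers of the central element $Z$: reduction mod $Z$ lands in $C_{n}\ast G/C_{n}\ast G\overline{e}C_{n}\ast G$, which the previous section already showed is finite dimensional, while inverting $Z$ identifies $B_{n}[Z^{-1}]\ast G$ with $(A_{n}\ast G)[Z,Z^{-1}]$ and the simplicity of $A_{n}\ast G$ forces $Q[Z^{-1}]=0$, hence $Z^{N}=0$ in $Q$. The shared ingredients are the same two pillars (simplicity of $A_{n}\ast G$ and the finite-dimensionality result for the polynomial case), but your localization step replaces the paper's infinite subdirect product and its intersection lemma, which is arguably cleaner; in exchange the paper's route yields the sharper conclusion $\dim_{C}Q\leq\dim_{C}CG(1-e)$, whereas your filtration only bounds $\dim_{C}Q$ by $N\cdot\dim_{C}(Q/ZQ)$. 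The points you flag as needing verification (that $x_{i}=Z^{-1}X_{i}$, $y_{j}=Z^{-1}Y_{j}$ satisfy the Weyl relations, that conditions (i)--(ii) make the $G$-action descend to $A_{n}$, and that localization at the central set $\{Z^{k}\}$ commutes with passing to the ideal generated by $e$) are all routine and do go through, using that $B_{n}$ is a domain with $Z$ central and fixed by $G$.
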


\begin{proof}
We will use the commutative exact diagram:

$%
\begin{array}{ccccccc}
0\rightarrow & ZB_{n}^{G} & \rightarrow & B_{n}^{G} & \rightarrow & C_{n}^{G}
& \rightarrow 0 \\ 
& \downarrow j_{2} &  & \downarrow j_{1} &  & \downarrow j_{0} &  \\ 
0\rightarrow & ZB_{n} & \rightarrow & B_{n} & \rightarrow & C_{n} & 
\rightarrow 0%
\end{array}%
$

Let \{$\overline{m}_{i}\}_{i\in I}$ be the set of maximal ideals of $%
C_{n}^{G}$ , for each $\overline{m}_{i}$ an ideal $\overline{n}_{i}$ of $%
C_{n}$ such that $C_{n}\cap \overline{n}_{i}=\overline{m}_{i}.$ Let $%
\overline{n}_{0}$= $(X_{1}$, $X_{2}$,... $X_{n}$, $Y_{1}$, $Y_{2}$,... $%
Y_{n})$ be the maximal irrelevant ideal of $C_{n}$ and $\overline{m}%
_{0}=C_{n}\cap \overline{n}_{0}$.

We saw in Proposition ?, that given any ideal $\overline{m}_{i}\neq 
\overline{m}_{0}$ , \{$\overline{n}_{i}^{\sigma }$\}$_{\sigma \in G}$ , such
that $\overline{n}_{i}^{\sigma }=\overline{n}_{i}^{\tau }$ implies $\sigma
=\tau $, is the set of maximal ideals of $C_{n}$ above $\overline{m}_{i}$.
In particular, the radical $\sqrt{\overline{m}_{i}C_{n}}$ =$\underset{\sigma
\in G}{\cap }\overline{n}_{i}^{\sigma }$ is a $G$-invariant ideal of $C_{n}$%
, and $\sqrt{\overline{m}_{0}C_{n}}=\overline{n}_{0}.$

By the isomorphism theorems, there exists maximal ideals \{ $m_{i}$\}$_{i\in
I}$ of $B_{n}^{G}$ containing $ZB_{n}^{G}$, such that $m_{i}/ZB_{n}^{G}\cong 
\overline{m}_{i}$ and maximal ideals \{$n_{i}$\}$_{i\in I}$ of $B_{n}$
containing $ZB_{n}$ such that $n_{i}/ZB_{n}\cong \overline{n}_{i}.$

From the isomorphisms: $n_{i}/ZB_{n}\cap B_{n}^{G}/ZB_{n}^{G}=n_{i}\cap
B_{n}^{G}/ZB_{n}^{G}=m_{i}/ZB_{n}^{G}$, we get $n_{i}\cap B_{n}^{G}=m_{i}$
and $n_{i}^{\sigma }\cap B_{n}^{G}=m_{i}$ for all $\sigma \in G.$ It follows 
$L_{i}=\underset{\sigma \in G}{\cap }n_{i}^{\sigma }$ is a $G$-invariant
ideal of $B_{n}$, with $L_{0}=n_{0}$.

We have $ZB_{n}\subseteq \underset{i\in I}{\cap }\underset{\sigma \in G}{%
\cap }n_{i}^{\sigma }=\underset{i\in I}{\cap }L_{i}$ and $\underset{i\in I}{%
\cap }L_{i}/ZB_{n}=\underset{i\in I}{\cap }\underset{\sigma \in G}{\cap }%
\overline{n}_{i}^{\sigma }=0.$Therefore: $\underset{i\in I}{\cap }\underset{%
\sigma \in G}{\cap }n_{i}^{\sigma }=ZB_{n}$.

It was proved in [], that for any $c\in C-\{0\}$ there is an isomorphism $%
B_{n}/(Z-c)B_{n}\cong A_{n},$ hence each $(Z-c)B_{n}$ is a maximal ideal of $%
B_{n}.$

Then we have: $\underset{i\in I}{\cap }\underset{\sigma \in G}{\cap }%
n_{i}^{\sigma }\cap \underset{c\in C-\{0\}}{\cap }(Z-c)B=\underset{c\in C}{%
\cap }(Z-c)B=0$.

We have a ring homomorphism:

$\psi :B_{n}\ast G\rightarrow \underset{i\in I-\{0\}}{\dprod }B_{n}\ast
G/L_{i}\ast G\times B_{n}\ast G/n_{0}\ast G\times \underset{c\in C-\{0\}}{%
\dprod }B_{n}\ast G/(Z-c)B_{n}\ast G$ whose kernel is zero.

By the isomorphism theorems: $B_{n}/L_{i}\cong $ $B_{n}/ZB_{n}/L_{i}/ZB_{n}%
\cong $ $C_{n}$ /$\sqrt{\overline{m}_{i}C_{n}}$ and $B_{n}/n_{0}\cong
B_{n}/ZB_{n}/n_{0}/ZB_{n}\cong C_{n}/\overline{n}_{0}\cong C.$

Therefore: $B_{n}\ast G/n_{0}\ast G\cong CG$ and for each $i\neq $0, $%
S_{i}\ast G=B_{n}\ast G/L_{i}\ast G\cong C_{n}/\sqrt{\overline{m}_{i}C_{n}}%
\ast G$ is a simple finite dimensional algebra, as we proved in Proposition
?.

Then we have an injective ring homomorphism:

$\psi :B_{n}\ast G\rightarrow \underset{i\in I-\{0\}}{\dprod }S_{i}\ast
G\times CG\times \underset{c\in C-\{0\}}{\dprod }A_{n}\ast G$

By the simplicity of $S_{i}\ast G$ and $A_{n}\ast G$ for the idempotent $e$
we have: $S_{i}\ast GeS_{i}\ast G=S_{i}\ast G$ and $A_{n}\ast GeA_{n}\ast
G=A_{n}\ast G$.

$\psi (e)=\overset{\wedge }{e}=((e),e,(e))$ is an idempotent of $\underset{%
i\in I-\{0\}}{\dprod }S_{i}\ast G\times CG\times \underset{c\in C-\{0\}}{%
\dprod }A_{n}\ast G$ such that:

$\underset{i\in I-\{0\}}{(\dprod }$S$_{i}$*G$\times $CG$\times \underset{%
c\in C-\{0\}}{\dprod }$A$_{n}$*G)$\overset{\wedge }{e}\underset{i\in I-\{0\}}%
{(\dprod }$S$_{i}$*G$\times $CG$\times \underset{c\in C-\{0\}}{\dprod }$A$%
_{n}$*G)$=$

$\underset{i\in I-\{0\}}{\text{(}\dprod }$S$_{i}$*G$\times $CGe$\times 
\underset{c\in C-\{0\}}{\dprod }$A$_{n}$*G and as in Therorem ? there is an
injective ring homomorphism:

$B_{n}\ast G/B_{n}\ast GeB_{n}\ast G\rightarrow CG/CGe.$

It follows $B_{n}\ast G/B_{n}\ast GeB_{n}\ast G$ is a finite dimensional $C$%
-algebra.
\end{proof}

We have all the ingredients to prove for the homogenized Weyl algebras,
theorem analogous to Theorem9 and Theorem 10, what was essential in the
proof of those theorems was the fact $C_{n}\ast G/C_{n}\ast GeC_{n}\ast G$
is a finite dimensional $C$-algebra and the fact $C_{n}$ is a finitely
generated $C_{n}^{G}$-module. Then the proof of the next two theorems
follows by similar arguments to those used in the proof of Theorem 9 and
Theorem 10 and we will skip it.

\begin{theorem}
Let $B_{n}$ =C\TEXTsymbol{<}X$_{1}$,X$_{2}$,...X$_{n}$,Y$_{1}$,Y$_{2}$,...Y$%
_{n}$,Z\TEXTsymbol{>}/\{[X$_{i}$,X$_{j}$],[Y$_{i}$,Y$_{j}$],[X$_{i}$,Y$_{j}$%
]-$\delta _{ij}$Z$^{2}$,[X$_{i}$,Z],[Y$_{i}$,Z]\} be the homogenized Weyl
algebra in 2n+1 variables, $G$ a finite group of grade preserving
automorphisms of $B_{n}$ and $e$ the idempotent of $B_{n}\ast G$, $%
e=1/\left\vert G\right\vert \underset{\sigma \in G}{\sum }\sigma $.
Moreover, assume $B_{n}$ satisfies the following conditions:

i) For all $\sigma \in G$, $\sigma (Z)=Z.$

ii) The $C$-subspace $V=\overset{n}{\underset{i=1}{\oplus }}CX_{i}\oplus 
\overset{n}{\underset{j=1}{\oplus }}CY_{j}$ of $B_{n}$ is $G$- invariant.

iii) If $v\in V$, $v\neq 0$ and $\sigma (v)=v$, then $\sigma =1.$

Then: writing $R=B_{n}\ast G$ and $T=eB_{n}\ast Ge\cong B_{n}^{G}$ and
denoting by \textsl{S}$_{R}$ and \textsl{S}$_{T}$ the categories of finite
dimensional $R$ and $T$-modules, respectively and by $\func{mod}_{R}$, $%
\func{mod}_{T}$, the categories of finitely generated, $R$ and $T$ -modules,
respectively. Then there is a commutative diagram of categories and functors:

$%
\begin{array}{ccc}
\func{mod}_{R} & \overset{Hom_{R}(\func{Re},-)}{\rightarrow } & \func{mod}%
_{T} \\ 
\downarrow \pi _{R} &  & \downarrow \pi _{T} \\ 
\func{mod}_{R}/\QTR{sl}{S}_{R} & \overset{H}{\rightarrow } & \func{mod}_{T}/%
\QTR{sl}{S}_{T}%
\end{array}%
$, with $H$ an equivalence.
\end{theorem}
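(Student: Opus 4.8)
The plan is to recognize that with $R=B_n\ast G$ and $T=eRe\cong B_n^{G}$ we are placed in exactly the abstract situation of the quotient theorem proved earlier (the one giving $\mathrm{Mod}_R/\mathsf{A}\cong\mathrm{Mod}_{eRe}$, where $\mathsf{A}=\{M:eM=0\}$), and that the concrete theorem established there for $C[X]\ast G$ used only two structural facts about the pair (big algebra, invariant ring): (a) $R/ReR$ is finite dimensional over $C$, and (b) the big algebra is module-finite over its invariants. Both are now available for $B_n$: (a) is precisely the preceding Proposition, that $B_n\ast G/B_n\ast G e B_n\ast G$ is a finite dimensional $C$-algebra, and (b) is the earlier Proposition, that $B_n$ is finitely generated as a left $B_n^{G}$-module. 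Since $B_n$ is noetherian and $G$ is finite, $R$ is noetherian, and by the Corollary $T=B_n^{G}$ is noetherian as well, so $\mathrm{mod}_R$ and $\mathrm{mod}_T$ are abelian, $\mathsf{S}_R\subseteq\mathrm{mod}_R$ and $\mathsf{S}_T\subseteq\mathrm{mod}_T$ are Serre subcategories, and the quotient categories and functors $\pi_R,\pi_T$ are defined as before.

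Next I would verify that $\mathrm{Hom}_R(Re,-)\cong e(-)$ actually carries $\mathrm{mod}_R$ into $\mathrm{mod}_T$ and has the two properties the $C[X]$-argument rests on. Here lies the one point that differs from $C[X]$: there $C[X]^{G}$ is central in $C[X]\ast G$, whereas $B_n^{G}$ need not be central in $R$. What survives, and suffices, is that $ew=we$ for every invariant $w$, because $ew=\tfrac1{|G|}\sum_\sigma \sigma w=\tfrac1{|G|}\sum_\sigma w^{\sigma}\sigma=w e$. Consequently, for $M\in\mathrm{mod}_R$ the subset $eM$ is a $B_n^{G}$-submodule of $M$ ($w\cdot em=(we)m=e(wm)\in eM$); since $R=\bigoplus_\sigma B_n\sigma$ is finitely generated as a left $B_n^{G}$-module by the Proposition, $M$ is finitely generated over the noetherian ring $B_n^{G}$, hence so is its submodule $eM$, and the $B_n^{G}$- and $eRe$-actions on $eM$ agree under $B_n^{G}\cong eRe$. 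Thus $eM\in\mathrm{mod}_T$. Using the generators $1,b_1,\dots,b_t$ of $B_n$ over $B_n^{G}$ exactly as in the remark preceding the $C[X]$-theorem, the multiplication epimorphism $\bigoplus_{i} B_n^{G}e\to B_n e=Re$ of $B_n^{G}$-$eRe$ bimodules shows $Re\otimes_{eRe}M$ is finite dimensional whenever $M\in\mathsf{S}_T$, and dually $eM\in\mathsf{S}_T$ whenever $M\in\mathsf{S}_R$. Finally, finite dimensionality of $R/ReR$ shows that an $M\in\mathrm{mod}_R$ has $eM=0$ iff $M$ is a finitely generated $R/ReR$-module, hence finite dimensional, so the kernel of $e(-)$ on $\mathrm{mod}_R$ is exactly $\mathsf{S}_R$, and the multiplication maps $\mu\colon Re\otimes_{eRe}eM\to M$ have kernel and cokernel in $\mathsf{S}_R$.

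With these inputs the equivalence is then proved verbatim as for $C[X]\ast G$: density of $H$ from density of $e(-)$; fullness by lifting a roof $eX\xleftarrow{s}W\xrightarrow{f}eY$ through the adjunction $Re\otimes_{eRe}-$, applying $e(-)$, and running the same commutative-exact-diagram chase built from $Re\otimes_{eRe}s$ to see its kernel and cokernel land in $\mathsf{S}_R$; and faithfulness by the identical diagram chase together with the Lemma that a map factoring through a finite dimensional module vanishes in $\mathrm{mod}_R/\mathsf{S}_R$. Commutativity of the square is the identity $e(-)=\mathrm{Hom}_R(Re,-)$ combined with the universal property of $\pi_R$.

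I expect the diagram chases of the third paragraph to be routine, since they are word-for-word those of the $C[X]$ case; the real work, and the main obstacle, is the passage from \emph{all} modules to \emph{finitely generated} modules in the second paragraph. This requires that $e(-)$ and $Re\otimes_{eRe}-$ preserve finite generation and that the killed subcategory be exactly $\mathsf{S}_R$ — facts that rest on Noetherianity of $B_n^{G}$ (the Corollary), on finite dimensionality of $R/ReR$ (the preceding Proposition), and, crucially, on the commutation $ew=we$ that replaces the centrality used in the polynomial case. This is precisely why those two finiteness results were proved beforehand.
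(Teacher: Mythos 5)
Your proposal follows the paper's own route exactly: the paper gives no separate proof of this theorem, stating only that it follows ``by similar arguments'' to the $C[X]\ast G$ case once one has the two inputs you identify, namely that $B_n\ast G/B_n\ast G\, e\, B_n\ast G$ is finite dimensional over $C$ and that $B_n$ is a finitely generated $B_n^{G}$-module (whence $B_n^{G}$ is noetherian). Your observation that the commutation $ew=we$ for invariants $w$ replaces the centrality of $C[X]^{G}$ used in the polynomial case is a genuine and necessary addition that the paper leaves implicit. One small repair: with \emph{left} generators $B_n=\sum_i B_n^{G}b_i$ the map $\bigoplus_i B_n^{G}e\to B_ne$, $(w_ie)\mapsto\sum_i w_ib_ie$, is not right $eRe$-linear unless the $b_i$ commute with all invariants, so it does not descend to the tensor products; use instead the right-module statement $B_n=\sum_i b_iB_n^{G}$ (also asserted in the paper's Proposition on finite generation), which gives $Re=\sum_i b_i\cdot eRe$ directly and exhibits $Re\otimes_{eRe}M$ as a quotient of $\bigoplus_i M$.
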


We have also the graded version:

\begin{theorem}
Let $B_{n}$ =C\TEXTsymbol{<}X$_{1}$,X$_{2}$,...X$_{n}$,Y$_{1}$,Y$_{2}$,...Y$%
_{n}$,Z\TEXTsymbol{>}/\{[X$_{i}$,X$_{j}$],[Y$_{i}$,Y$_{j}$],[X$_{i}$,Y$_{j}$%
]-$\delta _{ij}$Z$^{2}$,[X$_{i}$,Z],[Y$_{i}$,Z]\} be the homogenized Weyl
algebra in 2n+1 variables, $G$ a finite group of grade preserving
automorphisms of $B_{n}$ and $e$ the idempotent of $B_{n}\ast G$, $%
e=1/\left\vert G\right\vert \underset{\sigma \in G}{\sum }\sigma $.
Moreover, assume $B_{n}$ satisfies the following conditions:

i) For all $\sigma \in G$, $\sigma (Z)=Z.$

ii) The $C$-subspace $V=\overset{n}{\underset{i=1}{\oplus }}CX_{i}\oplus 
\overset{n}{\underset{j=1}{\oplus }}CY_{j}$ of $B_{n}$ is $G$- invariant.

iii) If $v\in V$, $v\neq 0$ and $\sigma (v)=v$, then $\sigma =1.$ Then:
writing $R=B_{n}\ast G$ and $T=eB_{n}\ast Ge\cong B_{n}^{G}$ and considering
both as positively graded algebras, denoting by \textsl{S}$_{R}$ and \textsl{%
S}$_{T}$ the categories of finite dimensional graded $R$ and $T$-modules,
respectively and by $g_{R}$, $gr_{T}$, the categories of finitely generated, 
$R$ and $T$ -modules, respectively and by $tails_{R}$ and $tails_{T}$ the
quotient categories $gr_{R}/$\textsl{S}$_{R}$ and $gr_{T}/$ \textsl{S}$_{T}$%
. Then there is a commutative diagram of categories and functors: $%
\begin{array}{ccc}
gr_{R} & \overset{Hom_{R}(\func{Re},-)}{\rightarrow } & gr_{T} \\ 
\downarrow \pi _{R} &  & \downarrow \pi _{T} \\ 
tails_{R} & \overset{H}{\rightarrow } & tails_{T}%
\end{array}%
$, with $H$ an equivalence.
\end{theorem}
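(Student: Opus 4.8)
The plan is to transport the three-step argument (dense, full, faithful) used in the graded equivalence already established for $C[X]$ and $C[X]^{G}$, replacing the polynomial data by the homogenized data $R=B_{n}\ast G$, $T=eRe\cong B_{n}^{G}$, and $R/ReR=B_{n}\ast G/B_{n}\ast Ge B_{n}\ast G$. First I would record that the ambient categories are legitimate: $R$ is noetherian, and by the Corollary following the finite-generation proposition $T\cong B_{n}^{G}$ is a noetherian positively graded $C$-algebra, so $gr_{R}$, $gr_{T}$, the Serre subcategories $S_{R}$, $S_{T}$ of finite dimensional graded modules, and the quotients $tails_{R}=gr_{R}/S_{R}$, $tails_{T}=gr_{T}/S_{T}$ all make sense. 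Since $Re$ is a graded projective left $R$-module, the functor $Hom_{R}(Re,-)=e(-)$ is exact, preserves degree-zero maps and finite generation, and carries $S_{R}$ into $S_{T}$; hence it descends to $H\colon tails_{R}\to tails_{T}$ and the square commutes on the nose.

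The descent rests on the two structural facts isolated for the polynomial case, now supplied in the homogenized setting. The crucial Proposition gives that $R/ReR$ is finite dimensional over $C$; therefore any finitely generated graded $M$ with $eM=0$ is a module over $R/ReR$ and so lies in $S_{R}$. The finite-generation Proposition (and its right-hand analogue) gives that $B_{n}$, hence $Re=B_{n}\ast Ge$, is finitely generated over $T\cong B_{n}^{G}$; therefore $Re\otimes_{T}L$ is finite dimensional for every finite dimensional graded $T$-module $L$, so the left adjoint $Re\otimes_{T}-$ carries $S_{T}$ into $S_{R}$. These two facts combine in the key normalization: for finitely generated graded $M$ the multiplication map $\mu\colon Re\otimes_{T}eM\to M$ has $e\mu$ an isomorphism (because $e(Re\otimes_{T}eM)=eRe\otimes_{T}eM\cong eM$), whence $\ker\mu$ and the cokernel $M/ReM$ are killed by $e$ and finitely generated, so both belong to $S_{R}$ and $\mu$ becomes an isomorphism in $tails_{R}$; likewise for the target.

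Granting this, density of $H$ is immediate from density of $e(-)$, via $\pi_{T}(eM)=H(\pi_{R}M)$. Fullness is the roof computation of the polynomial case: a morphism $H(X)\to H(Y)$ is a roof whose denominator $s$ has kernel and cokernel in $S_{T}$; applying $Re\otimes_{T}-$ and using that $Re\otimes_{T}(\ker s)$ and $Re\otimes_{T}(\operatorname{coker}s)$ are finite dimensional, the two commutative exact diagrams of that proof show $Re\otimes_{T}s$ has kernel and cokernel in $S_{R}$, and composing with the isomorphisms $\mu,\mu'$ yields a preimage roof that $e(-)$ sends back to the given morphism. Faithfulness is the dual diagram chase: if a roof represents a morphism with $H(\cdot)=0$, then $ef$ factors through a finite dimensional $T$-module, so after tensoring with $Re$ the Lemma that a map factoring through a finite dimensional module vanishes in $tails_{R}$ forces the morphism to be zero.

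The main obstacle I anticipate is bookkeeping rather than a new idea: one must verify that every module produced by the tensorings and by the kernel/cokernel formations in the two large exact diagrams stays graded and finitely generated, and that finite dimensionality passes correctly across $e(-)$ and $Re\otimes_{T}-$ in both directions. The single genuinely new point compared with $C[X]$ is that the central variable $Z$ must not spoil the finite dimensionality of $R/ReR$; this is exactly what the crucial Proposition secures, through $\bigcap_{c\in C}(Z-c)B_{n}=0$ and the simplicity of $A_{n}\ast G$ and of the finite dimensional quotients $S_{i}\ast G$. Once that Proposition is invoked, the grading adds nothing: $e(-)$ and $Re\otimes_{T}-$ are graded functors and $S_{R}$, $S_{T}$ are graded Serre subcategories, so the entire argument of the polynomial graded theorem transports without change.
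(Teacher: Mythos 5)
Your proposal is correct and matches the paper's intent exactly: the paper itself skips this proof, stating that it "follows by similar arguments to those used in the proof of Theorem 9 and Theorem 10" once one has the two key facts — that $B_{n}\ast G/B_{n}\ast GeB_{n}\ast G$ is finite dimensional over $C$ and that $B_{n}$ is finitely generated over $B_{n}^{G}$ — and these are precisely the ingredients you isolate and feed into the transported dense/full/faithful argument. Your additional remarks on the role of $\underset{c\in C}{\cap }(Z-c)B_{n}=0$ and on the graded bookkeeping are consistent with the paper's supporting Lemma and Propositions.
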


We also have as in Theorem 10:

Changing notation, from the graded version of the theorem we have an
equivalence of categories: $Qgr_{B_{n}\ast G}\cong Qgr_{B_{n}^{G}}.$ This
equivalence induces at the level of bounded derived equivalences: $\emph{D}%
^{b}($ $Qgr_{B_{n}\ast G})\cong \emph{D}^{b}(Qgr_{B_{n}^{G}}).$

As remarked above, $G$ acts as an automorphism group of $B_{n}^{!}$ , the
Yoneda algebra of $B_{n}$ and the Yoneda algebra of $B_{n}\ast G$ is $%
B_{n}^{!}$ $\ast G$.

The algebra $B_{n}^{!}$ $\ast G$ is Koszul selfinjective. By a theorem of $%
[MS$ and by $[MM]$ there is an equivalence of triangulated categories: 
\underline{$gr$}$_{B_{n}^{!}\ast G}\cong \emph{D}^{b}($ $Qgr_{B_{n}\ast G})$%
, where \underline{$gr$}$_{B_{n}^{!}\ast G}$ denotes the stable category of
finitely generated graded modules. Therefore: there is an equivalence of
triangulated categories: \underline{$gr$}$_{B_{n}^{!}\ast G}\cong \emph{D}%
^{b}(Qgr_{B_{n}^{G}})$.

We have proved the following:

\begin{corollary}
Let $B_{n}$ =C\TEXTsymbol{<}X$_{1}$,X$_{2}$,...X$_{n}$,Y$_{1}$,Y$_{2}$,...Y$%
_{n}$,Z\TEXTsymbol{>}/\{[X$_{i}$,X$_{j}$],[Y$_{i}$,Y$_{j}$],[X$_{i}$,Y$_{j}$%
]-$\delta _{ij}$Z$^{2}$,[X$_{i}$,Z],[Y$_{i}$,Z]\} be the homogenized Weyl
algebra in 2n+1 variables, $G$ a finite group of grade preserving
automorphisms of $B_{n}$ and $e$ the idempotent of $B_{n}\ast G$, $%
e=1/\left\vert G\right\vert \underset{\sigma \in G}{\sum }\sigma $.
Moreover, assume $B_{n}$ satisfies the following conditions:

i) For all $\sigma \in G$, $\sigma (Z)=Z.$

ii) The $C$-subspace $V=\overset{n}{\underset{i=1}{\oplus }}CX_{i}\oplus 
\overset{n}{\underset{j=1}{\oplus }}CY_{j}$ of $B_{n}$ is $G$- invariant.

iii) If $v\in V$, $v\neq 0$ and $\sigma (v)=v$, then $\sigma =1.$

Let $B_{n}^{!}$ be the Yoneda algebra of $B_{n}$. Then there are
isomorphisms of triangulated categories: \underline{$gr$}$_{B_{n}^{!}\ast
G}\cong \emph{D}^{b}(Qgr_{B_{n}^{G}})\cong \emph{D}^{b}($ $Qgr_{B_{n}\ast
G}).$In particular, the categories $\emph{D}^{b}(Qgr_{B_{n}^{G}})$ and $%
\emph{D}^{b}($ $Qgr_{B_{n}\ast G})$ have Auslander-Reiten triangles and they
are of the form $ZA_{n}.$
\end{corollary}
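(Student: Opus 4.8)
The plan is to observe that the two principal equivalences needed have already been assembled in the remarks preceding the statement, so the corollary will follow by concatenating them and then separately analysing the Auslander--Reiten structure. First I would record the right-hand isomorphism: the graded version of the previous theorem furnishes an equivalence of abelian categories $Qgr_{B_{n}\ast G}\cong Qgr_{B_{n}^{G}}$, realized by the functor $H$ induced from $Hom_{B_{n}\ast G}(B_{n}\ast Ge,-)$. Since an exact equivalence of abelian categories induces an equivalence of their bounded derived categories, this yields $\emph{D}^{b}(Qgr_{B_{n}\ast G})\cong \emph{D}^{b}(Qgr_{B_{n}^{G}})$ with no further work.

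For the left-hand isomorphism I would use Koszul duality. Since $B_{n}$ is Koszul with Yoneda algebra $B_{n}^{!}$, the group $G$ acts on $B_{n}^{!}$ and the Yoneda algebra of $B_{n}\ast G$ is $B_{n}^{!}\ast G$; moreover $B_{n}^{!}\ast G$ is Koszul and selfinjective, as established above (selfinjectivity of $B_{n}^{!}$ transfers to the skew group algebra by a Reiten--Riedtmann type argument as in $[RR]$). The theorem of $[MS]$ together with $[MM]$ --- a BGG/Koszul-duality statement identifying the stable module category of a selfinjective Koszul algebra with the bounded derived category of tails over its Koszul dual --- then applies to $\Lambda =B_{n}\ast G$ and $\Lambda ^{!}=B_{n}^{!}\ast G$, giving the triangle equivalence \underline{$gr$}$_{B_{n}^{!}\ast G}\cong \emph{D}^{b}(Qgr_{B_{n}\ast G})$. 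Composing the two steps produces the full chain \underline{$gr$}$_{B_{n}^{!}\ast G}\cong \emph{D}^{b}(Qgr_{B_{n}\ast G})\cong \emph{D}^{b}(Qgr_{B_{n}^{G}})$ asserted in the corollary.

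For the final assertion I would argue that, $B_{n}^{!}\ast G$ being selfinjective, its stable category \underline{$gr$}$_{B_{n}^{!}\ast G}$ is a triangulated category admitting Auslander--Reiten triangles, and then invoke $[MZ]$ to identify the shape of the stable Auslander--Reiten components of such a Koszul selfinjective algebra as $ZA_{n}$. Because Auslander--Reiten triangles are preserved under equivalences of triangulated categories, they transport along the chain above to $\emph{D}^{b}(Qgr_{B_{n}\ast G})$ and $\emph{D}^{b}(Qgr_{B_{n}^{G}})$, yielding the stated form. The only genuine obstacle lies upstream of this formal bookkeeping: one must verify that the hypotheses of the cited $[MS]$/$[MM]$ theorem are genuinely met, namely that $B_{n}^{!}$ is graded selfinjective and that forming the skew group algebra preserves both Koszulity and selfinjectivity. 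Granting these structural facts, together with the $Qgr$-equivalence supplied by the preceding graded theorem, the corollary is a purely formal consequence of stringing the cited equivalences together.
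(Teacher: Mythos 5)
Your proposal follows essentially the same route as the paper: the graded quotient-category equivalence $Qgr_{B_{n}\ast G}\cong Qgr_{B_{n}^{G}}$ from the preceding theorem passes to bounded derived categories, the Koszul duality theorem of [MS] and [MM] applied to the selfinjective Koszul algebra $B_{n}^{!}\ast G$ gives $\underline{gr}_{B_{n}^{!}\ast G}\cong \emph{D}^{b}(Qgr_{B_{n}\ast G})$, and the Auslander--Reiten structure of shape $ZA_{n}$ comes from [MZ] and is transported along the triangle equivalences. The paper's own proof is a one-line appeal to exactly these ingredients, so your more detailed write-up (including the explicit caveat about verifying selfinjectivity and Koszulity of the skew group algebra) is a faithful expansion of the intended argument.
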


\begin{proof}
As before, the proof uses the fact $B_{n}^{!}\ast G$ is selfinjective Koszul
and results from [MZ] .
\end{proof}

\begin{center}
{\LARGE References}

\bigskip
\end{center}

[AR] Auslander, M.; Reiten, I.; McKay quivers and extended Dynkin diagrams.
Trans. Amer. Math. Soc. 293 (1986), no. 1, 293--301

[Be] Benson D,; Polynomial Invariants of Finite Groups. London Mathematical
Society Lecture Notes Series 190, Cambridge University Press, 1993.

[CR] Curtis Ch. W. Reiner I. Representation Theory of Finite Groups and
Associative Algebras, Pure and Applied Mathematics Vol XI, Interscience 1962

[C-BH] Crawley-Boevey, W.; Holland, M. P. Noncommutative deformations of
Kleinian singularities. Duke Math. J. 92 (1998), no. 3, 605--635.

[C-B] Crawley-Boevey W. DMV\ Lectures on Representations of quivers,
preprojective algebras and deformation of quotient singularities.(preprint
Leeds)

[Co] Coutinho S.C. A Primer of Algebraic D-modules, London Mathematical
Society, Students Texts 33, 1995

[Ga] Gabriel, Pierre Des cat\'{e}gories ab\'{e}liennes. Bull. Soc. Math.
France 90 1962 323--448.

[GH]\ Green, E.; Huang, R. Q. Projective resolutions of straightening closed
algebras generated by minors. Adv. Math. 110 (1995), no. 2, 314--333.

[GM1] Green, E. L.; Mart\'{\i}nez Villa, R.; Koszul and Yoneda algebras.
Representation theory of algebras (Cocoyoc, 1994), 247--297, CMS Conf.
Proc., 18, Amer. Math. Soc., Providence, RI, 1996.

[GM2] Green, E. L.; Mart\'{\i}nez-Villa, R.; Koszul and Yoneda algebras. II.
Algebras and modules, II (Geiranger, 1996), 227--244, CMS Conf. Proc., 24,
Amer. Math. Soc., Providence, RI, 1998.

[GMT]\ Guo, Jin Yun; Mart\'{\i}nez-Villa, R.; Takane, M.; Koszul generalized
Auslander regular algebras. Algebras and modules, II (Geiranger, 1996),
263--283, CMS Conf. Proc., 24, Amer. Math. Soc., Providence, RI, 1998

[GuM] Guo, Jin Yun; Mart\'{\i}nez-Villa, R.; Algebra pairs associated to
McKay quivers. Comm. Algebra 30 (2002), no. 2, 1017--1032.

[Ku] Kunz E.; Introduction to Commutative Algebra and Algebraic Geometry,
Birkh\"{a}user, 1985

[Le] Lenzing H.;, Polyhedral groups and the geometric study of tame
hereditary algebras (preprint Paderborn).

[Li] Li Huishi. Noncommutative Groebner Bases and Filtered-Graded Transfer,
Lecture Notes in Mathematics 1795, Springer, 2002

[MM] Mart\'{\i}nez-Villa, R., Martsinkovsky; A. Stable Projective Homotopy
Theory of Modules, Tails, and Koszul Duality, (aceptado 11 de septiembre
2009), Comm. Algebra 38 (2010), no. 10, 3941--3973.

[MS] Mart\'{\i}nez Villa, Roberto; Saor\'{\i}n, M.; Koszul equivalences and
dualities. Pacific J. Math. 214 (2004), no. 2, 359--378.

[MZ] Mart\'{\i}nez-Villa, Roberto; Zacharia, Dan; Approximations with
modules having linear resolutions. J. Algebra 266 (2003), no. 2, 671--697.

[MV1] Mart\'{\i}nez-Villa, R.; Skew group algebras and their Yoneda
algebras. Math. J. Okayama Univ. 43 (2001), 1--16.

[MV2] Mart\'{\i}nez-Villa, R.; Serre duality for generalized Auslander
regular algebras. Trends in the representation theory of finite-dimensional
algebras (Seattle, WA, 1997), 237--263, Contemp. Math., 229, Amer. Math.
Soc., Providence, RI, 1998.

[MV3] Mart\'{\i}nez-Villa, R.; Applications of Koszul algebras: the
preprojective algebra. Representation theory of algebras (Cocoyoc, 1994),
487--504, CMS Conf. Proc., 18, Amer. Math. Soc., Providence, RI, 1996.

[MV4] Martinez-Villa, R. Graded, Selfinjective, and Koszul Algebras, J.
Algebra 215, 34-72 1999

[MMo] Martinez-Villa, R. Mondragon J. On the homogeneized Weyl Algebra.
(preprint 2011).

[Mi] Miyachi, Jun-Ichi; Derived Categories with Applications to
Representations of Algebras, Chiba Lectures, 2002.

[Mu] M\"{u}ller W. Darstellungstheorie von eindlichen Gruppen, Teubner
Studienb\"{u}cher, 1980

[Mc] McKay J.;, Graphs, singularities and finite groups, Proc. Sympos. Pure
Math., vol. 37, Amer. Math. Soc., Providence, R. I., 1980, pp. 183-186.

[P] Popescu N.; Abelian Categories with Applications to Rings and Modules;,
L.M.S. Monographs 3, Academic Press 1973.

[RR] Reiten, I.; Riedtmann, Ch. Skew group algebras in the representation
theory of Artin algebras. J. Algebra 92 (1985), no. 1, 224--282.

[Sm] Smith, P.S.; Some finite dimensional algebras related to elliptic
curves, Rep. Theory of Algebras and Related Topics, CMS Conference
Proceedings, Vol. 19, 315-348, Amer. Math. Soc 1996.

[Ste] Steinberg R.,; Finite subgroups of SU$_{2}$, Dynkin Diagrams and
Affine Coxeter elements. Pacific Journal of Mathematics, Vol. 118, No. 2,
1985.

[Stu] Sturmfels B.; Algorithms in Invariant Theory, Texts and Monographs in
Symbolic Computation, Springer-Verlag, 1993

\end{document}